\newtheorem{theorem}{Theorem}
\newtheorem{lemma}{Lemma}
\newtheorem{cor}{Corollary}
\newtheorem{definition}{Definition}
\newtheorem*{remark}{Remark}
\newcommand*\pFqskip{8mu}
\newcommand*\pFq{\begingroup
\catcode`\,\active
\def ,{\mskip\pFqskip\relax}%
\dopFq
}
\def\dopFq#1#2#3#4#5{%
{}_{#1}F_{#2}\biggl[\genfrac..{0pt}{}{#3}{#4};#5\biggr]%
\endgroup
}
\newcommand\extrafootertext[1]{%
\bgroup
\renewcommand\thefootnote{\fnsymbol{footnote}}%
\renewcommand\thempfootnote{\fnsymbol{mpfootnote}}%
\footnotetext[0]{#1}%
\egroup
}
\begin{document}
\title{\bf Change of Basis from\\
Bernstein to Zernike}
\markright{Change of Basis}
\author{D.A.\ Wolfram}
\affil{\small College of Engineering \& Computer Science\\The Australian National University, Canberra, ACT 0200\\
\medskip
{\rm David.Wolfram@anu.edu.au}
}
\date{}

\maketitle
%\newpage
\maketitle

\begin{abstract}
We increase the scope of previous work on change of basis between finite bases of polynomials by defining ascending and descending bases and introducing three techniques for defining them from known ones.

The minimum degrees of polynomials in an ascending basis can increase such as with bases of Bernstein and Zernike Radial polynomials. They have applications in computer-aided design and optics. 

We give coefficient functions for mappings from the monomials to descending bases of Bernstein polynomials, and ascending ones of Zernike Radial polynomials and prove their correctness. 

Allowing for parity, we define eight general change of basis matrices and the related equations for composing their coefficient functions.

A main example is the change of basis from shifted Legendre polynomials to Bernstein polynomials considered by R.\ Farouki~\cite{farouki2000}. The analysis enables us to find a more general hypergeometric function for the coefficient function, and recurrences for finding coefficient functions by matrix row and column. We show the column coefficient functions are equivalent to the Lagrange interpolation polynomials of their elements. We also provide a solution to an open problem~\cite{farouki2000} by showing there is no general closed form expression for the coefficient function from Gosper's Algorithm.

Truncation, alternation and superposition increase the scope further. Alternation enables, for example, a change of basis between Bernstein and Zernike Radial polynomials. A summary shows that the groupoid of change of basis matrices is a small category, and triangular and alternating change of basis matrices are morphisms in full subcategories of it. Truncation is a covariant functor.

\end{abstract}

\extrafootertext{MSC: Primary 15A03; Secondary  20N02, 33C45, 20L05}
%\begin{keywords}
% \LaTeX, typesetting
%\end{keywords}

%%%%%%%%%%%%%%%%%%%%%%
% % Here is the start of the Text
%%%%%%%%%%%%%%%%%%%%%%

\pagebreak
\tableofcontents

\section{Introduction and Motivation}

This paper concerns change of basis between finite bases of polynomials over $\mathbb{R}[x]$.
It increases the scope of my previous work on change of basis that mostly involved classical orthogonal polynomials~\cite{wolfram2107,wolfram2108}.

The increased scope includes Bernstein polynomials and Zernike Radial polynomials.
It is also increased by introducing techniques that produce polynomial bases from other polynomial bases.

\subsection{Bernstein and Zernike Radial Polynomials}

For every classical orthogonal polynomial, such as $T_n(x)$ or $L_n(x)$, the greatest degree is $n$ and the least degree implicitly is either $0$ or $1$. With bases of classical orthogonal polynomials, e.g., $\{T_0(x), T_1(x), \ldots, T_n(x)\}$, generally the greatest degree of each polynomial changes between polynomials and the basis can be ordered so that it decreases or descends to $0$ or to $1$.

Unlike the classical orthogonal polynomials, Bernstein polynomials and Zer\-nike Radial polynomials expressed using monomials have least degrees that can be greater than $1$ and are specified. For example, $R_5^3(x) = 5x^5 - 4x^3$. 

A key observation of Part I of this paper is that 
bases of Bernstein and Zernike Radial polynomials allow other possibilities. With them, the greatest degree can be fixed at $n \geq 0$ and the minimum degree can increase or ascend to $n$, e.g., $\{b^7_3(x), b^7_4(x), b^7_5(x), b^7_6(x), b^7_7(x)]$. This leads to the generalization of ascending and descending bases. We show that change of basis matrices between ascending bases are lower diagonal matrices and ones for descending bases are upper diagonal matrices. We show in Theorem~\ref{tri-inv}, that the set of change of basis matrices where the domain and range bases are all ascending or all descending bases forms a connected groupoid of lower or upper triangular change of basis matrices, respectively.

Bernstein polynomials $b_k^n(x)$ form a polynomial basis that are used in defining a B{\'e}zier curve:
\[
{\bf r}(t) = \sum\limits_{k=0}^{n} {\bf p}_k b_k^n(t)
\] where $t \in [0,1]$, e.g., \cite[equation (9)]{farouki}. This is a parametric vector equation where the evaluation of the Bernstein polynomials gives scalar coefficients of the $n+1$ two-dimensional control points ${\bf p}_k$.
The B{\'e}zier curve can be evaluated by using De Casteljau's algorithm~\cite{farouki} that was developed in 1959 and used for computer-aided design of car bodies. These techniques are now widely used in computer graphics and computer-aided design software~\cite{farouki,li}. 

Change of basis with Bernstein polynomials enables functions or curves expressed with other bases to be represented as B{\'e}zier curves, and the converse. Some previous work has considered particular cases of change of basis involving Bernstein polynomials. These include shifted Legendre polynomials~\cite{farouki2000,li}, Chebyshev polynomials of the first kind~\cite{li}, and shifted and generalised Chebyshev polynomials of the second kind~\cite{alqudah}.

Zernike polynomials were defined by Frits Zernike in 1934. They are orthogonal polynomials on the unit disk and have two independent variables: a radius and an azimuthal angle, e.g.,~\cite{bhatia}, \cite[\S 4]{boyd2}, \cite{lak}. They are used to measure and describe optical wavefront aberrations such as defocus, astigmatism, and coma, and have applications in optometry, telescope mirror alignment such as in the James Webb Space Telescope~\cite{howard}, and adaptive optics~\cite{hampson}. Adaptive optics is used, for example, in large terrestrial telescopes such as at the Keck Observatory~\cite{mawet} to reduce wavefront aberrations caused by atmospheric turbulence, and de-twinkle stars. It enabled the study of planets that orbit stars outside the solar system. An example is $\beta$ Pictoris b by the European Southern Observatory's Very Large Telescope in Chile~\cite{lagrange} using its Nasmyth Adaptive Optics System (NAOS).

A wavefront can be modelled with Zernike polynomials. It has the form of a sum terms each of which has 
a polynomial factor called a Zernike Radial polynomial that uses the radius as an independent variable.
Boyd and Yu~\cite{boyd2} considered four variants of Chebyshev polynomials of the first kind as alternatives to Zernike Radial polynomials.

For example, change of basis enables a radial polynomial to be expressed with Chebyshev polynomials of the first kind or Zernike Radial polynomials:
\[
\frac74 T_8(r) + \frac{73}{32} T_6(r) - \frac5{16} T_4(r) + \frac{39}{32} T_2(r) + \frac1{16} T_0(x) = 4 R_8^2(r) - 2 R_4^0(r) + 3R_6^2(r).
\]

In this paper, we consider change of basis between Zernike Radial polynomials and other polynomial bases in general, but do not compare their properties. Advantages of using two equivalent formalizations of a wavefront function are that in solving partial differential equations it can be preferable computationally to use a basis other than Zernike Radial polynomials, e.g., for faster convergence~\cite{boyd2}. A formalization using Zernike Radial polynomials is also useful because it enables standard and comparable measurements especially in optometry and opthalmology, and widely-understood aberrations are associated with orthogonal terms~\cite{vsia}. 

Additionally, deformable mirror actuator software in adaptive optics can use Zernike polynomial coefficients as inputs. Li and Jiang \cite[\S 4.1]{liSPIE}  define this use of Zernike coefficients. 
Fern{\'a}ndez and Artal~\cite{fernandez} used the coefficients to control an OKO Technologies deformable mirror, and Zhang et. al.~\cite[\S 2.2]{zhang} used a Boston Micromachines Corporation deformable mirror with 140 mirror actuators.
If processing of data from wavefront sensors uses a different polynomial basis, then a change of basis would be required to find the Zernike coefficients.

\subsection{Coefficient Functions and Polynomial Bases}

We consider a general formalization of change of basis with finite polynomial bases. 
This builds on previous work by Wolfram~\cite{wolfram2107,wolfram2108} where an intermediate or exchange basis of monomials is used. This enables $n^2$ change of basis combinations to be simplified to $2n$ changes of bases that can then be composed.

In part I, we consider four general kinds of products of change of basis matrices. They occur when all bases are descending bases, or ascending bases. They can have definite parity or non-definite parity. Coefficient functions~\cite{wolfram2107} are algebraic expressions which evaluate to connection coefficients for changes of basis.

These definitions enable us to prove the correctness of new coefficient functions that are for descending bases of Bernstein polynomials in the proof of Theorem~\ref{invdesa}, and ascending bases of Zernike Radial polynomials in the proof of Theorem~\ref{beta-RM}.

In Part II, we use these cases to find four others, when one basis is ascending and the other is descending. They can be formed by composition of coefficient functions, and we provide equations for the eight cases. A summary of them is given in Table~\ref{CoBM}.

As a main example, we apply this to the analysis of an open problem of Farouki~\cite{farouki2000} of whether there is a closed form expression for the coefficient function for the change of basis from shifted Legendre polynomials to Bernstein polynomials. We show that this change of basis matrix is from a descending basis to an ascending basis without definite parity. This leads to expressing the coefficient function as an indefinite hypergeometric sum. Gosper's Algorithm does not find a  general closed form expression for the sum. However, from Zeilberger's algorithm, we can find coefficient functions for any specific row or column of the change of basis matrix.

The second way the scope has increased is presented in Part III. This introduces techniques to produce new polynomial bases from others. The new bases can be used in mappings with polynomial bases whose minimum degrees can be greater than $1$,

These techniques are truncation, alternation and superposition of polynomials. Change of basis matrices and their inverses and associated coefficient functions are used to analyse these techniques.

Truncation is a technique that can be used to form ascending bases from classical orthogonal polynomials by subtracting terms that have degree less than an integer $l$ or greater than an integer $u$. For example, $T_7(x)_{5, 3} = -112 x^5 + 56x^3$ where $l=3$ and $u=5$, and the terms $64 x^7$ and $-7x$ have been subtracted from $T_7(x)$. We formalise how truncation can be extended to change of basis matrices. 

A main result of this is Theorem~\ref{local} that matrix inversion and truncation of diagonal change of basis matrices is commutative. In practical terms, it shows that truncation should be done before inversion to reduce complexity.

Alternation produces bases without definite parity from ones that have definite parity. This example takes 
\[\{U_0(x), U_2(x), U_4(x), U_6(x)\} \: \mbox{ and } \: \{U_1(x), U_3(x), U_5(x), U_7(x)\}
\] to form their union which spans the same vector space as $\{1, x, x^2, \ldots, x^7\}$. 

From Theorem~\ref{inv-alt}, we show that the inverse of a diagonal change of basis matrices of an alternating basis is the same matrix but with the coefficient function application replaced by its inverse. This enables the use of the inverse coefficient function to find the elements of the inverse matrix.

We show in Theorem~\ref{conv-supalt} that superposition and alternation are converses. A basis formed by superposition has polynomials that are the sum of pairs of polynomials in an alternating basis, e.g. 
$\{U_0(x), U_1(x)+U_0(x), U_2(x)+U_1(x), \ldots , U_7(x) + U_6(x)\}$. These are Chebyshev polynomials of the fourth kind $V_n(x)$ where $0\leq n \leq 7$ and they do not have definite parity. 

Truncation can also be applied to polynomials in bases formed from alternation or superposition.

\part{Ascending and Descending Representations}

Theorem 1 of Wolfram~\cite{wolfram2107}, gives conditions for composing finite change of basis matrices. In general,
\begin{equation} \label{mtv}
M_{tv} = M_{ts} M_{sv}
\end{equation} where $s$, $t$ and $v$ are bases of the same vector space, and the matrices are change of basis matrices.

The exchange basis is $s$. We choose this to be either $\{x^m, x^{m+1}, \ldots, x^n\}$, or $\{x^m, x^{m+2}, \ldots, x^n\}$ where $0 \leq m \leq n$ and $m$ and $n$ have the same parity.

\begin{definition}
A basis polynomial is a polynomial over $\mathbb{C}[x]$ of the form $f_n(x)$ where $n \geq 0$ such that its degree in $x$ is $n$.
\end{definition}

The Bernstein polynomials, Zernike Radial polynomials, monomials, and all classical orthogonal polynomials are basis polynomials.
We are concerned with defining basis polynomials for $v$ and $t$ that can be used in equation~(\ref{mtv}) above with the exchange basis $s$.

Basis polynomials such as classical orthogonal polynomials~\cite{dlmf} implicitly have minimum degree of $0$ or $1$~\cite{dlmf}. A finite basis formed from them, such as $\{T_0(x), T_1(x), \ldots, T_N(x)\}$ comprises polynomials with a fixed minimum degree of $0$ or $1$, and maximum degree of $N$. The polynomials can be ordered by descending degree to the minimum of $0$. An example is the representation of a function expressed using Chebyshev polynomials of the first kind, i.e.,
\[
p_N(x) = \sum\limits_{n=0}^N c_n^{\mbox\it cheb} T_n(x) 
\] where $x \in [0,1]$ from Hale and Townsend~\cite[equation (1.1)]{hale}.

If the basis polynomials have a variable minimum degree $m$ where $0 \leq m \leq n$ and $n$ is the maximum degree, then we can define polynomial bases of them where the maximum degree is fixed at $n$ and the minimum degree ascends to it.

For example, suppose that $p(x) = 16 x^7 -12 x^5 + 5 x^4 + 3 x^2$.
This polynomial has an ascending representation in Bernstein polynomials of
\[
p(x) = \frac{1}7b_{2}^7(x) + \frac{3}7 b_{3}^7(x) + b_{4}^7(x) + \frac{11}7 b_{5}^7(x) + \frac{6}7 b_{6}^7(x) + 12 b_{7}^7(x).
\]

Its descending representation is
\[
p(x) = -\frac{16}{21} b_{2}^7(x) + \frac{16}3 b_{2}^6(x) -\frac{74}5 b_{2}^5(x) + \frac{43}2 b_{2}^4(x) -18 b_{2}^3(x) + 12 b_{2}^2(x).
\]

The ascending and descending representations of $p(x)$ in Zernike Radial polynomials is simplified by expressing $p(x)$ by $(16 x^7 - 12 x^5) +(5 x^4 + 3 x^2)$ where the two parts have definite parity.
The ascending representation is
\[
p(x) = (2R_7^5(x) + 2R_7^7(x)) + (-R_4^2(x) + 9 R_4^4(x)).
\]

The descending representation is
\[
p(x) = (\frac{16}7 R_7^5(x) + \frac{12}7 R_5^5(x)) + (\frac{5}4 R_4^2(x) + \frac{27}4 R_2^2(x)).
\]

These can be combined, e.g.,
\[
p(x) = (2R_7^5(x) + 2R_7^7(x)) + (\frac{5}4 R_4^2(x) + \frac{27}4 R_2^2(x)).
\]

Parity is a separate attribute of basis polynomials. Six of the fifteen classical orthogonal polynomials~\cite{wolfram2108} have definite parity, e.g., Gegenbauer polynomials and Chebyshev polynomials of the first and second kinds. The Zernike Radial polynomials also have definite parity. For each of them, the minimum and maximum degree have the same parity, and when expressed using monomials, the smallest difference of degree between distinct terms is $2$.
For example
\begin{align*}
U_6(x) =& 64 x^6 - 80 x^4 + 24 x^2 -1\\
R_7^3(x) = & 21 x^7 - 30 x^5 + 10 x^3.
\end{align*}

The other classical orthogonal polynomials and the Bernstein polynomials do not have definite parity in general. For each of them, the minimum and maximum degree can have different parities, and when expressed using monomials, the smallest difference of degree between distinct terms is $1$:
\begin{align*}
L_4(x) =& \frac1{24}( x^4 - 16 x^3 + 72 x^2 - 96x + 24)\\
V_5(x) =& 32 x^5 - 16 x^4 - 32 x^3 + 12 x^2 + 6x -1\\
b_2^5(x) =& -10 x^5 + 30 x^4 - 30 x^3 + 10 x^2.
\end{align*}

\section{Ascending and Descending Bases }
Truncation subtracts from a polynomial the terms that have degrees higher or lower than particular values. For example,
\begin{align*}
U_6(x)_{8,4} =& 64 x^6 - 80 x^4.\\
L_4(x)_{3,1} =& \frac1{24}(- 16 x^3 + 72 x^2 - 96x)\\
b_2^5(x)_{4,3} =& 30 x^4 - 30 x^3.
\end{align*}

Truncation enables ascending and descending bases to be defined from any polynomial basis. 

Suppose $s = \{x^4, x^5, x^6, x^7, x^8\}$ so that $m = 4$ and $n=8$. 
If the source or target basis of a mapping is truncated Laguerre polynomials, it can be $\{L_4(x)_{4, 4}, L_5(x)_{5,5}, L_6(x)_{6, 4}, L_7(x)_{7,5}, L_8(x)_{8, 4}\}$. For example, 
\[
L_6(x)_{6, 4} = \frac1{720}(x^6 -36 x^5 +450 x^4).
\] If the source or target of the mapping is truncated Chebyshev polynomials of the first kind, it can be
\[
\{T_4(x)_{4, 4}, T_5(x)_{5,5}, T_6(x)_{6, 4}, T_7(x)_{7,5}, T_8(x)_{8, 4}\}.
\]
These polynomials omit terms of degree less than $4$. For example, $T_6(x)_{6, 4} = 32 x^6 - 48x^4$ and $T_7(x)_{7, 5} = 64x^7 - 112 x^5$.

The following definition formalises truncation. 

\begin{definition} \label{parcoeff}
Suppose that $f(x)$ is a polynomial over $\mathbb{R}[x]$ of degree $n$ and minimum degree $m$ where $0 \leq m \leq n$. If $f$ has definite parity, then expressed using the monomials it has the form
\begin{equation}
f(x) = \sum\limits_{k=0}^{\frac{n-m}2} \beta(n, m, k) x^{n-2k}.
\end{equation}

If $f$ does not have definite parity, then expressed using the monomials it has the form
\begin{equation}
f(x) = \sum\limits_{k=0}^{n-m} \alpha(n, m, k) x^{n-k}.
\end{equation}

The functions $\alpha$ and $\beta$ are coefficient functions~\cite{wolfram2107,wolfram2108}.

\smallskip

The notation $f(x)_{u,l}$ is defined where $u \geq l \geq 0$, and if $f(x)$ has definite parity then $u$, $l$ and $n$ have the same parity. Let $n' = \min \{u, n\}$ and $m' = \max \{l, m\}$. Then if $f(x)$ has definite parity,

\begin{equation}
f(x)_{u,l} = \sum\limits_{k=0}^{\frac{n'-m'}2} \beta(n, m, \frac{n-n'}2 + k) x^{n'-2k}.
\end{equation}

If $f$ does not have definite parity, then
\begin{equation} \label{NDtruncalpha}
f(x)_{u,l} = \sum\limits_{k=0}^{n' -m'} \alpha(n, m, n-n' + k) x^{n'-k}.
\end{equation}

\end{definition}

\begin{remark}\rm
In Definition~\ref{parcoeff}, if $n' < m'$ then $[l, u] \cap [m,n] = \emptyset$ and $f(x)_{u, l}$ is nothing.
\end{remark}

We now consider polynomial bases and change of basis matrices. The following definition enables the degree and minimum degree of a polynomial to be specified.

\begin{definition} \label{desasc}
Let $d$ be either $1$ or $2$. 
A polynomial basis over $\mathbb{R}[x]$ is a descending basis if it has the form
\[
\{f(x)_{m,m}, f(x)_{m+d,m}, f(x)_{m+2d, m}, \ldots, f_{n, m}(x)\}
\] where for each polynomial of the form $f(x)_{u,l}$ in this basis, its degree is $u$.
\medskip

A polynomial basis over $\mathbb{R}[x]$ is an ascending basis if it has the form
\[
\{f(x)_{n,m}, f(x)_{n,m+1}, \ldots , f(x)_{n,n-2d}, f(x)_{n,n-d}, f(x)_{n, n}\}.
\] where for each polynomial of the form $f(x)_{u,l}$ in this basis, its minimum degree is $l$.

\end{definition}

\bigskip

These bases span the same vector space as the following bases:
\begin{itemize}
\item If $d=1$, $\{x^m, x^{m+1}, \ldots, x^n\}$ where $0 \leq m \leq n$, or
\item If $d=2$, $\{x^m, x^{m+2}, \ldots, x^n\}$ where $0 \leq m \leq n$ and $m$ and $n$ have the same parity.
\end{itemize} 
These bases of monomials are both ascending and descending bases. An example of a descending basis with definite parity is $\{T_3(x)_{3,1}, T_5(x)_{5,1}, T_7(x)_{7,1}\}$ which has $n =7$, $m= 1$ and it spans the same vector space as $\{x, x^3, x^5, x^7\}$. An example of an ascending basis without definite parity is 
\[
\{b^{5}_2(x)_{7,2}, b^5_3(x)_{7,3}, b^6_4(x)_{7,4}, b^6_5(x)_{7,5}, b^7_6(x)_{7,6}, b^7_7(x)_{7,7}\}
\] where $n=7$, $m=2$ and $d=1$.

\section{Change of Basis Matrices}
\label{CoBS}

Let $t$ and $v$ be both descending or ascending polynomial basis that span the same vector space. We are concerned with the form of the change of basis matrix $M_{vt}$ and its inverse. From Wolfram~\cite[\S 3]{wolfram2107}, if $M_{vt}$ is a change of basis matrix, its columns of are the transposed coordinate vectors with respect to $v$ of the basis vectors of $t$. These coordinates are given by the evaluations of the coefficient functions $\alpha$ and $\beta$ of Definition~\ref{parcoeff}.

If the bases $t$ and $v$ do not have definite parity and they are descending bases and the polynomials have maximum degree of $n$ and minimum degree of $m$, then the change of basis matrix $M_{vt}$ has the following form.

\begin{figure}[ht]
\resizebox{\textwidth}{!}{%
$\displaystyle
\left[
\begin{array}{rrrrrr}
\alpha(m,m, 0) & \cdots & \alpha(n-3,m, n-m-3) & \alpha_!(n-2,m, n-m-2) & \alpha(n-1, m, n-m-1) & \alpha(n,m, n-m)\\
\vdots & \vdots & \vdots & \vdots & \vdots & \vdots \\
0 & \cdots & \alpha(n-3, m, 0) & \alpha(n-2, m, 1) & \alpha(n-1, m, 2)& \alpha(n ,m, 3)\\
0 &\cdots & 0 & \alpha(n-2, m, 0) & \alpha(n-1, m, 1)& \alpha(n, m, 2)\\
0 & \cdots &0 & 0 & \alpha(n-1, m, 0)& \alpha(n, m, 1)\\
0 & \cdots &0 & 0 & 0 & \alpha(n, m, 0)
\end{array} 
\right]
$}
\caption{$M_{vt}$ for a Descending Basis Without Definite Parity} \label{MDesNP}
\end{figure}
\medskip

This matrix has elements
\begin{equation}
m_{i,j} = 
\left\{
\begin{array}{ll} 
\alpha(m+j, m, j-i ) & \mbox{if $j \geq i$}\\
0 & \mbox{if $j< i$}
\end{array}
\right. \label{mDND}
\end{equation} where $0 \leq i,j \leq n-m$.

For Laguerre polynomials, $m=0$ and the coefficient function for the matrix $M_{LM}$ for the mapping from the monomials is given by
\[
\alpha(n, m, k) = (-n)_{n-k} (n -k+1)_k
\] from \cite[equation 18.18.19]{dlmf}, where $0 \leq k \leq n$. When $n=3$ and $m=1$, this gives

\begin{center}
\begin{doublespace}
\noindent\(\left[
\begin{array}{ccc}
-1 & -4 & -18 \\
\phantom{-}0 &\phantom{-} 2 & \phantom{-}18 \\
\phantom{-}0 & \phantom{-}0 & -6 \\
\end{array}
\right]\)
\end{doublespace}
\end{center} From the third column, we have
\begin{align*}
x^3 =& -6 L_3(x)_{3,1} +18 L_2(x)_{2,1} - 18 L_1(x)_{1,1}\\
=& (x^3 - 9x^2 + 18 x) +(9x^2 - 36 x) + 18 x.
\end{align*}

\medskip

If $v$ and $t$ are ascending bases, then $M_{vt}$ has the form

\medskip
\begin{figure}[ht]
\resizebox{\textwidth}{!}{%
$\displaystyle
\left[
\begin{array}{rrrrrr}
\alpha(n, m,n-m) & \cdots & 0 & 0 & 0 & 0\\
\vdots & \vdots & \vdots & \vdots & \vdots & \vdots \\
\alpha(n,m,3)& \cdots & \alpha(n, n-3, 3) & 0 & 0& 0\\
\alpha(n,m,2) &\cdots & \alpha(n, n-3, 2) & \alpha(n,n-2, 2) & 0& 0\\
\alpha(n,m,1) & \cdots &\alpha(n,n-3,1)& \alpha(n,n-2,1) & \alpha(n, n-1, 1)& 0\\
\alpha(n,m,0) & \cdots &\alpha(n,n-3, 0) & \alpha(n,n-2, 0) & \alpha(n,n-1, 0) & \alpha(n, n, 0)
\end{array} 
\right]
$}
\caption{$M_{vt}$ for an Ascending Basis Without Definite Parity} \label{MDAscNP}
\end{figure}

\medskip

Its elements are
\begin{equation}
m_{i,j} = 
\left\{
\begin{array}{ll} 
\alpha(n, m+j, n-m-i ) & \mbox{if $i \geq j$}\\
0 & \mbox{if $j> i$}
\end{array}
\right. \label{mAND}
\end{equation} where $0 \leq i,j \leq n-m$.

An example of this is the change of basis matrix for the mapping from the Bernstein polynomials $\{b_3^7(x), b_4^7(x), b_5^7(x), b_6^7(x), b_7^7(x)\}$ to the monomials $\{x^3, x^4, x^5, x^6, x^7\}$ where $n=7$ and $m=3$:

\begin{center}
\begin{doublespace}
\noindent\(\left[
\begin{array}{ccccc}
\phantom{00}35 & \phantom{-}0 & \phantom{-}0 & \phantom{-}0 & 0 \\
-140 & \phantom{-}35 & \phantom{-}0 & \phantom{-}0 & 0 \\
\phantom{-}210 & -105 & \phantom{-}21 & \phantom{-}0 & 0 \\
-140 & \phantom{-}105 & -42 & \phantom{-}7 & 0 \\
\phantom{00}35 & -35 & \phantom{-}21 & -7 & 1 \\
\end{array}
\right]\)
\end{doublespace}
\end{center}
The coefficient function $\alpha(n,m,k)$ is given in equation~(\ref{MB}).

\medskip
If the bases $v$ and $t$ have definite parity, then the change of basis matrix $M_{vt}$ is
\medskip

\begin{figure}[H]
\resizebox{\textwidth}{!}{%
$\displaystyle
\left[
\begin{array}{rrrrr}
\beta(m,m, 0) & \cdots & \beta(n-4,m, l -2) & \beta(n-2, m, l -1)& \beta(n,m, l)\\
0 & \ldots & \beta(n-4, m, l -1)& \beta(n-2, m, l -2) & \beta(n ,m, l-1)\\
\ldots & \ldots & \ldots & \ldots & \ldots \\
0 & \cdots & \beta(n-4, m, 1) & \beta(n-2, m, 2)& \beta(n ,m, 3)\\
0 &\cdots & \beta(n-4, m, 0) & \beta(n-2, m, 1)& \beta(n ,m, 2)\\
0 & \cdots & 0 & \beta(n-2, m, 0)& \beta(n ,m, 1) \\
0 & \cdots & 0 & 0 & \beta(n, m, 0)
\end{array} 
\right]
$} 
\caption{$M_{vt}$ for a Descending Basis With Definite Parity} \label{MDesP}
\end{figure}
\medskip 

This matrix has elements
\begin{equation}
m_{i,j} = 
\left\{
\begin{array}{ll} 
\beta(m+2j , m, j-i ) & \mbox{if $j \geq i$}\\
0 & \mbox{if $j< i$}
\end{array}
\right. \label{mDD}
\end{equation} where $0 \leq i,j \leq l$.

As an example, the change of basis matrix from Chebyshev polynomials of the first kind of even degree to the monomials where
$0 \leq n \leq 6$ is
\begin{gather*}
\begin{bmatrix}
1 & -1 & \phantom{-}1 & -1\\
\\
0 & \phantom{-}2& -8 & \phantom{-}18 \\
\\
0 & \phantom{-}0 & \phantom{-}8& -48 \\
\\
0& \phantom{-}0 & \phantom{-}0&\phantom{-}32
\end{bmatrix}
\end{gather*}
From the fourth column, we have
\[
T_6(x) = 32 x^6 - 48 x^4 + 18 x^2 - 1.
\]
\medskip

When $v$ and $t$ are ascending bases with definite parity and let $l = \frac{n-m}2$. The change of basis matrix $M_{vt}$ is

\begin{figure}[H]
\resizebox{\textwidth}{!}{%
$\displaystyle
\left[
\begin{array}{rrrrr}
\beta(n,m, l) & \cdots & 0 &0 & 0\\
\beta(n,m,l-1) & \ldots & 0& 0 & 0\\
\ldots & \ldots & \ldots & \ldots & \ldots \\
\beta(n,m,3) & \cdots & 0 & 0& 0\\
\beta(n,m,2) &\cdots & \beta(n, n-4, 2) & 0& 0\\
\beta(n,m,1) & \cdots & \beta(n, n-4, 1) & \beta(n, n-2, 1)& 0\\
\beta(n,m,0) & \cdots & \beta(n, n-4, 0) & \beta(n, n-2, 0) & \beta(n, n, 0)
\end{array} 
\right]
$} 
\caption{$M_{vt}$ for an Ascending Basis With Definite Parity} \label{MDAscP}
\end{figure}
\medskip

We have
\begin{equation}
m_{i,j} = 
\left\{
\begin{array}{ll} 
\beta(n, m+2j, l- i ) & \mbox{if $i \geq j$}\\
0 & \mbox{if $i < j$}
\end{array}
\right. \label{mAD}
\end{equation} where $0 \leq i,j \leq l$.

An example of a change of basis matrix in this form is for the mapping from the basis $t = \{x^3, x^5, x^7, x^9\}$ to the ascending basis of Zernike Radial polynomials $v= \{R_9^3(x), R_9^5(x), R_9^7(x), R_9^9(x)\}$ where $n=9$, $m=3$ and $l=3$. The change of basis matrix $M_{vt}$ is
\begin{center}
\begin{doublespace}
\noindent\(\left[
\begin{array}{cccc}
-\frac{1}{20} & \phantom{-}0 & \phantom{-}0 & 0 \\
\phantom{-}\frac{1}{4} & \phantom{-}\frac{1}{21} & \phantom{-}0 & 0 \\
-\frac{7}{10} & -\frac{1}{3} & -\frac{1}{8} & 0 \\
\phantom{-}\frac{3}{2} & \phantom{-}\frac{9}{7} & \phantom{-}\frac{9}{8} & 1 \\
\end{array}
\right]\)
\end{doublespace}
\end{center}
The coefficient function $\beta(n, m, k)$ is given in equation~(\ref{b4}).
From the second column, we have
\begin{align*}
x^5 =& \frac97 R_9^9(x) - \frac13 R_9^7(x) + \frac1{21}R_9^5(x)\\
=& \frac97 x^9 - \frac13 (9x^9 - 8x^7) + \frac1{21} (36 x^9 -56 x^7 +21 x^5).
\end{align*}

\begin{theorem}
The four kinds of change of basis matrices $M_{vt}$ are well-defined and they are invertible matrices.
\end{theorem}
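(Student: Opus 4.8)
The plan is to treat the two assertions—well-definedness and invertibility—separately, and to handle all four structural forms (equations~(\ref{mDND}), (\ref{mAND}), (\ref{mDD}) and (\ref{mAD})) in parallel, since they differ only in index bookkeeping. For well-definedness I would first invoke the spanning facts recorded after Definition~\ref{desasc}: an ascending or descending basis with $d=1$ spans $\{x^m,\ldots,x^n\}$ and one with $d=2$ spans $\{x^m,x^{m+2},\ldots,x^n\}$. Hence whenever $v$ and $t$ are both ascending or both descending of the same type, they are bases of one and the same finite-dimensional subspace of $\mathbb{R}[x]$. By the composition result quoted in equation~(\ref{mtv}), the change of basis matrix $M_{vt}$ then exists and is uniquely determined, its $j$-th column being the coordinate vector of the $j$-th basis polynomial of $t$ expressed in $v$. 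It remains to check that these coordinates are exactly the entries displayed in Figures~\ref{MDesNP}--\ref{MDAscP}, which reduces to reading off the coefficient of the appropriate monomial via Definition~\ref{parcoeff}.

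The genuine verification here is that every coefficient-function argument that appears is legal, i.e. that the third argument $k$ satisfies $0\le k\le n-m$ in the non-parity cases and $0\le k\le\frac{n-m}{2}$ in the parity cases. For instance in (\ref{mDND}) one has $k=j-i$ with $0\le i\le j\le n-m$, so $0\le k\le j\le n-m$; in (\ref{mAND}) one has $k=n-m-i$ with $j\le i\le n-m$, giving $0\le k\le n-m-j$, which is exactly the admissible range for $\alpha(n,m+j,\cdot)$; and the two parity cases yield $0\le k\le j$ and $0\le k\le l-j$ by the same computation. The off-diagonal zeros are forced because a basis polynomial of degree $u$ has no monomial above degree $u$ (descending) and none below its minimum degree $l$ (ascending), so the suppressed entries are genuinely $0$.

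For invertibility I would argue directly from the triangular shape. Each matrix is upper triangular in the descending cases and lower triangular in the ascending cases, so its determinant is the product of its diagonal entries and it suffices to show each diagonal entry is nonzero. Setting $i=j$ gives the diagonal entries $\alpha(m+j,m,0)$, $\alpha(n,m+j,n-m-j)$, $\beta(m+2j,m,0)$ and $\beta(n,m+2j,l-j)$ respectively. In the two descending cases the third argument is $0$, so the entry is the coefficient of the top monomial $x^{m+jd}$ in a basis polynomial of degree exactly $m+jd$; this leading coefficient is nonzero by the definition of a basis polynomial. In the two ascending cases a short computation identifies the entry as the coefficient of the bottom monomial $x^{m+jd}$ in a basis polynomial of minimum degree exactly $m+jd$—for (\ref{mAD}) one uses $n-2(l-j)=m+2j$ since $2l=n-m$—and this lowest-degree coefficient is likewise nonzero. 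Hence every diagonal entry is nonzero, the determinant is nonzero, and $M_{vt}$ is invertible.

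The step I expect to be the main obstacle is not the invertibility, which is essentially immediate once the triangular shape is recognized, but the uniform verification that the four entry formulas really do reproduce the coordinate vectors, with all coefficient-function arguments in range and all off-diagonal zeros accounted for. The care lies in checking the four index translations consistently across the ascending/descending and parity/non-parity conventions rather than in any single hard inequality; once these translations are in place, the identification of each diagonal entry as a nonzero leading or lowest-degree coefficient completes the argument.
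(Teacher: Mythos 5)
Your proposal is correct and follows essentially the same route as the paper's proof: both arguments establish that $M_{vt}$ is upper or lower triangular in the four cases and then show each diagonal entry is nonzero—you by identifying it with the leading (or lowest-degree) coefficient of a basis polynomial, the paper by noting that only one basis polynomial of $t$ shares the degree (or minimum degree) $m+jd$, so the degree match forces $m_{j,j}\not=0$—before concluding invertibility from the nonzero determinant. The extra index-range bookkeeping you supply for the entry formulas is a detail the paper leaves implicit, but it does not change the structure of the argument.
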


\begin{proof}
The change of basis matrices $M_{vt}$ are well-defined because from Definition~\ref{desasc}, if $v$ and $t$ are descending bases, 
then each polynomial in $v$ or $t$ has the form $h(x)_{u,l}$ where the degree $u$ is unique with respect to the other polynomials in $v$ or $t$. If $v$ and $t$ are ascending bases, then each polynomial in $v$ or $t$ has a minimum degree that is similarly unique. 

In all four cases, the change of basis matrix $M_{vt}$ is a triangular matrix: an upper triangular matrix when $v$ and $t$ are descending bases, and a lower triangular one when $v$ and $t$ are ascending bases. 

Let $d$ be either $1$ or $2$ depending on whether $v$ does not have definite parity, or has definite parity, respectively.
If $v$ is a descending basis, then the column $j$ where $0 \leq j \leq \frac{m-n}d$ is the transposed coordinate vector of $f(x)_{ m+jd, m}$ with respect to the basis $t$, i.e.,
\[
f(x)_{m + jd, m} = \sum\limits_{k=0}^{j} m_{j-k, j} g(x)_{m + (j-k)d, m}
\]
where $v = \{g(x)_{m,m}, g(x)_{m+d, m}, \ldots, g(x)_{n-d, m}, g(x)_{n,m}\}$. By Definition~\ref{desasc}, the only polynomial in $t$ in the sum above with degree $m+jd$ is $g(x)_{m+jd, m}$ when $k=0$. It follows that $m_{j,j} \not=0$ because $f(x)_{m + jd, m}$ also has degree $m + jd$.

If $v$ is an ascending basis, then the column $j$ where $0 \leq j \leq \frac{m-n}d$ is the transposed coordinate vector of $f_{n, m+jd}(x)$ with respect to the basis $t$. Similarly, the element $m_{j,j} \not= 0$. 

It follows that the elements of the main diagonals of these matrices are non-zero. Hence their determinants are non-zero, and so $M_{vt}$ is an invertible matrix.
\end{proof}

From Lemma 1 and Lemma 2 of Wolfram~\cite{wolfram2107}, we have $M_{vt}^{-1} = M_{tv}$.

The following theorem shows that upper or lower triangularity of a change of basis matrix is preserved by matrix inversion and multiplication.

\begin{theorem} \label{tri-inv}
Let $V$ be a finite dimensional vector space of polynomials over $\mathbb{R}[x]$ that spans $s = \{x^m, x^{m+d}, \ldots, x^{n-d}, x^n\}$, where $0 \leq m \leq n$, $d$ is either $1$ or $2$, and if $d=2$ then $n$ and $m$ have the same parity.

Let $S$ be a set of bases of $V$ that are either ascending or descending bases, and where there are $k_1$ ascending bases and $k_2$ descending bases.

The set of change of basis matrices $M_{vt}$ where $v,t \in S$ and both are ascending bases is a connected sub-groupoid of order $k_1^2$ of lower-triangular matrices. If $v,t \in S$ and both are descending bases, the set of change of basis matrices is s a connected sub-groupoid of order $k_2^2$ of upper-triangular matrices.
\end{theorem}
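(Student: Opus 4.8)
The plan is to present the stated collection as a category, verify that every morphism is invertible (so that it is a groupoid), and then read off connectedness and the order count from the fact that each hom-set is a singleton. I would take the objects to be the ascending bases in $S$ and, for each ordered pair $v,t$ of such bases, the single morphism $t\to v$ to be the change of basis matrix $M_{vt}$, with composition given by matrix multiplication. The preceding theorem already guarantees that each such $M_{vt}$ exists, is invertible, and is lower-triangular, so the only genuine work is to check the categorical axioms together with closure inside the ascending world.

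First I would establish composition. Equation~(\ref{mtv}) gives $M_{uv}M_{vt}=M_{ut}$ for any three bases $u,v,t$ of $V$; taking $u,v,t$ all ascending shows that the product of two morphisms in the collection is again the change of basis matrix between two ascending bases, hence lies in the collection and, by the preceding theorem, is again lower-triangular. Associativity is inherited from associativity of matrix multiplication, and the identity morphism at an object $v$ is $M_{vv}$, which is the identity matrix and is trivially lower-triangular. This already exhibits the collection as a category of lower-triangular matrices.

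Next I would verify invertibility. The relation $M_{vt}^{-1}=M_{tv}$, recorded just above from Lemma~1 and Lemma~2 of Wolfram~\cite{wolfram2107}, shows that the inverse of each morphism is again a morphism in the collection, namely the change of basis matrix between the same two ascending bases in the opposite order; closure under inversion therefore also keeps us among lower-triangular matrices. Hence every morphism is invertible and the category is a groupoid, and since the collection is closed under both composition and inversion inside the ambient groupoid of all change of basis matrices $M_{vt}$ with $v,t\in S$, it is a sub-groupoid of it.

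Finally I would read off connectedness and order. For any two ascending bases $v,t\in S$ the morphism $M_{vt}$ exists, so there is a morphism between every pair of objects and the groupoid is connected; moreover, since the change of basis matrix between two fixed bases is unique, each hom-set $\mathrm{Hom}(t,v)=\{M_{vt}\}$ is a singleton, so the vertex groups are trivial and the structure is the pair (indiscrete) groupoid on its $k_1$ objects, which has exactly $k_1^2$ morphisms. The descending case is identical word for word, replacing ``ascending'' by ``descending'', ``lower-triangular'' by ``upper-triangular'' (the preceding theorem supplies upper-triangularity there), and $k_1$ by $k_2$. The one point to get right, and the only place the structural input is really used, is closure: that a product or an inverse of these matrices is not merely triangular but is again a change of basis matrix between two ascending (respectively descending) bases. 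This is exactly what equation~(\ref{mtv}) and the inverse relation provide, so beyond this bookkeeping no technically hard step remains.
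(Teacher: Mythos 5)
Your proposal is correct, and its overall shape matches the paper's: both realize the collection as a sub-groupoid of the ambient change of basis groupoid on $S$ and invoke the established triangular form of ascending/descending change of basis matrices. The difference lies in how the two ingredients are supplied. The paper simply cites Wolfram's Theorem~1 for the ambient structure (connected groupoid of order $|S|^2$ under matrix product and inverse) and then disposes of closure by asserting that lower/upper triangularity is invariant under the two groupoid operations, i.e.\ by a matrix-algebra fact (products and inverses of triangular matrices are triangular), which it leaves as ``straightforward.'' You instead re-derive the ambient facts from first principles --- associativity and identities from matrix algebra, composition closure from equation~(\ref{mtv}), invertibility from $M_{vt}^{-1}=M_{tv}$, connectedness from non-emptiness of hom-sets, and the order $k_1^2$ (resp.\ $k_2^2$) from the observation that each hom-set is a singleton, making the structure the pair groupoid on its objects --- and you handle triangularity of composites and inverses not by the matrix-algebra lemma but by identifying $M_{uv}M_{vt}$ as the change of basis matrix $M_{ut}$ between two ascending bases and $M_{vt}^{-1}$ as $M_{tv}$, so that triangularity follows from the already-proved form of such matrices. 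What each buys: the paper's route is shorter given the external citation; yours is self-contained, avoids the separate triangularity-invariance claim entirely, and makes the count $k_1^2$ and the connectedness completely transparent. Both are sound.
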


\begin{proof}
Wolfram~\cite[Theorem 1]{wolfram2107} is that the set of change of basis matrices between $m$ bases that span the same vector space is a connected groupoid of order $m^2$ with the groupoid operations of matrix inversion and matrix multiplication of the form $M_{vr} M_{rt}$ where $r, t, v \in S$.

From equations (\ref{mDND}) -- (\ref{mAD}),
if $M_{vt}$ is a change of basis matrix between ascending or descending bases, it is a lower- or upper-triangular matrix, respectively. It is straightforward to show that the upper- or lower triangularity of the elements of these sub-groupoids are invariant under the two groupoid operations.
\end{proof}

\subsection{Change of Basis with Band Matrices}

The following property shows that the inverse of change of basis matrices formed from truncations of polynomials are band matrices. 
They are upper- or lower-band matrices with bandwidth of $1$.
This simplifies finding the coefficient function for changes of bases where the range is a basis formed truncations of a polynomial.

\begin{lemma} \label{truncband}
Let $f(x)$ be a polynomial over $\mathbb{Q}[x]$ with minimum degree that does not exceed $m$, and degree that is at least $n$ where $0 \leq m \leq n$.

The basis $t$ is either an ascending basis of the form
\[
t = \{f(x)_{n,m}, f(x)_{n,m+d}, \ldots , f(x)_{n,n-2d}, f(x)_{n,n-d}, f(x)_{n, n}\}
\]
or a descending basis of the form
\[
t = \{f(x)_{m,m}, f(x)_{m+d,m}, f(x)_{m+2d, m}, \ldots, f_{n, m}(x)\}
\] 
where either $d=1$ or $d=2$.

Let $M_{vt}$ be a change of basis matrix where $t$ and $v$ are both ascending or descending bases with elements $m_{i,j}$ where $0 \leq i, j \leq \frac{n-m}d$.

The change of basis matrix $M_{tv}$ is a lower band matrix if $t$ is an ascending basis, or an upper band matrix if $t$ is a descending basis. 

Let $M_{tv}$ have elements $m'_{i, j}$.
If $t$ is an ascending basis,
\[
m'_{i,j} = 
\left\{
\begin{array}{ll} 
\frac{1}{m_{i,j}} & \mbox{if $i = j$}\\
\\
-\frac{1}{m_{j, j}} & \mbox{if $i=j+1$}\\
\\
0 & \mbox{otherwise.}
\end{array}
\right.
\]

If $t$ is a descending basis,
\[
m'_{i,j} = 
\left\{
\begin{array}{ll} 
\frac{1}{m_{i,j}} & \mbox{if $i = j$}\\
\\
-\frac{1}{m_{j,j}} & \mbox{if $i= j-1$}\\
\\
0 & \mbox{otherwise.}
\end{array}
\right.
\]

\end{lemma}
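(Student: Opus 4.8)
The plan is to use the relation $M_{tv}=M_{vt}^{-1}$, which holds by Lemmas~1 and~2 of Wolfram~\cite{wolfram2107} quoted above, and to carry out the computation with $v$ taken to be the monomial exchange basis $s=\{x^m,x^{m+d},\dots,x^n\}$, which is simultaneously an ascending and a descending basis. Rather than inverting $M_{vt}$ by cofactors, I would read the columns of $M_{tv}$ off directly: the $j$-th column of $M_{tv}$ is the coordinate vector, with respect to the truncation basis $t$, of the $j$-th monomial. So it is enough to express each monomial in the basis $t$ and check that only two of its coordinates are nonzero.

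The key step is the observation that, for a fixed polynomial $f$, consecutive truncations differ by a single monomial. Writing $t_j=f(x)_{n,m+jd}$ in the ascending case and letting $c_\ell$ be the coefficient of $x^\ell$ in $f$, the hypotheses that $f$ has minimum degree at most $m$ and degree at least $n$ make every truncation window lie inside $[\,\mathrm{mindeg}\,f,\deg f\,]$, so by Definition~\ref{parcoeff} each $f(x)_{n,\ell}$ is literally the sum of the terms of $f$ of degree in $[\ell,n]$. Hence
\[
f(x)_{n,m+jd}-f(x)_{n,m+(j+1)d}=c_{m+jd}\,x^{m+jd},
\]
with the boundary value $f(x)_{n,n}=c_{n}x^{n}$; the descending case telescopes at the top rather than the bottom. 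I would then identify $c_{m+jd}$ with the diagonal entry $m_{j,j}$ of $M_{vt}$, since that entry is exactly the coefficient of the lowest-degree monomial $x^{m+jd}$ of $t_j$, and it is nonzero by the earlier theorem guaranteeing that these change of basis matrices have non-vanishing diagonals.

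Solving the telescoping identity for the monomial finishes the proof. In the ascending case, with $N=(n-m)/d$,
\[
x^{m+jd}=\frac{1}{m_{j,j}}\bigl(t_j-t_{j+1}\bigr)\quad(j<N),\qquad x^{n}=\frac{1}{m_{N,N}}\,t_N.
\]
These expressions are the columns of $M_{tv}$, so $m'_{j,j}=1/m_{j,j}$, $m'_{j+1,j}=-1/m_{j,j}$, and all remaining entries are zero, which is precisely the stated lower band form (the displayed $1/m_{i,j}$ coincides with $1/m_{j,j}$ on the diagonal). The descending case is identical after replacing $t_{j+1}$ by $t_{j-1}$ and using the boundary relation $x^{m}=t_0/m_{0,0}$, giving the upper band form.

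Conceptually the lemma is forced by this one-term difference of consecutive truncations, so I do not expect a genuine difficulty; the work is bookkeeping. The points needing care are keeping the two index conventions straight — ascending bases are ordered by increasing minimum degree and descending bases by increasing degree, and the same argument must run for $d=1$ and $d=2$ — and treating the two boundary columns (the pure monomials $t_N$ and $t_0$) separately from the interior telescoping. I would also flag that the band structure is special to the monomial target $v=s$: for an arbitrary ascending or descending target the inverse is a full triangular matrix, so the argument genuinely uses that $v$ is the exchange basis of monomials through which every other basis is related.
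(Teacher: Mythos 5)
Your proposal is correct, and it reaches the result by a genuinely different route from the paper. The paper computes $M_{tv}$ by forward substitution (back substitution in the descending case) and runs an induction column by column; the engine of that induction is the claim that all entries in a given row of the lower (resp.\ upper) triangle of $M_{vt}$ coincide, which makes every substitution step a two-term cancellation. Your telescoping identity $f(x)_{n,m+jd}-f(x)_{n,m+(j+1)d}=c_{m+jd}\,x^{m+jd}$ is precisely the polynomial fact underneath that matrix-level claim, but you use it to write the columns of $M_{tv}$ in closed form --- $x^{m+jd}=(t_j-t_{j+1})/m_{j,j}$, with the two boundary columns treated separately --- so no induction or substitution is needed, and the same computation covers $d=1$ and $d=2$ and both parities at once. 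This is shorter and more transparent than the paper's argument; your identification $m_{j,j}=c_{m+jd}$ and the appeal to the earlier invertibility theorem for the nonvanishing of the diagonal are both sound.

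The caveat you end with is more important than you may realize: it is not a limitation of your proof but a correction to the lemma. As printed, the lemma lets $v$ be an arbitrary ascending (or descending) basis, and in that generality both the band conclusion and the paper's equal-rows claim are false. For instance, take $n=2$, $m=0$, $d=1$, $f(x)=x^{2}+x+1$, $g(x)=x^{2}+2x+3$, and let $t=\{x^{2}+x+1,\,x^{2}+x,\,x^{2}\}$ and $v=\{x^{2}+2x+3,\,x^{2}+2x,\,x^{2}\}$, both of which are ascending truncation bases. Then
\[
M_{vt}=\begin{pmatrix} \tfrac13 & 0 & 0\\ \tfrac16 & \tfrac12 & 0\\ \tfrac12 & \tfrac12 & 1 \end{pmatrix},
\qquad
M_{tv}=M_{vt}^{-1}=\begin{pmatrix} 3 & 0 & 0\\ -1 & 2 & 0\\ -1 & -1 & 1 \end{pmatrix},
\]
so $m'_{2,0}=-1\neq 0$ and $m'_{1,0}=-1\neq -1/m_{0,0}=-3$: the inverse is not a band matrix and the stated formulas fail, while the rows of $M_{vt}$ are not constant in the lower triangle (row $1$ has entries $\tfrac16,\tfrac12$). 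One checks easily that the equal-rows property --- and with it the stated form of $M_{tv}$ --- holds exactly when each $v_j$ is a scalar multiple of $x^{m+jd}$, i.e.\ when $v$ is the monomial exchange basis up to rescaling. That is the hypothesis you impose, it is the hypothesis under which the paper's own induction is valid, and it is how the lemma is actually used later in the paper (the truncated Hermite example takes the intermediate basis to be the monomials). So your version is the correct statement-plus-proof, and the paper's lemma should be read with the restriction $v=s$.
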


\begin{proof}
We give the proof step for the case that $t$ is an ascending basis without definite parity. The three other cases are similar.
The change of basis matrix $M_{vt}$ has the form of Figure~\ref{MDAscNP} if $v$ and $t$ do not have definite parity, or Figure~\ref{MDAscP} if they do have definite parity. In both cases, all elements in the same row in the lower triangle are equal, because each column with index $j$ of $M_{vt}$ is the transposed coordinate vector of $f(x)_{n, m +jd}$ with respect to the basis $v$ where $0 \leq j \leq \frac{n-m}d$.

The proof is by induction in forming the inverse of $M_{vt}$ by forward substitution. In the base case, we have that $m'_{0,0}$ of $M_{tv}$ is $\frac{1}{\alpha(n,m,n-m)}$, as required. The inverse of $m_{1, 0}$ of $M_{vt}$ is the solution of 

\begin{equation*}
\frac{\alpha(n,m,n-m-1)}{\alpha(n,m,n-m)} + \alpha(n, m+1, n-m-1) m'_{1,0} = 0.
\end{equation*}

We have that $\alpha(n, m+1, n-m-1) = \alpha(n, m+1, n-m)$, so that $m'_{1,0} = -\frac{1}{\alpha(n,m,n-m)}$, as required.
Suppose the result holds for $1 < i \leq h < n-m$. The inverse of $m_{h,0} = \alpha(n,m, n-m-h)$ is the solution of 
\begin{equation*}
\frac{\alpha(n,m,n-m-h)}{\alpha(n,m,n-m)} - \frac{\alpha(n,m+1,n-m-h)}{\alpha(n,m,n-m)} + \alpha(n, m+h, n-m-h) m'_{h,0} = 0
\end{equation*} because $\alpha(n, m+i, n-m-i) = 0$ where $1 < i < h$ by the induction hypothesis.

It follows that $m'_{h,0} = 0$ because $\alpha(n,m+1,n-m-h) = \alpha(n,m,n-m-h)$ and the result holds for $i = h$. By the principle of mathematical induction, it holds for all $i: 0 \leq i \leq n-m$. The induction proof can be repeated for the second column to form the second column of the inverse matrix, and all subsequent columns. The proof is similar when $v$ has definite parity.

If $t$ is a descending basis, the proof steps are similar except that back substitution is used in the proof by induction.
\end{proof}

As an example of Lemma~\ref{truncband}, let $f(x)$ be the Laguerre polynomial $L_{10}(x)$, $m = 3$ and $n = 6$. We have
\[
L_{10}(x)_{6,3} = \frac7{24}x^6 - \frac{21}{10}x^5 + \frac{35}4 x^4 -20 x^3.
\]

The ascending basis $t = \{ L_{10}(x)_{6,3}, L_{10}(x)_{6,4}, L_{10}(x)_{6,5}, L_{10}(x)_{6,6}\}$.
If $v = \{x^3, x^4, x^5, x^6\}$, then the change of basis matrix 

\begin{center}
$M_{vt} =$
\begin{doublespace}
\noindent\(\left[
\begin{array}{cccc}
-20 & \phantom{-}0 & \phantom{-}0 & 0 \\
\phantom{-}\frac{35}4 & \phantom{-}\frac{35}4 & \phantom{-}0 &0 \\
-\frac{21}{10} &-\frac{21}{10}& -\frac{21}{10}& 0\\
\phantom{-} \frac{7}{24} & \phantom{-}\frac{7}{24} & \phantom{-}\frac{7}{24} & \frac{7}{24} \\
\end{array}
\right]\)
\end{doublespace}
\end{center}

The coefficient function for the elements of this matrix is
\[
m_{i,j} = 
\left\{
\begin{array}{ll} 
\alpha(6, 3+j, 3 - i) & \mbox{if $i \geq j$}\\
\\
0 & \mbox{if $i > i$}\\
\end{array}
\right.
\] where $0 \leq i, j \leq 3$ and $\alpha(6, m, k) = \frac{(-1)^{6-k}}{(6-k)!} \binom{10}{6-k}$. This matrix has the inverse
\begin{center}
$M_{tv} =$
\begin{doublespace}
\noindent\(\left[
\begin{array}{cccc}
-\frac{1}{20} & \phantom{-}0 & \phantom{-}0 & 0 \\
\phantom{-}\frac1{20} & \phantom{-}\frac4{35} & \phantom{-}0 &0 \\
\phantom{-}0 &-\frac{4}{35}& -\frac{10}{21}& 0\\
\phantom{-}0 & \phantom{-}0 & \phantom{-}\frac{10}{21} & \frac{24}{7} \\
\end{array}
\right]\)
\end{doublespace}
\end{center}

For example, from the third column, we have
\begin{align*}
x^5 =& \frac{10}{21}L_{10}(x)_{6,6} - \frac{10}{21}L_{10}(x)_{6,5}\\
=&\frac{10}{21} \left(\frac7{24}x^6 \right)- \frac{10}{21} \left(\frac7{24}x^6 - \frac{21}{10}x^5\right).
\end{align*}
\section{Bernstein Polynomials}

Bernstein polynomials can be defined by
\begin{equation}
b_m^n(x) = \sum\limits_{l = m}^{n} \binom{n}{l}\binom{l}{m} (-1)^{l - m} x ^l
\end{equation} where $m, n \in \mathbb{N}_0$ and $0 \leq m \leq n$. In general, they do not have definite parity and are basis polynomials. 

It follows that the coefficient function for the mapping from Bernstein polynomials to the monomials is given by
\begin{equation} \label{MB}
\alpha(n, m, k) = \binom{n}{n-k}\binom{n-k}{m} (-1)^{n - m-k}
\end{equation} where $0 \leq k \leq n-m$.

A polynomial in Bernstein form~\cite[equation (7)]{farouki} is a parametric vector equation
\begin{equation}
p(t) = \sum\limits_{k=0}^n c_k b_{k}^n(t)
\end{equation} where $c_k$ are $n+1$ B{\'e}zier control points that are vectors with coordinates $(x_k, y_k)$, and $t \in [0,1]$. The basis polynomials in $p(t)$ are Bernstein polynomials. They each have degree $n$, and form the scalar coefficients of the control points.

The change of basis matrix from $\{b_{m}^n(x), b_{m+1}^n(x), \ldots b_{n}^n(x)\}$ to the monomials $\{x^m, x^{m+1}, \ldots, x^n\}$ is a lower triangular matrix with zero entries above the main diagonal. It has entries
$m_{i,j} = \alpha(n, j, n- m-i)$ where $0 \leq i,j \leq n-m$ and $i \geq j$ and $0$ otherwise. The coefficient function, $\alpha$, is given in equation~(\ref{MB}).

For example, if $m = 0$ and $n=3$, the matrix is
\begin{center}
\begin{doublespace}
\noindent\(\left[
\begin{array}{cccc}
\alpha(3,0,3) & 0 & 0 & 0 \\
\alpha(3,0,2) & \alpha(3,1,2) & 0 & 0 \\
\alpha(3,0,1)& \alpha(3,1,1) & \alpha(3,2,1) & 0 \\
\alpha(3,0,0) &\alpha(3,1,0) &\alpha(3,2,0) & \alpha(3,3,0)
\end{array}
\right]\)
$=$
\(\left[
\begin{array}{cccc}
\phantom{-}1 & \phantom{-}0 & \phantom{-}0 & 0 \\
-3& \phantom{-}3 & \phantom{-}0 & 0 \\
\phantom{-}3 & -6 & \phantom{-} 3 & 0 \\
-1 &\phantom{-}3 & -3 & 1
\end{array}
\right]\)
\end{doublespace}
\end{center} and we have
\begin{align*}
b_{0,3}(x) =& -x^3 + 3 x^2 - 3x + 1\\
b_{1,3}(x) =& 3x^3 - 6x^2 + 3x.
\end{align*}

The inverse mapping~\cite[equation (B5)]{mathar} is given by
\begin{equation} \label{invB1}
x^k = \sum_{i=0}^{n-k} \binom{n-k}{i} \frac{1}{\binom{n}{i}} b_{n-i}^n (x)
\end{equation} It expresses $x^k$ in terms of the ascending basis
$\{b_{k}^n(x), b_{k+1}^n(x), \ldots b_{n}^n(x)\}$. This gives the coefficient function
\begin{equation} \label{invB1a}
\alpha(n,m,k) = \frac{\binom{n-m}{k}}{\binom{n}{k}}.
\end{equation}

For example,
\[
x^3 = \alpha(5,3,0) b_{5}^5(x) + \alpha(5, 3, 1) b_{4}^{5}(x) + \alpha(5,3,2) b_{3}^{5}(x).
\]

Mathar~\cite{mathar} did not give a proof of Equation~(\ref{invB1}). It can be verified as follows.
\begin{theorem}
$\alpha(n,m,k) = \frac{\binom{n-m}{k}}{\binom{n}{k}}$ for all $k: 0 \leq k \leq n-m$.
\end{theorem}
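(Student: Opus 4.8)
The plan is to verify directly that the proposed coefficient function reproduces each monomial, since asserting $\alpha(n,m,k)=\binom{n-m}{k}/\binom{n}{k}$ is precisely the claim that this quantity is the coefficient of $b_{n-k}^n(x)$ in the ascending Bernstein representation of $x^m$. Concretely, after relabelling the indices of~(\ref{invB1}), I would establish
\[
x^m = \sum_{k=0}^{n-m}\frac{\binom{n-m}{k}}{\binom{n}{k}}\, b_{n-k}^n(x),
\]
and then simply read off the coefficient function.

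First I would replace the monomial definition of the Bernstein polynomials by the compact product form $b_j^n(x)=\binom{n}{j}x^j(1-x)^{n-j}$. This follows from the stated definition in one line: applying the identity $\binom{n}{l}\binom{l}{j}=\binom{n}{j}\binom{n-j}{l-j}$ and resumming the inner index $l-j$ recovers the binomial expansion of $(1-x)^{n-j}$. Substituting $b_{n-k}^n(x)=\binom{n}{n-k}x^{n-k}(1-x)^{k}$ into the display and using the symmetry $\binom{n}{n-k}=\binom{n}{k}$, the factor $\binom{n}{k}$ cancels the denominator, so the right-hand side collapses to $\sum_{k=0}^{n-m}\binom{n-m}{k}x^{n-k}(1-x)^{k}$. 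Pulling out $x^m$ and invoking the binomial theorem gives $x^m\bigl(x+(1-x)\bigr)^{n-m}=x^m$, which completes the verification and simultaneously confirms the ranges $0\le k\le n-m$.

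With the product form in hand the argument has essentially no obstacle, so the only real decision is whether to import that form or to argue entirely within the monomial definition the paper already uses. If I stay with the monomial definition, the hard part will be the double-sum bookkeeping: after substituting the monomial expansion of each $b_{n-k}^n$ and exchanging the order of summation, I would collect the coefficient of a fixed power $x^l$ and, after cancelling factorials, reduce it to the alternating sum $\sum_{p}\binom{l-m}{p}(-1)^{l-m-p}=(1-1)^{l-m}$, which vanishes unless $l=m$ and equals $1$ when $l=m$, again yielding $x^m$. The delicate point on that route is pinning down the summation ranges so that the binomial-theorem collapse is exactly exhibited; the product-form route sidesteps this entirely, and it is the one I would present.
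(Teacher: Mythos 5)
Your proof is correct, but it follows a genuinely different route from the paper's. The paper proves this theorem by forming the inverse of the lower-triangular matrix $M_{sv}$ via forward substitution and arguing by induction, with the key induction step --- the vanishing of $\sum_{l=0}^{h}\alpha_1(n,m+l,n-m-h)\,\alpha(n,m,n-m-l)$, where $\alpha_1$ is the coefficient function of equation~(\ref{MB}) --- discharged by Gosper's algorithm in the Mathematica package fastZeil. You instead verify the expansion directly: after converting the paper's monomial definition of $b_j^n(x)$ into the product form $\binom{n}{j}x^j(1-x)^{n-j}$ (a one-line application of $\binom{n}{l}\binom{l}{j}=\binom{n}{j}\binom{n-j}{l-j}$), the sum
\[
\sum_{k=0}^{n-m}\frac{\binom{n-m}{k}}{\binom{n}{k}}\, b_{n-k}^n(x)
=\sum_{k=0}^{n-m}\binom{n-m}{k}\,x^{n-k}(1-x)^{k}
=x^m\bigl(x+(1-x)\bigr)^{n-m}=x^m
\]
collapses by the binomial theorem, and uniqueness of coordinates with respect to a basis then pins down the coefficient function. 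Your argument is more elementary and entirely self-contained: it needs no computer algebra, no induction, and no bookkeeping of the substitution process, so it is also shorter and easier to check by hand. What the paper's approach buys is uniformity: the same induction-plus-substitution template, with the inductive step certified algorithmically, is reused for Theorems~\ref{invdesa}, \ref{RMD} and \ref{beta-RM}, where no product form is available to make a direct verification collapse so cleanly. Both proofs establish the same statement; yours is the one a reader would likely prefer for this particular theorem.
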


\begin{proof}
The proof is by mathematical induction by forming the inverse of the matrix $M_{sv}$ by forward substitution where $s = \{x^m, x^{m+1}, \ldots, x^n\}$ and
$v$ is the ascending basis $\{b_m^n(x), b_{m+1}^n(x), \ldots, b_n^n(x)\}$. Please see Figure~\ref{MDAscNP} above for the form of $M_{sv}$.

For clarity, we rename the coefficient function in equation~(\ref{MB}) to $\alpha_1$. The base case is for $k = n-m$. We have
\begin{align*}
\alpha(n,m,n-m) = & \frac{1}{\alpha_1(n,m,n-m)}\\
=& \frac{1}{\binom{n}{m}}
\end{align*}
From equation~(\ref{invB1a}), we have $\alpha(n,m,n-m) = \frac{1}{\binom{n}{n-m}}$, as required.

The induction hypothesis is that result holds for all $k = n-m-l$ where $0 \leq l < h \leq n-m$.
It suffices to show that 
\[
\sum\limits_{l=0}^h \alpha_1(n, m+l, n-m-h) \alpha(n,m, n-m-l) = 0.
\] We apply Gosper's algorithm using the Mathematica package `fastZeil' version 3.61~\cite{paulezbf} that shows the sum is $0$ provided none of $\{h+m, n, -m+n\}$ is negative and $h \not= 0$. These conditions are satisfied.
The result holds for $k = n-m-l-1$, i.e., $h=l+1$. Hence, by the principle of mathematical induction, it holds for all $k: 0 \leq k \leq n-m$, as required.
\end{proof}

\subsection{The Descending Basis}

The monomial basis $s = \{x^m, x^{m+1}, \ldots, x^n\}$ is unchanged, however the basis of Bernstein polynomials is a descending basis
$v = \{b_{k}^{k}(x), b_{k}^{k+1}(x), \ldots, b_{k}^n(x)\}$. We find the coefficient function $\alpha$ for the entries of the change of basis matrix $M_{vs}$.

This coefficient function is
\begin{equation} \label{adesc}
\alpha(n, m, k) =(-1)^{n- m -k} \frac{\binom{n}{k}}{\binom{n}{m}}
\end{equation} where $0 \leq k \leq n-m$, so that

\begin{equation}
x^k = \sum\limits_{l=0}^{k-m} \alpha( k, m, l) b_m^{k-l} (x).
\end{equation}

For example, when $n =6$ and $m = 3$, the change of basis matrix is 
\begin{center}
\begin{doublespace}
\(
\left[
\begin{array}{cccc}
\alpha(3,3,0) & \alpha(4,3,1) & \alpha(5,3,2) & \alpha(6,3,3) \\
0 & \alpha(4,3,0) & \alpha(5,3,1) & \alpha(6,3,2) \\
0 & 0 & \alpha(5,3,0) & \alpha(6,3,1) \\
0 &0 &0 & \alpha(6,3,0)
\end{array}
\right]
=
\left[
\begin{array}{cccc}
1 & \phantom{-}1 & \phantom{-}1 & \phantom{-}1 \\
0 & -\frac14 & -\frac12 & -\frac34 \\
0 & \phantom{-}0 & \phantom{-}\frac1{10} & \phantom{-}\frac3{10} \\
0 &\phantom{-}0 &\phantom{-}0 & -\frac{1}{20}
\end{array}
\right]\)
\end{doublespace}
\end{center}
so that
\begin{align*}
x^6 =& -\frac1{20} b_{3}^{6}(x) + \frac3{10}b_{3}^{5}(x) - \frac34 b_{3}^{4}(x) + b_{3}^{3}(x)\\
\\
x^5 =& \alpha(5, 3, 0) b_{3}^{5}(x) + \alpha(5, 3, 1) b_{3}^{4}(x) + \alpha(5, 3, 2)b_{3}^{3}(x)\\
=& \frac1{10}b_{3}^{5}(x) - \frac12 b_{3}^{4}(x) + b_{3}^{3}(x).
\end{align*}

\begin{theorem} \label{invdesa}
The coefficient function for the mapping from $\{x^{m}, x^{m + 1}, \dots, x^n\}$ where $0 \leq m \leq n$ to $\{b_{m}^{m}(x), b_{m}^ {m+1}(x), \ldots, b_{m}^{n}(x)\}$ is
\[
\alpha(n, m, k) =(-1)^{n- m -k} \frac{\binom{n}{k}}{\binom{n}{m}}.
\]
\end{theorem}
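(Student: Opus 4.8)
The plan is to verify the claimed expansion directly by substituting the monomial form of the Bernstein polynomials and checking that all monomial coefficients collapse correctly. Concretely, I would fix the target degree $n$ and prove the equivalent statement
\[
\sum_{k=0}^{n-m} \alpha(n,m,k)\, b_m^{n-k}(x) = x^n,
\qquad \alpha(n,m,k) = (-1)^{n-m-k}\frac{\binom{n}{k}}{\binom{n}{m}},
\]
which is exactly the assertion that the $n$th column of the change of basis matrix $M_{vs}$ reconstructs $x^n$.

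First I would substitute the definition $b_m^{n-k}(x) = \sum_{l=m}^{n-k}\binom{n-k}{l}\binom{l}{m}(-1)^{l-m}x^l$, interchange the order of summation, and extract the coefficient of each monomial $x^l$ for $m \le l \le n$. Because $\binom{n-k}{l}$ vanishes unless $k \le n-l$, the inner sum over $k$ runs from $0$ to $n-l$, and the coefficient of $x^l$ reduces to
\[
\frac{\binom{l}{m}}{\binom{n}{m}}(-1)^{l-m}\sum_{k=0}^{n-l}(-1)^{n-m-k}\binom{n}{k}\binom{n-k}{l}.
\]

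The key step is the standard identity $\binom{n}{k}\binom{n-k}{l} = \binom{n}{l}\binom{n-l}{k}$, which lets me pull $\binom{n}{l}$ out of the $k$-sum and collapse the remainder to the alternating binomial sum $\sum_{k=0}^{n-l}(-1)^{k}\binom{n-l}{k} = (1-1)^{n-l}$. This is $0$ whenever $l<n$ and equals $1$ when $l=n$, so every monomial coefficient strictly below $x^n$ cancels. Finally I would confirm that the surviving coefficient of $x^n$ is exactly $1$: at $l=n$ the prefactor $\frac{\binom{n}{m}}{\binom{n}{m}}(-1)^{n-m}$ multiplies the single surviving contribution $(-1)^{n-m}$, giving $(-1)^{2(n-m)}=1$.

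I expect the only real friction to be the bookkeeping of signs and binomial factors after the interchange of summation; once the revision identity is applied, the vanishing via the binomial theorem is immediate, so there is no genuine hard step. An induction on the matrix (forward substitution, as in the preceding theorem) would also work, but the direct coefficient computation is cleaner and avoids invoking a symbolic summation package. As a sanity check I would verify that the $n=6$, $m=3$ column agrees with the displayed matrix, since the sign pattern $(-1)^{n-m-k}$ is the part most easily transcribed incorrectly.
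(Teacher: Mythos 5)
Your proof is correct, and it takes a genuinely different route from the paper. The paper proves the theorem by induction, forming the inverse of the matrix with entries $\binom{m+j}{m+i}\binom{m+i}{m}(-1)^i$ by back substitution, and closes the induction step by showing an alternating sum vanishes --- a step it delegates to Gosper's algorithm (the \texttt{fastZeil} package), with a hand computation relegated to an appendix that invokes the partial alternating-sum identity $\sum_{k=0}^{m}(-1)^k\binom{n}{k}=(-1)^m\binom{n-1}{m}$ of Gradshteyn--Ryzhik. You instead verify the forward expansion $\sum_{k=0}^{n-m}\alpha(n,m,k)\,b_m^{n-k}(x)=x^n$ directly: substitute the monomial form of $b_m^{n-k}$, interchange the sums, apply the trinomial revision $\binom{n}{k}\binom{n-k}{l}=\binom{n}{l}\binom{n-l}{k}$, and kill every coefficient with $l<n$ by the binomial theorem $(1-1)^{n-l}=0$, leaving the top coefficient equal to $(-1)^{2(n-m)}=1$; since expansion in a basis is unique, this identifies $\alpha$ as the coefficient function, and since $n$ is arbitrary it covers every column of $M_{vs}$. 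Your sign and index bookkeeping checks out (for $l>n-k$ the binomial $\binom{n-k}{l}$ vanishes, so the $k$-range collapses correctly, and the $n=6$, $m=3$ column indeed reproduces the displayed matrix). What your route buys is a fully self-contained, elementary argument with no computer-algebra dependency and no induction; what the paper's route buys is uniformity, since the same back-substitution-plus-Gosper template is reused for its other inverse coefficient-function results (Theorems~\ref{RMD} and~\ref{beta-RM}), where a direct collapse of this kind is less readily available.
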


\begin{proof}
The proof is by mathematical induction and uses back substitution to form the inverse matrix of the matrix with entries $m_{i, j} = \binom{m + j}{m + i} \binom{m + i}{m} (-1)^i$ where $0 \leq i, j \leq n-m$.

The equation has the following form (see Figure~\ref{MDesNP} above):

\begin{gather*}
\resizebox{\textwidth}{!}{%
$\displaystyle
\begin{bmatrix}
\vdots & \cdots & \vdots & \vdots & \vdots & \vdots \\
0 & \cdots & \alpha_1(n-3,m, 0) & \alpha_1(n-2, m, 1) & \alpha_1(n-1,m, 2)& \alpha_1(n ,m, 3)\\
0 & \cdots & 0 & \alpha_1(n-2,m, 0) & \alpha_1(n-1, m, 1)& \alpha_1(n, m ,2)\\
0 & \cdots &0 & 0 & \alpha_1(n-1,m, 0)& \alpha_1(n, m,1)\\
0 & \cdots & 0 & 0 & 0 & \alpha_1(n, m, 0)
\end{bmatrix}
\begin{bmatrix}
\vdots \\
b_3 \\
b_2\\
b_1 \\
b_0
\end{bmatrix}
= 
\begin{bmatrix}
\vdots \\
0 \\
0\\
0\\
1
\end{bmatrix}$}
\end{gather*} where the row and column indices are $0 \leq i, j \leq n - m$ and $\alpha_1$ is defined in equation (\ref{MB}), and $b_i = \alpha(n,m,i)$.

The base case is $b_0 = \frac{1}{\alpha_1(n, m, 0)}$. This is $\frac{1}{\binom{n}{m}(-1)^{n - m}} = (-1)^{n - m - k} \frac{\binom{n}{k}}{\binom{n}{m}}$, as required.

The induction hypothesis is that the result holds for all $l: 0 \leq l < h \leq n - m$. We need to show that

\[
\alpha(n,m,h) = -\frac{\sum\limits_{l=0}^{h-1} \alpha_1(n-l, m, h-l) \; \alpha(n,m,l)}{\alpha_1(n-h,m,0)}.
\]

We apply Gosper's algorithm using the Mathematica package `fastZeil' version 3.61~\cite{paulezbf} to the equivalent sum
\[
\sum\limits_{l=0}^{h} \alpha_1(n-l, m, h-l) \; \alpha(n,m,l).
\]
It evaluates to $0$ provided that $h$ is a natural number and $(-1)^{h+2n} h \binom{n}{m} \not = 0$, i.e., $n \geq m \geq 0$. These conditions are satisfied.
The result holds for $l+1$.
Hence, by the principle of mathematical induction, it holds for all $l: 0 \leq l \leq n-m$, as required.
\end{proof}

\begin{remark}\rm
We give a non-algorithmic proof of the induction step of the proof of Theorem~\ref{invdesa} in appendix~\ref{appfor4}.
\end{remark}

\section{Zernike Radial Polynomials}

The Zernike Radial polynomials are orthogonal polynomials, but not classical ones.
They have definite parity and can be defined by
\begin{equation} \label{defR}
R_n^m(x)= \sum_{k=0}^{\frac{n-m}2}\binom{n-k}{k}\binom{n-2k}{\frac{n-m}{2} - k} (-1)^k x^{n - 2k} 
\end{equation} $ 0 \leq m \leq n$ and $n-m$ is even. The degree of $R_n^m(x)$ is $n$, and $m$ is the least degree of its terms when the polynomial is expressed in the monomial basis.

This gives the following coefficient function for the mapping from the Zernike Radial polynomials to the monomials.

\begin{equation} \label{MR}
\beta(n, m, k) = \binom{n-k}{k}\binom{n-2k}{\frac{n-m}{2} - k} (-1)^k
\end{equation} where $0 \leq k \leq \frac{n-m}2$, and $n-m$ is even.

For the mapping from the monomials, there are two inverse coefficient functions depending on whether the range basis is a descending basis or an ascending one.

An example of a descending polynomial basis is 
\[
v =\{R_3^3(x), R_5^3(x), R_7^3(x), R_9^3(x)\}
\] so that $s = \{x^3, x^5, x^7, x^9\}$, $d = 2$, $m=3$ and $n = 9$.
The change of basis matrix $M_{sv}$ is a $4 \times 4$ upper triangular matrix
\begin{gather*}
\begin{bmatrix}
\beta(3,3,0) & \beta(5,3,1)& \beta(7,3,2) & \beta(9,3,3) \\
\\
0 & \beta(5,3,0)& \beta(7,3,1)& \beta(9,3,2)\\
\\
0 & 0 & \beta(7,3,0)& \beta(9,3,1) \\
\\
0& 0 & 0&\beta(9,3,0)
\end{bmatrix}
=
\begin{bmatrix}
1 & -4 & \phantom{-}10 & -20 \\
\\
0 & \phantom{-}5& -30& \phantom{-}105\\
\\
0 & \phantom{-}0 & \phantom{-}21& -168 \\
\\
0& \phantom{-}0 & \phantom{-}0&\phantom{-}84 
\end{bmatrix}
\end{gather*} 
When $j=2$, the third column is the transpose of the  coordinate vector of $R_7^3(x) =  21 x^7 - 30x^5 + 10x^3$ with respect to $s$, i.e., $(10, -30, 21, 0)^T$.

\smallskip
An example of the inverse mapping is the change of basis matrix from $s = \{1, x^2, x^4, x^6\}$ to $v =\{R_0^0(x), R_2^0(x), R_4^0(x), R_6^0(x)\}$, i.e.,
\begin{gather*}
M_{vs} =
\begin{bmatrix}
1 & \frac12 & \frac13 & \frac14\\
\\
0 & \frac12& \frac12 & \frac{9}{20} \\
\\
0 & 0 & \frac16& \frac14 \\
\\
0& 0 & 0& \frac1{20}
\end{bmatrix}
\end{gather*} where the element $m_{i,j} = \beta(2j, 0, j-i)$ and $0 \leq i \leq j \leq 3$. 
From the third and fourth columns, we have
\begin{align*}
x^4 =& \beta(4, 0, 0) R_4^0(x) + \beta(4, 0, 1) R_2^0(x) + \beta(4, 0, 2)R_0^0(x)\\
=&\frac16 R_4^0(x) + \frac12 R_2^0(x) + \frac13 R_0^0(x)\\
x^6 =& \frac1{20} R_6^0(x) + \frac14 R_4^0(x) + \frac{9}{20}R_2^0(x) + \frac{1}{4}R_0^0(x).
\end{align*}

The following theorem provides the inverse mapping when the range basis is a descending basis. 
\begin{theorem} \label{RMD}
Let 
\[
s = \{x^m, x^{m+2}, \ldots, x^n\} \: \mbox{\rm  and }  v = \{R_{m}^{m}(x), R^{m}_{m+2}(x), \ldots, R^{m}_n(x)\}
\] where $n \geq m \geq 0$ and $n$ and $m$ have the same parity.
Then the coefficient function for the mapping from $s$ to $v$ is
\begin{equation} \label{RM-desc}
\beta(n, m, k) = \frac{n - 2k + 1}{n - k + 1}\frac{\binom{n}{k}}{\binom{n}{\frac{n-m}2}}
\end{equation} where $0 \leq k \leq \frac{n-m}2$.
\end{theorem}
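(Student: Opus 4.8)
The plan is to follow the template of Theorem~\ref{invdesa}: the matrix $M_{sv}$ that sends the descending Zernike basis $v$ to the monomials $s$ is the upper-triangular matrix of Figure~\ref{MDesP}, whose entries are given by the forward coefficient function of equation~(\ref{MR}), which I rename $\beta_1(n,m,k)=\binom{n-k}{k}\binom{n-2k}{\frac{n-m}{2}-k}(-1)^k$ for clarity. Since $M_{vs}=M_{sv}^{-1}$, I would compute this inverse by back substitution and prove by induction on the index $h$ that its entries are exactly the claimed $\beta(n,m,k)$; equivalently, I prove the last-column expansion $x^{n}=\sum_{k=0}^{(n-m)/2}\beta(n,m,k)\,R^m_{n-2k}(x)$ for each $n\equiv m$. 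The base case $h=0$ is immediate: $\beta(n,m,0)=1/\beta_1(n,m,0)=1/\binom{n}{(n-m)/2}$, which agrees with the formula since its leading factor is $\frac{n+1}{n+1}\binom{n}{0}=1$.

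For the induction step, solving the triangular system row by row reduces the entire claim to the vanishing of the off-diagonal entries of $M_{sv}M_{vs}$, exactly as in Theorem~\ref{invdesa}: it suffices to show
\[
\sum_{l=0}^{h}\beta_1\!\left(n-2l,\,m,\,h-l\right)\,\beta(n,m,l)=0 \qquad \left(1\le h\le \tfrac{n-m}{2}\right).
\]

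The decisive simplification is that this sum collapses in closed form. First, the second binomial factor of $\beta_1(n-2l,m,h-l)$ equals $\binom{n-2h}{\frac{n-m}{2}-h}$, which is independent of $l$ and can be pulled out of the sum. Second, the target coefficient function is a normalized ballot number: since $\binom{n}{l}-\binom{n}{l-1}=\frac{n-2l+1}{n-l+1}\binom{n}{l}$, we have $\binom{n}{(n-m)/2}\,\beta(n,m,l)=\binom{n}{l}-\binom{n}{l-1}$. After removing all factors free of $l$, the identity reduces to
\[
\sum_{l=0}^{h}(-1)^{l}\binom{n-h-l}{h-l}\bigl[\binom{n}{l}-\binom{n}{l-1}\bigr]=0.
\]
I would then evaluate the two resulting sums separately. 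Applying upper negation $\binom{n-h-l}{h-l}=(-1)^{h-l}\binom{2h-n-1}{h-l}$ and Vandermonde's identity, the sum against $\binom{n}{l}$ equals $(-1)^h\binom{2h-1}{h}$, and the sum against $\binom{n}{l-1}$, after shifting the index by one, equals $(-1)^h\binom{2h-1}{h-1}$. These agree by the symmetry $\binom{2h-1}{h}=\binom{2h-1}{h-1}$, so their difference vanishes and the induction step closes.

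The main obstacle is purely in spotting the two structural simplifications — that the second binomial of $\beta_1$ is free of the summation variable, and that $\beta$ is a normalized difference of adjacent binomial coefficients — after which the computation is a routine double application of Vandermonde. Absent that insight, the induction step can instead be discharged mechanically by Gosper's or Zeilberger's algorithm applied to the displayed sum, exactly as was done for the Bernstein descending basis in the proof of Theorem~\ref{invdesa}. I would prefer the closed-form route, since it yields a fully human-checkable proof and makes transparent \emph{why} the inversion relation holds.
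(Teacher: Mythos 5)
Your proposal is correct, and its skeleton --- invert the upper-triangular matrix of Figure~\ref{MDesP} by back substitution and reduce the induction step to the vanishing of $\sum_{l=0}^{h}\beta_1(n-2l,m,h-l)\,\beta(n,m,l)$ --- is exactly the paper's. Where you genuinely depart is in how that sum is killed. The paper's main-text proof discharges it mechanically with Gosper's algorithm (the fastZeil package~\cite{paulezbf}), and its non-algorithmic version in Appendix~\ref{appfor5} also pulls out the $l$-free factor $\binom{n-2h}{\frac{n-m}{2}-h}$ just as you do, but then splits $n-2l+1$ as $(n+1)-2l$ and evaluates the resulting sums through Gamma-function ratios and the Beta-function identity of Gradshteyn--Ryzhik~\cite[equation 0.160.2]{zwillinger}. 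You instead use the ballot-number identity $\frac{n-2l+1}{n-l+1}\binom{n}{l}=\binom{n}{l}-\binom{n}{l-1}$, reducing the step to $\sum_{l=0}^{h}(-1)^{l}\binom{n-h-l}{h-l}\bigl[\binom{n}{l}-\binom{n}{l-1}\bigr]=0$, which closes with upper negation, two applications of Chu--Vandermonde, and the symmetry $\binom{2h-1}{h}=\binom{2h-1}{h-1}$. I verified the reduction and both closed forms: the two partial sums equal $(-1)^h\binom{2h-1}{h}$ and $(-1)^h\binom{2h-1}{h-1}$ respectively (e.g.\ for $n=6$, $h=2$ both are $3$), and the Vandermonde step is legitimate despite the negative upper index $2h-n-1$ because Chu--Vandermonde is a polynomial identity in the upper arguments. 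As for what each route buys: yours is the most elementary and human-checkable of the three --- pure finite binomial identities, no special functions, no computer algebra --- and it makes structurally visible why the inverse coefficients are normalized differences of adjacent binomial coefficients; the paper's Gosper route requires no insight and transfers verbatim to Theorems~\ref{invdesa} and~\ref{beta-RM}, while its Beta-function computation is the template reused in the paper's other appendices. Your fallback remark (discharge the sum by Gosper/Zeilberger) is, in fact, precisely the paper's main-text proof.
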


\begin{proof}
The proof is by mathematical induction and uses back substitution to find the inverse of the upper triangular change of basis matrix from the basis $v = \{R_m^m(x), R_{m+2}^m(x), \ldots, R_n^m(x)\}$ to $s=\{x^m , x^{m+2}, \ldots, x^n\}$. Please see Figure~\ref{MDesP} above for the form of the change of basis matrix $M_{sv}$. The base case is when $k=0$.

We have from equation~(\ref{MR}), that $\beta(n, m, 0) = \frac{1}{\binom{n}{\frac{n-m}2}}$, as required.
The induction hypothesis is that the result holds for all $l: 0 \leq l < h \leq \frac{n - m}2$. We need to show that %%UP TO HERE
\[
\frac{n - 2h + 1}{n - h + 1}\frac{{n \choose h}}{{n \choose {\frac{n-m}2}}} = -\frac{\sum\limits_{l=0}^{h-1} \beta_1(n-2l, m, h-l) \; \beta(n,m,l)}{\beta_1(n-2h, m, 0)}
\] where $\beta_1$ is defined in equation~(\ref{MR}). 

This is equivalent to showing that
\[
\sum\limits_{l=0}^{h} \beta_1(n-2l, m, h-l) \; \beta(n,m,l) = 0.
\]

We apply Gosper's algorithm using the Mathematica package `fastZeil' version 3.61~\cite{paulezbf} to the sum on the left side of the equation. It evaluates to $0$ provided that $h$ is a natural number, none of $\{n, -2h + n \}$ is a negative integer and $h {n \choose{\frac12 (-m+n}} \not= 0$. These conditions are satisfied because $h> 0$ by the induction hypothesis, the largest value of $2h$ is $n-m \geq 0$ and $n-m \leq n$, $n \geq 0$ and $\frac{n-m}2 \geq 0$.

The result holds for $h = l+1$. Hence, by the principle of mathematical induction, it holds for all $l: 0 \leq l \leq \frac{n - m}2$, as required.
\end{proof}

\begin{remark}\rm
We give a non-algorithmic proof of the induction step of the proof of Theorem~\ref{RMD} in appendix~\ref{appfor5}.
\end{remark}

\subsection{The Ascending Basis}
The other inverse mapping we consider is when the range basis is an ascending basis in Zernike Radial polynomials. It is given by the following theorem.

\begin{theorem} \label{beta-RM}
The coefficient function for the mapping
\[ 
\{x^m, x^{m+2}, \ldots, x^n\} \rightarrow \{ R_n^n(x), R_n^{n-2}(x), \ldots , R_n^{m+2}, R_n^m(x)\}
\]
where $n \geq m \geq 0$ and $n$ and $m$ have the same parity is
\begin{equation}
\beta(n, m, k) = (-1)^k \frac{(m+1)_{v-k-1} (n- 2k)}{(v-k)! {{v+m} \choose v}} \label{b4}
\end{equation} where $v = \frac{n-m}2$, $0 \leq k \leq v$,
\end{theorem}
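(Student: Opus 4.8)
The plan is to follow the inductive scheme of Theorems~\ref{invdesa} and~\ref{RMD}. Since the target is an ascending basis with definite parity, the change of basis matrix $M_{sv}$ from $v = \{R_n^n(x), R_n^{n-2}(x), \ldots, R_n^m(x)\}$ to the monomials $s = \{x^m, x^{m+2}, \ldots, x^n\}$ is the lower-triangular matrix of Figure~\ref{MDAscP}, with entries $\beta_1(n, m+2j, l-i)$ for $i \ge j$, where $\beta_1$ is the coefficient function~(\ref{MR}) and $l = v = \frac{n-m}2$. I would obtain the inverse $M_{vs}$, whose entries are the sought $\beta(n, m+2j, l-i)$, by forward substitution and prove~(\ref{b4}) by induction on the row index. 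Note that, in contrast to Lemma~\ref{truncband}, the polynomials $R_n^{m+2j}(x)$ are genuinely distinct rather than truncations of a single polynomial, so the columns of $M_{sv}$ are not equal below the diagonal and the inverse is \emph{not} a band matrix; the full inductive argument is therefore needed.

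It suffices to treat the first column of $M_{vs}$, namely $(\beta(n,m,l), \beta(n,m,l-1), \ldots, \beta(n,m,0))^{T}$, because each remaining column has the same form with $m$ replaced by $m+2j$ and $v$ by $v-j$. Writing $x_k = \beta(n,m,l-k)$, the lower-triangular system $M_{sv}\,x = e_0$ gives, in row $0$, the base case $\beta_1(n,m,l)\,\beta(n,m,l) = 1$. Here $\beta_1(n,m,l) = (-1)^l\binom{n-l}{l} = (-1)^v\binom{v+m}{v}$, and evaluating~(\ref{b4}) at $k=v$ yields $\beta(n,m,v) = (-1)^v/\binom{v+m}{v}$, so the product is $1$. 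In this evaluation the Pochhammer factor $(m+1)_{-1}$ is read as $\frac1m$, and the degenerate product $(m+1)_{v-k-1}(n-2k)$ at $m=0,\ k=v$ is assigned its value $1$ obtained by continuity in $m$; one checks directly that $\beta(n,0,v) = (-1)^v$ is indeed the correct coefficient of $R_n^0(x)$.

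For the induction step, row $i \ge 1$ of the system is the recurrence
\[
\sum_{k=0}^{i} \beta_1(n,\,m+2k,\,l-i)\,\beta(n,m,l-k) = 0,
\]
which must be verified for the closed form~(\ref{b4}). Substituting~(\ref{MR}) and~(\ref{b4}) and using $n-2l = m$ and $n-l = v+m$, the two binomial coefficients in $\beta_1$ collapse to $\binom{v+m+i}{v-i}\binom{m+2i}{i-k}$ while the factors of $\beta$ reduce to $(-1)^{l-k}\frac{(m+1)_{k-1}(m+2k)}{k!\binom{v+m}{v}}$. After extracting the factors independent of $k$, which are nonzero for $0 \le i \le v$, the identity becomes the single hypergeometric sum
\[
\sum_{k=0}^{i} (-1)^{k}\binom{m+2i}{i-k}\frac{(m+1)_{k-1}(m+2k)}{k!} = 0 \qquad (i \ge 1).
\]
I would establish this exactly as in the earlier theorems, by applying Gosper's algorithm through the \texttt{fastZeil} package to the summand in $k$: the antidifference it returns telescopes to $0$ over $0 \le k \le i$ under the hypotheses $i \ge 1$ and $n \ge m \ge 0$.

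The step I expect to be the main obstacle is this reduction to a clean proper hypergeometric term in $k$. The summand couples the varying lower parameter $m+2k$ of $\beta_1$ with the varying degree index $l-k$ of $\beta$, so collapsing the resulting product of binomial coefficients and Pochhammer symbols into a single term requires care, as does the boundary bookkeeping for $(m+1)_{k-1}$ at $k=0$ together with the degenerate case $m=0$. Once the summand is in hypergeometric form with parameters $m$ and $i$, the vanishing of the sum is routine for Gosper's (or Zeilberger's) algorithm, and by the principle of mathematical induction the closed form~(\ref{b4}) then holds for all $k$ with $0 \le k \le v$.
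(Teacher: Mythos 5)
Your proposal is correct and follows essentially the same route as the paper's proof: the same lower-triangular matrix of Figure~\ref{MDAscP}, inversion by forward substitution column-wise (with the observation that the first column with general $m$ suffices), the same base case including the reading $(m+1)_{-1}\,m = 1$, and the same reduction of the induction step to the vanishing of the single sum $\sum_{k=0}^{i}(-1)^{k}\binom{m+2i}{i-k}\frac{(m+1)_{k-1}(m+2k)}{k!}$, which is exactly the sum that appears in the paper's Appendix~\ref{appfor6}. The only divergence is the final certification of that identity: the paper's main text invokes Mathematica's \texttt{Simplify}, and its appendix gives a manual proof via the Beta-function identity of Gradshteyn--Ryzhik, whereas you invoke Gosper's algorithm; the step you flag as the ``main obstacle'' does in fact go through, since the summand $t_k$ admits the rational certificate $R(k) = -\frac{k(m+i+k)}{i(m+2k)}$, i.e.\ the hypergeometric antidifference $T_k = (-1)^{k+1}\frac{m+i+k}{i}\binom{m+2i}{i-k}\frac{(m+1)_{k-1}}{(k-1)!}$ (with $T_0 = 0$) satisfies $T_{k+1}-T_k = t_k$, and $T_{i+1}=0$ because $\binom{m+2i}{-1}=0$, so the sum telescopes to zero exactly as you predicted.
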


\begin{proof}
The proof is by mathematical induction and it uses forward substitution to find the inverse of the lower triangular change of basis matrix from the basis $\{R_n^n(x), R_{n}^{n-2}(x), \ldots, R_n^{m+2}(x), R_n^m(x)\}$ to $\{x^m , x^{m+2}, \ldots, x^n\}$. Please see Figure~\ref{MDAscP} above for the form of the change of basis matrix. The elements of the inverse matrix are found column-wise from left to right.

The base case has $h=0$ and $k=v-h$. From equation (\ref{b4}), 
\begin{align*}
\beta(n, m, v) =& (-1)^{\frac{n-m}2} \frac{(m+1)_{-1} m}{{{\frac{n+m}2} \choose {\frac{n-m}2}}}\\
=& (-1)^{\frac{n-m}2} \frac{1}{{{\frac{n+m}2} \choose {\frac{n-m}2}}}
\end{align*}

We rename the function $\beta$ in equation (\ref{MR}) to $\beta_1$ to prevent ambiguity. From this equation, the entry in the first row and last column of the change of basis matrix is
\begin{align*}
\beta_1(n,m,\frac{n-m}2) =& {{n- \frac{n-m}2 } \choose {\frac{n-m}2}} {{n-2 \left( \frac{n-m}2 \right) } \choose {\frac{n-m}{2} - {\frac{n-m}2}}} (-1)^{\frac{n-m}2} \\
=& {{\frac{n+m}2 } \choose {\frac{n-m}2}} (-1)^{\frac{n-m}2}
\end{align*}
We know that $\beta_1(n,m,v) \beta(n, m, v) = 1$, and from the above, $\beta(n, m, v) = \frac{1}{\beta_1(n,m,\frac{n-m}2) }$, as required.

The induction hypothesis is that the result holds for all $l: 0 \leq l < h \leq \frac{n - m}2$.
We need to show that
\[
\sum\limits_{l=0}^{h} \beta_1(n, m+2l, v-h) \; \beta(n,m,v-l)= 0.
\] 

We apply Mathematica's {\tt Simplify} function to the summand formula of the equation. The left side becomes
\[
\sum\limits_{l=0}^{h} (-1)^{-h-l-m+n} \frac{2h+m}{(h-l)!} {{m + 2l} \choose l} (m + 2l +1)_{h-l-1}
\]
which evaluates to $0$, as required.

The result holds for $h = l+1$. Hence, by the principle of mathematical induction, it holds for all $l: 0 \leq l \leq \frac{n - m}2$, as required.
\end{proof}

\begin{remark}\rm
We give a non-algorithmic proof of the induction step of the proof of Theorem~\ref{beta-RM} in appendix~\ref{appfor6}.
\end{remark}

The coefficient function $\beta(n, m, k)$ of equation~(\ref{RM-desc}) in Theorem~\ref{beta-RM} provides the mapping from monomials to Zernike Radial polynomials in ascending representation. Specifically, this is
\begin{equation}
x^m = \sum\limits_{k=0}^{\frac{n-m}2} \beta(n, m, k) R_n^{n-2k}(x).
\end{equation}

For example,
\begin{align*}
x^4 =& \beta(8, 4, 0) R_8^8(x) + \beta(8, 4, 1) R_8^6(x) + \beta(8, 4, 2) R_8^4(x)\\
=& \frac{4}3 x^8 - \frac25 (8x^8 - 7x^6) + \frac1{15} (28 x^8 - 42 x^6 + 15 x^4)\\
\\
x^4 =& \beta(6, 4, 0) R_6^6(x) + \beta(6, 4, 1) R_6^4(x)\\
=& \frac65 x^6 - \frac15 (6 x^6 - 5x^4)\\
\\
x^3 =& \beta(9, 3, 0) R_9^9(x) + \beta(9, 3, 1) R_9^7(x) + \beta(9, 3, 2) R_9^5(x) + \beta(9, 3, 3) R_9^3(x)\\
=& \frac32 R_9^9(x) - \frac7{10} R_9^7(x) + \frac14 R_9^5(x) - \frac1{20} R_9^3(x).
\end{align*}

\part{Composing Coefficient Functions}

From Theorem~\ref{tri-inv}, for every $v,t \in S$, each change of basis matrix $M_{vt}$ has one of the forms in figures Figures~\ref{MDesNP} -- \ref{MDAscP}, depending on whether which of the four kinds of bases are those of the bases in $S$.

If change of basis matrices satisfy the change of basis equation
\begin{equation} \label{CoB}
M_{vt} = M_{vr} M_{r t}
\end{equation} then $M_{vr}$ and $M_{rt}$ could be upper and lower triangular matrices, or vice versa. Their product is not necessarily a triangular matrix.

Given the coefficient functions for $M_{vr}$ and $M_{r t}$, we consider defining the coefficient function for $M_{vt}$ from them.
In general, if $v, r$ and $t$ do not have definite parity, then we call $\alpha$ the coefficient of $M_{vt}$, $\alpha_1$ the coefficient function of $M_{vr}$, and $\alpha_2$ that of $M_{rt}$. If $v, r$ and $t$ have definite parity, we call $\beta$ the coefficient of $M_{vt}$, $\beta_1$ the coefficient function of $M_{vr}$, and $\beta_2$ that of $M_{rt}$. 

\section{Descending Bases without Definite Parity}
The composition of $\alpha_1$ and $\alpha_2$ to form $\alpha$ is based on equation (6) of Wolfram~\cite{wolfram2107} which is for the special case where $m=0$:
\begin{equation} \label{DND}
\alpha(n,m,k) = \sum\limits_{v=0}^k \alpha_1(n-v,m,k-v) \; \alpha_2(n,m,v)
\end{equation} where $0 \leq k \leq n-m$.

This is the dot product of the row with index $n-m-k$ of $M_{vr}$ with the last column of $M_{rt}$. The row and column indices of these matrices are $0 \leq i,j \leq n-m$.

Examples of this include compositions of coefficient functions for mappings between bases of classical orthogonal polynomials that do not have definite parity. For these polynomials, $m=0$. They include Chebyshev polynomials of the third kind $V_n(x)$, generalized Laguerre polynomials $L^{(\alpha)}_n(x)$, and shifted Legendre polynomials $P^{\ast}_n(x)$. Wolfram~\cite{wolfram2108} gives coefficient functions for mappings between these polynomials and the monomials.

A specific example is where $t = \{P^{\ast}_4(x)_{4,4}, P^{\ast}_5(x)_{5,4}, P^{\ast}_6(x)_{6,4}, P^{\ast}_7(x)_{7,4}\}$ of truncated shifted Legendre polynomials, $r =\{x^4, x^5, x^6, x^7\}$ and 
\[v = \{b_4^4(x), b_4^5(x), b_4^6(x), b_4^7(x)\}
\] which is a descending basis of Bernstein polynomials.

The coefficient function for $M_{rt}$ is based on Wolfram\cite[equation (36)]{wolfram2108}:
\[
\alpha_2(n,m, k) = 2^{-k} \sum\limits_{v=0}^{\lfloor \frac{k}2 \rfloor} {{n-2v} \choose {k-2v}} {{2n-2v}\choose n} {n \choose v} (-1)^{k-v}.
\]
where $0 \leq k \leq n-m$.
The coefficient function for $M_{vr}$ is from equation~(\ref{adesc}).
\[
\alpha_1(n, m, k) =(-1)^{n- m -k} \frac{{n \choose k}}{{n \choose m}}
\] where $0 \leq k \leq n-m$.
Applying equation~(\ref{DND}) gives the coefficient function for $M_{vt}$. Equation~(\ref{mDND}) for its elements is
\[
m_{i,j} = 
\left\{
\begin{array}{ll} 
\alpha(m+j, m, j-i ) & \mbox{if $j \geq i$}\\
0 & \mbox{if $j< i$}
\end{array}
\right.
\] where $0 \leq i,j \leq n-m$.

We obtain,
\begin{center}
\begin{doublespace}
$M_{vt} =$
\noindent\(\left[
\begin{array}{cccc}
70 & -378 & 1302 & -3498 \\
0 & -\frac{252}{5} & \frac{924}{5} & -\frac{2904}{5} \\
0 & 0 & \frac{308}{5} & -\frac{572}{5} \\
0 & 0 & 0 & -\frac{3432}{35} \\
\end{array}
\right]\)
\end{doublespace}
\end{center}

From the third column, we have
\begin{align*}
P_6^{\ast}(x)_{6, 4}
=& \frac{308}5 b^6_4(x) +\frac{924}5 b^5_4(x) +1302 b_4^4(x)\\
=&(924 x^6 -1848 x^5 + 924 x^4) + (-924 x^5 + 924 x^4) +1302 x^4\\
=& 924 x^6 -2772 x^5 + 3150 x^4.
\end{align*}

\section{Ascending Bases without Definite Parity}

The composition is found the dot product of the row with index $n-m-k$ of $M_{vr}$ with the first column of $M_{rt}$:
\begin{equation} \label{AND}
\alpha(n,m,k) = \sum\limits_{v=0}^{n-m-k} \alpha_1(n, m+v,k) \; \alpha_2(n,m,n-m-v)
\end{equation} where $0 \leq k \leq n-m$.

An example of this is the mapping from a basis of truncated Chebyshev polynomials of the third kind $t = \{V_6(x)_{6, 3}, V_6(x)_{6,4}, V_6(x)_{6,5}, V_6(x)_{6,6}\}$ to $v = \{b_3^6(x), b_4^6(x), b_5^6(x), b_6^6(x)\}$, where $r = \{x^3, x^4, x^5, x^6\}$, $n=6$ and $m=3$.

We have 
\begin{align*}
V_6(x) =& 64 x^6 - 32 x^5 - 80 x^4 + 32x^3 + 24 x^2 - 6x - 1\\
=& U_6(x) - U_5(x).
\end{align*}

A coefficient function for the mapping from Chebyshev polynomials of the third kind to the monomials is given by Wolfram~\cite[equation (23)]{wolfram2108}:
\[
\alpha(n,k) = \frac{2^{n}}{{{2n} \choose n}} \frac{(-1)^{k}}{ (n-k)!} \sum_{l=0}^{k} 2^l \frac{(1 +n)_{n-l}(\frac{1}{2} - n)_{l}}{(k-l)! \; l!}. 
\]

From equation~(\ref{NDtruncalpha}), we have $\alpha_2(n, m, k) = \alpha(n, k)$ where $0 \leq k \leq n -m$. from equation~(\ref{invB1a}) we have
\[
\alpha_1(n,m,k) = \frac{{{n-m} \choose k}}{{n \choose k}}
\] for the mapping from $r$ to $v$.

Applying equation~(\ref{AND}) gives the required coefficient function,
\[
\alpha(n,m,k) = \sum\limits_{v=0}^{n-m-k} \alpha_1(n, m+v,k) \; \alpha_2(n,m,n-m-v)
\] and we obtain the change of basis matrix by using equation~(\ref{mAND}) for its elements: 
\[
m_{i,j} = 
\left\{
\begin{array}{ll} 
\alpha(m+j, m, j-i ) & \mbox{if $j \geq i$}\\
0 & \mbox{if $j< i$}
\end{array}
\right.
\] where $0 \leq i,j \leq n-m$.

\begin{center}
\begin{doublespace}
$M_{vt} =$
\noindent\(\left[
\begin{array}{cccc}
\frac{8}{5} & 0 & 0 & 0 \\
\frac{16}{15} & -\frac{16}{3} & 0 & 0 \\
-16 & -32 & -\frac{16}{3} & 0 \\
-16 & -48 & 32 & 64 \\
\end{array}
\right]\)
\end{doublespace}
\end{center}

From the second column,
\begin{align*}
V_6(x)_{6, 4} 
=& -48 b^6_6(x) - 32 b^6_5(x) - \frac{16}3 b^6_4(x)\\
=& -48 x^6 -32(-6x^6 + 6x^5) - \frac{16}3 (15 x^6 -30 x^5 +15 x^4)\\
=& 64 x^6 - 32 x^5 - 80 x^4.
\end{align*}

\section{Descending Bases with Definite Parity}
The composition of $\beta_1$ and $\beta_2$ to form $\beta$ is based on Wolfram~\cite{wolfram2107,wolfram2108} which is for the special case where $m=0$:
\begin{equation} \label{DDD-comp}
\beta(n,m,k) = \sum\limits_{v=0}^{\frac{n-m}2} \beta_1(n -2v, m, k-v) \; \beta_2(n,m,v)
\end{equation} where $0 \leq k \leq \frac{n-m}2$.

For example, let $t =\{R_3^3(x), R_5^3(x), R_7^3(x), R_9^3(x)\}$, $r = \{x^3, x^5, x^7, x^9\}$ and $v = \{T_3(x)_{3,3}, T_5(x)_{5, 3}, T_7(x)_{7,3}, T_9(x)_{9,3}\}$. These are all bases of the same vector space.

From equation~(\ref{MR}), 
\begin{gather*}
M_{rt} =
\begin{bmatrix}
1 & -4 & \phantom{-}10& -20 \\
\\
0 & \phantom{-}5 & -30& \phantom{-}105\\
\\
0 & 0 & \phantom{-}21& -168 \\
\\
0& 0 & 0& \phantom{-}84
\end{bmatrix}
\end{gather*}

From \cite[equation (2.14) ]{handscomb}, we have
\begin{gather*}
M_{vr} =
\begin{bmatrix}
\frac1{4} & \frac5{16} & \frac{21}{64} & \frac{21}{64} \\
\\
0 & \frac1{16}& \frac7{64} & \frac9{64}\\
\\
0 & 0 & \frac1{64}& \frac{9}{256} \\
\\
0& 0 & 0&\frac1{256} 
\end{bmatrix}
\end{gather*}

Therefore,
\begin{gather*}
M_{vt} =
\begin{bmatrix}
\frac14 & \frac{9}{16} & \frac{1}{64}& \frac{1}{4} \\
\\
0 & \frac{5}{16} & \frac{27}{64} & 0\\
\\
0 & 0 & \frac{21}{64}& \frac{21}{64} \\
\\
0& 0 & 0& \frac{21}{64}
\end{bmatrix}
\end{gather*}

From the fourth column of $M_{vt}$, we have
\begin{align*}
R_9^3 (x) =& \frac{21}{64} T_9(x)_{9,3} + \frac{21}{64}T_7(x)_{7,3} + \frac1{4}T_3(x)_{3,3}\\
=& \frac{21}{64} (256 x^9 - 576 x^7 + 432 x^5 - 120x^3) + \frac{21}{64}(64x^7 - 112 x^5 + 56 x^3) +  x^3\\
=& 84x^9 - 168 x^7 + 105 x^5 - 20 x^3.
\end{align*}

Other examples of this composition are for coefficient functions for mappings between bases of classical orthogonal polynomials that have definite parity. For these polynomials, $m=0$ or $m=1$. They include Chebyshev polynomials of the second kind $U_n(x)$, Legendre polynomials $P_n(x)$, and Hermite polynomials $H_n(x)$. Wolfram~\cite{wolfram2108} gives coefficient functions for mappings between these polynomials and the monomials.

\section{Ascending Bases with Definite Parity}
In this case, the composition is similar to equation~(\ref{AND})
\begin{equation} \label{AD}
\beta(n,m,k) = \sum\limits_{v=0}^{\frac{n-m}2-k} \beta_1(n, m+2v,k) \; \beta_2(n,m,\frac{n-m}2 -v)
\end{equation} where $0 \leq k \leq \frac{n-m}2$.

An example is  $t= \{R_9^3(x), R_9^5(x), R_9^7(x), R_9^9(x) \}$,  $r = \{x^3, x^5, x^7, x^9\}$ and $v = 
\{H_9(x)_{9,3},H_9(x)_{9,5},H_9(x)_{9,7}, H_9(x)_{9,9}\}$, i.e., an ascending basis of truncated Physicist's Hermite polynomials where $n =9$.

The coefficient function for $M_{rt}$ is given by equation~(\ref{MR})
\[
\beta_2(n, m, k) = {{n-k} \choose k}{{n-2k} \choose {\frac{n-m}{2} - k}} (-1)^k 
\] where $0 \leq k \leq \frac{n-m}2$

The coefficient function for $M_{vr}$ is derived from Koornwinder et al.~\cite[equation 18.5.13]{dlmf} and Lemma~\ref{truncband}:
\[
\beta_1(n, m, k) = \left\{
\begin{array}{ll} 
(-1)^{k} 2^{2k-n} \frac{k!(n-2k)!}{n!} & \mbox{if $k = \frac{n-m}2$}\\
\\
-\beta_1(n,m,k+1) & \mbox{if $k = \frac{n-m}2 -1$}\\
\\
0 & \mbox{otherwise.}
\end{array}
\right.
\]

Applying equation~(\ref{AD}) gives the required coefficient function $\beta(n, m, k)$. The elements of $M_{vt}$ are given by 
the equation~(\ref{mAD}):
\[
m_{i,j} = 
\left\{
\begin{array}{ll} 
\beta(n, m+2j, l- i ) & \mbox{if $i \geq j$}\\
0 & \mbox{if $i < j$}
\end{array}
\right.
\] where $0 \leq i,j \leq 3$.
This gives
\begin{center}
\begin{doublespace}
$M_{vt} =$
\(\left[
\begin{array}{cccc}
\frac{1}{4032} & 0 & 0 & 0 \\
\frac{31}{16128} & \frac{1}{2304} & 0 & 0 \\
\frac{37}{2304} & \frac{13}{2304} & \frac{1}{1152} & 0 \\
\frac{7}{48} & \frac{37}{576} & \frac{77}{4608} & \frac{1}{512} \\
\end{array}
\right]\)
\end{doublespace}
\end{center}
From the second column of $M_{vt}$, we have $R_9^5 (x)$
\begin{align*}
=& \frac{37}{576} H_9(x)_{9,9} + \frac{13}{2304}H_9(x)_{9,7} + \frac1{2304}H_9(x)_{9,5}\\
=& \frac{37}{576} 512 x^9 + \frac{13}{2304}(512 x^9 - 9216x^7) + \frac1{2304}(512x^9 - 9216x^7 + 48384x^5)\\
=& 36 x^9 - 56 x^7 + 21 x^5.
\end{align*}

\section{Ascending and Descending Bases}
If $M_{vr}$ is a change of basis matrix for ascending bases and $M_{rt}$ is one for descending bases, or vice versa, then $r$ is the subset of the monomials that spans the same vector space as $v$ and $t$. In general, $M_{vt}$ is not a triangular matrix.

If $v,r$ and $t$ do not have definite parity, and $v$ is an ascending basis then the composition of $\alpha_1$ and $\alpha_2$ is given by
\begin{equation} \label{compDA}
\alpha(n,l,k) = \sum\limits_{v=0}^{\min\{n-m-k, l-m\}} \alpha_1(n, m+v,k) \; \alpha_2(l,m,l-m-v)
\end{equation} where $0 \leq j, k \leq n-m$ and $l = m+j$. This is the dot product of the row with index $i = n-m-k$ of $M_{vr}$ with the column with index $j$ of $M_{rt}$. The elements of $M_{vt}$ are 
\begin{equation} \label{mADN}
m_{i, j} = \alpha(n, m+j, n-m-i).
\end{equation}
\begin{figure}[ht]
\resizebox{\textwidth}{!}{%
$\displaystyle
\left[
\begin{array}{rrrrrr}
\alpha(n,m, n-m) & \cdots & \alpha(n, n-3, n-m) & \alpha(n,n-2, n-m) & \alpha(n, n-1, n-m) & \alpha(n,n, n-m)\\
\vdots & \vdots & \vdots & \vdots & \vdots & \vdots \\
\alpha(n,m, 3) & \cdots & \alpha(n, n-3, 3) & \alpha(n, n-2, 1) & \alpha(n, n-1, 3)& \alpha(n ,n, 3)\\
\alpha(n,m, 2) &\cdots & \alpha(n, n-3, 2) & \alpha(n, n-2, 2) & \alpha(n, n-1, 2)& \alpha(n, n, 2)\\
\alpha(n,m, 1) & \cdots & \alpha(n, n-3, 1) & \alpha(n, n-2, 1) & \alpha(n, n-1, 1)& \alpha(n, n, 1)\\
\alpha(n,m, 0) & \cdots &\alpha(n, n-3, 0) & \alpha(n, n-2, 0) & \alpha(n, n-1, 0) & \alpha(n, n, 0)
\end{array} 
\right]
$}
\caption{$M_{vt}$ for a Descending to Ascending Basis Without Definite Parity} \label{MDesAscNP}
\end{figure}
\medskip

An example of this form of coefficient function is discussed in detail by Farouki~\cite{farouki2000} for the mapping between shifted Legendre polynomials and an ascending basis of Bernstein polynomials. We consider this mapping in \S \ref{BLst} below.

If $v,r$ and $t$ do not have definite parity, and $v$ is a descending basis then the composition of $\alpha_1$ and $\alpha_2$ is given by
\begin{equation} \label{compAD}
\alpha(l, m, k) = \sum\limits_{v=0}^{\min\{k, n-l\}} \alpha_1(n-v, m,k-v) \; \alpha_2(n, l, v )
\end{equation} where $0 \leq j, k \leq n-m$ and $l = m+j$. This is the dot product of the row with index $i = n-m-k$ of $M_{vr}$ with the column with index $j$ of $M_{rt}$. The elements of $M_{vt}$ are 
\begin{equation}\label{mDAN}
m_{i, j} = \alpha(m+j, m, k).
\end{equation}
\begin{figure}[ht]
\resizebox{\textwidth}{!}{%
$\displaystyle
\left[
\begin{array}{rrrrrr}
\alpha(m,m, n-m) & \cdots & \alpha(n-3, m, n-m) & \alpha(n-2, m, n-m) & \alpha(n-1, m, n-m) & \alpha(n,m, n-m)\\
\vdots & \vdots & \vdots & \vdots & \vdots & \vdots \\
\alpha(m,m, 3) & \cdots & \alpha(n-3, m, 3) & \alpha(n-2, m, 1) & \alpha(n-1, m, 3)& \alpha(n ,m, 3)\\
\alpha(m,m, 2) &\cdots & \alpha(n-3, m, 2) & \alpha(n-2, m, 2) & \alpha(n-1, m, 2)& \alpha(n, m, 2)\\
\alpha(m,m, 1) & \cdots & \alpha(n-3, m, 1) & \alpha(n-2, m, 1) & \alpha(n-1, m, 1)& \alpha(n, m, 1)\\
\alpha(m,m, 0) & \cdots &\alpha(n-3, m, 0) & \alpha(n-2, m, 0) & \alpha(n-1, m, 0) & \alpha(n, m, 0)
\end{array} 
\right]
$}
\caption{$M_{vt}$ for an Ascending to Descending Basis Without Definite Parity} \label{MAscDesNP}
\end{figure}
\medskip

If these bases have definite parity, and $v$ is an ascending basis this equation becomes
\begin{equation} \label{compDAD}
\beta(n,l,k) = \sum\limits_{v=0}^{\min\{\frac{n-m}2 -k, \frac{l-m}2\}} \beta_1(n, m+2v,k) \; \beta_2(l,m, \frac{l-m}2-v)
\end{equation} where $0 \leq j, k \leq \frac{n-m}2$ and $l= m + 2j$. This is the dot product of the row with index $i = \frac{n-m}2-k$ of $M_{vr}$ with the column with index $j$ of $M_{rt}$. The elements of $M_{vt}$ are 
\begin{equation} \label{mADD}
m_{i, j} = \beta(n, m+2j, k).
\end{equation}
\begin{figure}[ht]
\resizebox{\textwidth}{!}{%
$\displaystyle
\left[
\begin{array}{rrrrrr}
\beta(n,m, \frac{n-m}2) & \cdots & \beta(n, n-6, \frac{n-m}2) & \beta(n,n-4, \frac{n-m}2) & \beta(n, n-2, \frac{n-m}2) & \beta(n,n, \frac{n-m}2)\\
\vdots & \vdots & \vdots & \vdots & \vdots & \vdots \\
\beta(n,m, 3) & \cdots & \beta(n, n-6, 3) & \beta(n, n-4, 1) & \beta(n, n-2, 3)& \beta(n ,n, 3)\\
\beta(n,m, 2) &\cdots & \beta(n, n-6, 2) & \beta(n, n-4, 2) & \beta(n, n-2, 2)& \beta(n, n, 2)\\
\beta(n,m, 1) & \cdots & \beta(n, n-6, 1) & \beta(n, n-4, 1) & \beta(n, n-2, 1)& \beta(n, n, 1)\\
\beta(n,m, 0) & \cdots &\beta(n, n-6, 0) & \beta(n, n-4, 0) & \beta(n, n-2, 0) & \beta(n, n, 0)
\end{array} 
\right]
$}
\caption{$M_{vt}$ for a Descending to Ascending Basis With Definite Parity} \label{MDesAscP}
\end{figure}
\medskip

If $v,r$ and $t$ have definite parity, and $v$ is a descending basis then 
\begin{equation} \label{ADP}
\beta(l, m, k) = \sum\limits_{v=0}^{\min\{k, \frac{n-l}2\}} \beta_1(n-2v, m,k-v) \; \beta_2(n,l, v )
\end{equation} where $0 \leq j, k \leq \frac{n-m}2$ and $l = m+2j$. This is the dot product of the row with index $i = \frac{n-m}2-k$ of $M_{vr}$ with the column with index $j$ of $M_{rt}$. The elements of $M_{vt}$ are 
\begin{equation} \label{mDAP}
m_{i, j} = \beta(m+2j, m, k).
\end{equation}
\begin{figure}[H]
\resizebox{\textwidth}{!}{%
$\displaystyle
\left[
\begin{array}{rrrrrr}
\beta(m,m, \frac{n-m}2) & \cdots & \beta(n-6, m, \frac{n-m}2) & \beta(n-4, m, \frac{n-m}2) & \beta(n-2, m, \frac{n-m}2) & \beta(n,m, \frac{n-m}2)\\
\vdots & \vdots & \vdots & \vdots & \vdots & \vdots \\
\beta(m,m, 3) & \cdots & \beta(n-6, m, 3) & \beta(n-4, m, 1) & \beta(n-2, m, 3)& \beta(n ,m, 3)\\
\beta(m,m, 2) &\cdots & \beta(n-6, m, 2) & \beta(n-4, m, 2) & \beta(n-2, m, 2)& \beta(n, m, 2)\\
\beta(m,m, 1) & \cdots & \beta(n-6, m, 1) & \beta(n-4, m, 1) & \beta(n-2, m, 1)& \beta(n, m, 1)\\
\beta(m,m, 0) & \cdots &\beta(n-6, m, 0) & \beta(n-4, m, 0) & \beta(n-2, m, 0) & \beta(n, m, 0)
\end{array} 
\right]
$}
\caption{$M_{vt}$ for an Ascending to Descending Basis With Definite Parity} \label{MAscDesP}
\end{figure}

The mapping from truncated Chebyshev polynomials of the first kind to Chebyshev polynomials of the first kind provides an example.
Suppose that $t =\{T_7(x)_{7,1}, T_7(x)_{7, 3}, T_7(x)_{7,5}, T_7(x)_{7,7}\}$ and $v = \{T_1(x), T_3(x), T_5(x), T_7(x)\}$. It follows that 
\begin{center}
\begin{doublespace}
\( M_{vt} = \left[
\begin{array}{cccc}
0 & 7 & -35 & 35 \\
0 & 0 & -14 & 21 \\
0 & 0 & 0 & 7 \\
1 & 1 & 1 & 1 \\
\end{array}
\right]\)
\end{doublespace}
\end{center}
From the third column, we have,
\begin{align*}
T_7(x)_{7, 5} =& T_7(x) - 14 T_3(x) - 35 T_1(x)\\
=& (64x^7 -112 x^5 +56 x^3 -7x) -14 (4x^3 - 3x) -35x\\
=& 64x^7 -112 x^5.
\end{align*}

The coefficient functions $\beta_1$ and $\beta_2$ were summarized in Wolfram~\cite{wolfram2107} from Abramowitz and Stegun~\cite[equations 22.4.4 and 22.3.6]{as} and Mason and Handscomb~\cite[equation (2.14)]{handscomb}. In this example, $n=7$, $m=1$ and $0 \leq i, j \leq 3$. For $\beta_2$ for the mapping from $t$ to $\{x, x^3, x^5, x^7\}$, the function has no dependence on $m$. From equation~(\ref{ADP}), we have

\begin{equation} 
\beta(l, m, k) = \sum\limits_{v=0}^{\min\{\frac{n-l}2, k\}} (-1)^v \frac{n}{n-v} {{n-v} \choose k} {k \choose v}
\end{equation} where $0 \leq j, k \leq \frac{n-m}2$ and $n$ is odd. Let $V = \min\{\frac{n-l}2, k\}$ where $l = m+2j$. By applying Gosper's algorithm, we can find that 
\begin{equation}
\beta(l, m, k) =
\left\{
\begin{array}{ll} 
1 & \mbox{if $k = 0$}\\
0 & \mbox{if $k = V$ and $k > 0$}\\
(-1)^V n ( \frac1{V-n} - \frac{V}{k^2 - kn} ) {{n-V} \choose k} {k \choose V} & \mbox{if $k \not=V$}.
\end{array}
\right.
\end{equation}
\section{Summary}
Table \ref{CoBM} summarises eight kinds of products of change of basis matrices. They depend on whether the domain or range basis is an ascending or descending basis, and the parity of these bases. 
In the table, $U$ means an upper-triangular matrix, $L$ means a lower-triangular matrix, $L$, an $M$ means an invertible matrix.

\begin{table}[H]
\centering
\resizebox{\textwidth}{!}{%
\begin{tabular}{|l|l|l|c|c|c|c|}\hline 
{\bf Domain} & {\bf Range} & {\bf Parity} & {\bf Matrix} & {\bf Figure} & {\bf Elements} & {\bf Composition}\\ \hline
%&&&&&\\
Descending & Descending & Not Definite & U& \ref{MDesNP} & (\ref{mDND}) & (\ref{DND}) \\ \hline
Ascending & Ascending & Not Definite & L& \ref{MDAscNP} & (\ref{mAND}) & (\ref{AND}) \\ \hline
Descending & Descending & Definite & U& \ref{MDesP} & (\ref{mDD}) & (\ref{DDD-comp}) \\ \hline
Ascending & Ascending & Definite & L & \ref{MDAscP} & (\ref{mAD}) & (\ref{AD}) \\ \hline
Descending & Ascending & Not Definite & M& \ref{MDesAscNP} & (\ref{mADN}) & (\ref{compDA}) \\ \hline
Ascending & Descending & Not Definite & M & \ref{MAscDesNP} & (\ref{mDAN}) & (\ref{compAD}) \\ \hline
Descending & Ascending & Definite & M& \ref{MDesAscP} & (\ref{mADD}) & (\ref{compDAD}) \\ \hline
Ascending & Descending & Definite & M& \ref{MAscDesP} & (\ref{mDAP}) & (\ref{ADP}) \\ \hline
\end{tabular}

\caption{Products of Change of Basis Matrices and Compositions of Coefficient Functions} \label{CoBM}

}
\end{table}

\section{Examples with Ascending and Descending Bases} 
\label{BLst}

We consider two examples from the literature. The first has a mapping from a descending basis to an ascending basis. The second has two descending bases.

\subsection{Shifted Legendre Polynomials to Bernstein Polynomials}
Farouki~\cite{farouki2000} gave results for change of basis matrices between shifted Legendre polynomials and an ascending basis of Bernstein polynomials using a different method.

Instead, by multiplying change of basis matrices, an example where $n = 5$ is
\begin{doublespace}
\[\left[
\begin{array}{cccccc}
1 & 0 & 0 & 0 & 0 & 0 \\
1 & \frac{1}{5} & 0 & 0 & 0 & 0 \\
1 & \frac{2}{5} & \frac{1}{10} & 0 & 0 & 0 \\
1 & \frac{3}{5} & \frac{3}{10} & \frac{1}{10} & 0 & 0 \\
1 & \frac{4}{5} & \frac{3}{5} & \frac{2}{5} & \frac{1}{5} & 0 \\
1 & 1 & 1 & 1 & 1 & 1 \\
\end{array}
\right] \left[
\begin{array}{cccccc}
1 & -1 & 1 & -1 & 1 & -1 \\
0 & 2 & -6 & 12 & -20 & 30 \\
0 & 0 & 6 & -30 & 90 & -210 \\
0 & 0 & 0 & 20 & -140 & 560 \\
0 & 0 & 0 & 0 & 70 & -630 \\
0 & 0 & 0 & 0 & 0 & 252 \\
\end{array}
\right] =
\]
\[
\left[
\begin{array}{cccccc}
1 & -1 & 1 & -1 & 1 & -1 \\
1 & -\frac{3}{5} & -\frac{1}{5} & \frac{7}{5} & -3 & 5 \\
1 & -\frac{1}{5} & -\frac{4}{5} & \frac{4}{5} & 2 & -10 \\
1 & \frac{1}{5} & -\frac{4}{5} & -\frac{4}{5} & 2 & 10 \\
1 & \frac{3}{5} & -\frac{1}{5} & -\frac{7}{5} & -3 & -5 \\
1 & 1 & 1 & 1 & 1 & 1 \\
\end{array}
\right]\]
\end{doublespace}

The unsolved problem discussed there~\cite{farouki2000} is to find a ``closed form'' representation for the elements of the change of basis matrix and its inverse in terms of $j$, $k$ and $n$. Equation (20) of Farouki~\cite{farouki2000}, for the elements of the change of basis matrix has a sum over three binomials:
\[
\frac{1}{{n \choose j}} \sum_{ i = \max \{0, j+k-n\}}^{\min\{j,k\}} (-1)^{k+i} {k \choose i} {k \choose i} {{n-k} \choose {j-i}}.
\]

We show using the techniques above that the corresponding coefficient function can be expressed as an indefinite hypergeometric sum in equation (\ref{alpha1}). We assume that ``indefinite'' means that the lower bound of the sum is a known element of $\mathbb{N}_0$ and the upper bound can be a variable or other expression.
From Lemma~\ref{closed-alpha}, Gosper's Algorithm~\cite{paulezbf} does not find a closed form for this sum which provides a solution to one part of the open problem. This assumes that if the sum has a required closed form expression, the expression is one that can be  produced by Gosper's Algorithm.

A hypergeometric series for this coefficient function was also given~\cite{farouki2000}, however it is only defined in the upper skew triangle. We give another hypergeometric series for the remaining elements and derive a hypergeometric series that is more general simpler than them.

\smallskip

The shifted Legendre polynomials can be defined~\cite[equation 18.7.10]{dlmf} by
\[
P^{\ast}_n(x) = P_n(2x -1)
\] where $P_n(x)$ is a Legendre polynomial and $n \geq 0$. The shifted Legendre polynomials do not have definite parity.

The coefficient function from the shifted Legendre polynomials to the monomials is given by
\begin{equation} \label{basealpha}
\alpha(n, k) = 2^{-k} \sum\limits_{v=0}^{\lfloor \frac{k}2 \rfloor} {{n-2v} \choose {k-2v}} {{2n-2v}\choose n} {n \choose v} (-1)^{k-v}.
\end{equation}
This is equation (36) of Wolfram~\cite{wolfram2108}. Since the minimum degree of every shifted Legendre polynomial is $0$, the second parameter has been omitted. In this context, we use $\alpha(n, 0,k)$. 
\begin{lemma}
The coefficient function of equation~(\ref{basealpha}) is equal to
\begin{equation} \label{alphasimplified}
\alpha(n, 0,k) = (-1)^k \frac{(2n -k)!}{k! ((n-k)!)^2}.
\end{equation}
\end{lemma}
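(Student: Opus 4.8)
The plan is to recognize both expressions as the coefficient of $x^{n-k}$ in the monomial expansion of the shifted Legendre polynomial $P_n^*(x)$, obtained from two different standard representations of $P_n^*$. First I would simplify the target closed form: since $\frac{(2n-k)!}{k!\,((n-k)!)^2}=\binom{n}{k}\binom{2n-k}{n}$, the right-hand side of~(\ref{alphasimplified}) is $(-1)^k\binom{n}{k}\binom{2n-k}{n}$. By Definition~\ref{parcoeff} with $m=0$, together with the discussion preceding equation~(\ref{basealpha}), the quantity $\alpha(n,0,k)$ is exactly the coefficient of $x^{n-k}$ in $P_n^*(x)$. Indeed equation~(\ref{basealpha}) arises from the explicit Legendre formula $P_n(y)=2^{-n}\sum_{v}(-1)^v\binom{n}{v}\binom{2n-2v}{n}\,y^{n-2v}$ under the substitution $y=2x-1$: expanding $(2x-1)^{n-2v}$ and extracting $x^{n-k}$ contributes $\binom{n-2v}{k-2v}2^{n-k}(-1)^{k}$, and the factor $2^{-n}2^{n-k}=2^{-k}$ reproduces~(\ref{basealpha}). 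So it suffices to show that this coefficient equals $(-1)^k\binom{n}{k}\binom{2n-k}{n}$.

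Second, I would extract the same coefficient from the coefficient-transparent representation of $P_n^*$. Starting from the Gauss hypergeometric form $P_n(y)=\sum_{j=0}^n\binom{n}{j}\binom{n+j}{j}\bigl(\tfrac{y-1}{2}\bigr)^j$ and using $y=2x-1$, so that $\tfrac{y-1}{2}=x-1$, one gets $P_n^*(x)=\sum_{j=0}^n\binom{n}{j}\binom{n+j}{j}(x-1)^j$, which rearranges into the monomial form $P_n^*(x)=\sum_{j=0}^n(-1)^{\,n-j}\binom{n}{j}\binom{n+j}{j}x^j$ (this is readily checked for small $n$, e.g. $P_2^*(x)=6x^2-6x+1$). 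Setting $j=n-k$ and using $\binom{n}{n-k}=\binom{n}{k}$, $\binom{2n-k}{n-k}=\binom{2n-k}{n}$, and $(-1)^{\,n-(n-k)}=(-1)^k$, the coefficient of $x^{n-k}$ is $(-1)^k\binom{n}{k}\binom{2n-k}{n}$. Equating the two evaluations of this single coefficient yields the lemma.

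The genuine analytic content is the passage from the $v$-sum in~(\ref{basealpha}) to the product in~(\ref{alphasimplified}); in this plan that passage is absorbed entirely into the existence of a second, coefficient-transparent representation of $P_n^*$, so the only real obstacle is careful bookkeeping between the two indexings ($x^{n-k}$ versus $x^{j}$) and the associated sign conventions. If instead one wanted a self-contained proof of the underlying single-variable identity $\sum_{v}(-1)^v\binom{n}{v}\binom{2n-2v}{n}\binom{n-2v}{k-2v}=2^k\binom{n}{k}\binom{2n-k}{n}$, I would appeal to creative telescoping (Zeilberger's algorithm), in keeping with the computational style used elsewhere in this paper: as functions of $k$ for fixed $n$, both sides satisfy a common first-order recurrence and agree at $k=0$. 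The main obstacle on that route would be verifying that the recurrence certificate produced for the $v$-sum coincides with the one satisfied by the closed form, a routine but uninformative check; for this reason I would present the coefficient-comparison argument as the primary proof.
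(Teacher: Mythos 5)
Your route is genuinely different from the paper's and is viable. The paper's proof fixes the parity of $k$, applies Zeilberger's algorithm to the sum in equation~(\ref{basealpha}) to obtain first-order recurrences in $n$ (equations~(\ref{receven}) and~(\ref{recodd})), checks one initial value in each parity class, and verifies that the closed form~(\ref{alphasimplified}) satisfies the same recurrences. You instead identify both expressions as the coefficient of $x^{n-k}$ in $P^{\ast}_n(x)$: equation~(\ref{basealpha}) via the standard monomial expansion of $P_n(y)$ under $y=2x-1$ (this part of your argument is correct, including the signs, since $(-1)^{k+v}=(-1)^{k-v}$), and the closed form via the expansion $P^{\ast}_n(x)=\sum_{j=0}^n(-1)^{n-j}\binom{n}{j}\binom{n+j}{j}x^j$, whose $j=n-k$ term gives $(-1)^k\binom{n}{k}\binom{2n-k}{n}$ exactly as you state. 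This buys a classical, computer-algebra-free proof that also explains \emph{why} the answer collapses to a product of two binomials, whereas the paper's method is mechanical but requires trusting the algorithmic certificates and their side conditions.

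There is, however, one under-justified step, and it sits precisely where the analytic content of the lemma lives: the claim that $\sum_{j}\binom{n}{j}\binom{n+j}{j}(x-1)^j$ ``rearranges'' into $\sum_{j}(-1)^{n-j}\binom{n}{j}\binom{n+j}{j}x^j$. Expanding $(x-1)^j$ and collecting the coefficient of $x^i$ leaves the double-sum identity $\sum_{j}(-1)^{j-i}\binom{n}{j}\binom{n+j}{j}\binom{j}{i}=(-1)^{n-i}\binom{n}{i}\binom{n+i}{i}$, which is a genuine binomial identity of comparable weight to the lemma itself; checking $n=2$ does not establish it. The fix is one line: invoke the parity relation $P_n(-y)=(-1)^nP_n(y)$ first, so that $P^{\ast}_n(x)=P_n(2x-1)=(-1)^nP_n(1-2x)$, and then apply the same Gauss/Murphy formula to $P_n(1-2x)$, whose argument is $\frac{(1-2x)-1}{2}=-x$; this yields the monomial form directly with no double sum (equivalently, cite the standard representation $P_n(y)=(-1)^n\,{}_2F_1\bigl(-n,\,n+1;\,1;\,\tfrac{1+y}{2}\bigr)$). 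With that patch your coefficient-comparison proof is complete and rigorous.
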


\begin{proof}
We use Zeilberger's algorithm with equation~(\ref{basealpha}) to derive recurrences that this sum satisfies, and then verify that equation~(\ref{alphasimplified}) is the solution of the recurrences.

In the case that $k$ is even, 
we apply Zeilberger's algorithm using the Mathematica package `fastZeil' version 3.61~\cite{paulezbf} to the right-hand side of equation~(\ref{basealpha}) with $k$ replaced by $2l$. This gives
\begin{equation} \label{receven}
-2(-1 + 2l -2n)(-1 + l -n)\mbox{ SUM}[n] + (-1 + 2l -n)^2 \mbox{ SUM}[1+n] = 0
\end{equation} provided that $l$ is a natural number and none of $\{-2(l -n), n , -2l +n\}$ is a negative integer.
We have that $-2(l-n) \geq 0$ because $n \geq l$, and $n \geq 2l$ because $n \geq k$.

The initial condition is $\mbox{SUM}[0]$ which is $\alpha(0, 0, 0) = 1$. This is correct because $P^{\ast}_0(x) = 1$ and $P^{\ast}_0(x) = \alpha(0,0,0) . 1$.
Substituting $\alpha(n, 0, 2l)$ from equation~(\ref{alphasimplified}) for $\mbox{SUM}[n]$ in equation~(\ref{receven}) simplifies to $0$.

If $k$ is odd, let $k = 2l +1$. 
Applying Zeilberger's algorithm to the right-hand side of equation~(\ref{basealpha}) with $k$ replaced by $2l +1$ gives
\begin{equation} \label{recodd}
-2(-1 + 2l -2n)( l -n)\mbox{ SUM}[n] + (2l-n)^2 \mbox{ SUM}[1+n] = 0
\end{equation} provided that $l$ is a natural number and none of $\{-2(l -n), n , -2l +n\}$ is a negative integer. These conditions are satisfied. The initial condition is $\mbox{SUM}[1]$ which is $\alpha(1, 0, 1) = -1$. This is correct because $P^{\ast}_1(x) = 2 x -1$ and $P^{\ast}_1(x) = \alpha(1,0,0) . x + \alpha(1,0,1) . 1$.
Substituting $\alpha(n, 0, 2l+1)$ from equation~(\ref{alphasimplified}) for $\mbox{SUM}[n]$ in equation~(\ref{recodd}) simplifies to $0$, as required.
\end{proof}

The coefficient function in equation~(\ref{alphasimplified}) is for the mapping from shifted Legendre polynomials to the monomials. The shifted Legendre polynomials form the descending basis
\[
\{P^{\ast}_0(x), P^{\ast}_1(x), \ldots, P^{\ast}_n(x) \}
\] where $n \geq 0$.

The ascending basis of Bernstein polynomials in this example has the form
\[
\{b_0^n(x), b_1^n(x), \ldots, b_n^n(x)\}.
\] 
The coefficient function from the monomials to these Bernstein polynomials is given in equation~(\ref{invB1}).

These coefficient functions can be composed to give the mapping from the descending basis of shifted Legendre polynomials to the ascending basis of Bernstein polynomials by applying equation~(\ref{compDA}) to give
\begin{equation} \label{alpha1}
\alpha(n, j, k) = \frac{(-1)^j}{{n \choose k}} \sum\limits_{v=0}^{\min\{n-k,j\}} (-1)^{-v} {{n-v} \choose k} \frac{(j+v)!}{(j-v)! ((v!)^2}
\end{equation}

\begin{lemma} \label{alphaM}
$\alpha(n, j, n-i) = M_{i,j}$ where $0 \leq i,j \leq n$.
\end{lemma}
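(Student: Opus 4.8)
The plan is to prove the identity not by transforming the two sums into one another directly, but by recognising that both sides compute the \emph{same} entry of one and the same change of basis matrix, and then invoking uniqueness of coordinate representations. First I would recall that $\alpha(n,j,k)$ of equation~(\ref{alpha1}) was produced by applying the composition rule~(\ref{compDA}) to the shifted Legendre-to-monomial coefficient function and the monomial-to-Bernstein one. Hence, by the element formula~(\ref{mADN}) with $m=0$, the scalar $\alpha(n,j,n-i)$ is exactly the $(i,j)$ entry of the change of basis matrix $M_{vt}$ from the descending basis $t=\{P_0^{\ast}(x),\dots,P_n^{\ast}(x)\}$ to the ascending Bernstein basis $v=\{b_0^n(x),\dots,b_n^n(x)\}$; equivalently, it is the coefficient of $b_i^n(x)$ in the Bernstein expansion of $P_j^{\ast}(x)$.

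Next I would check that Farouki's matrix from equation~(20) of~\cite{farouki2000} carries the same row and column conventions, so that $M_{i,j}$ is likewise the coefficient of $b_i^n(x)$ in $P_j^{\ast}(x)$. The cleanest way to pin this down is the column $j=1$: since $P_1^{\ast}(x)=2x-1$ and $x=\sum_{i=0}^{n}\tfrac{i}{n}\,b_i^n(x)$, the coefficient of $b_i^n(x)$ equals $\tfrac{2i-n}{n}$, which agrees with both $\alpha(n,1,n-i)$ and the entries $M_{i,1}$ of the worked $n=5$ matrix above. Once the orderings of $t$ and $v$ are seen to coincide in the two treatments, both matrices are the coordinate matrix of the same linear map relative to the same pair of ordered bases, and such a matrix is unique. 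Therefore $\alpha(n,j,n-i)=M_{i,j}$ for all $0\le i,j\le n$.

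I expect the only genuine obstacle to be the index bookkeeping in the middle step: confirming that the author's row index really labels $b_i^n(x)$ rather than $b_{n-i}^n(x)$, and that Farouki's $(j,k)$ labelling is transported to $(i,j)$ consistently, so that one and the same scalar is addressed on both sides. Should a reader prefer a self-contained argument that avoids appealing to uniqueness, the alternative is to set $k=n-i$ in~(\ref{alpha1}), use $\binom{n-v}{n-i}=\binom{n-v}{i-v}$ together with $\tfrac{(j+v)!}{(j-v)!(v!)^2}=\binom{j+v}{v}\binom{j}{v}$, and then show the resulting single sum equals Farouki's triple-binomial sum by verifying, via Zeilberger's algorithm, that both satisfy the same first-order recurrence in $n$ with the same initial value. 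On that route the hard part is reconciling the two different summation ranges so that the recurrences and their initial conditions line up; the uniqueness argument sidesteps this entirely.
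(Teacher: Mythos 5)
Your proof is correct, but it takes a genuinely different route from the paper. You argue semantically: by the paper's own construction, equation~(\ref{alpha1}) is the result of applying the composition rule~(\ref{compDA}) to two coefficient functions already proved correct, so $\alpha(n,j,n-i)$ \emph{is} the $(i,j)$ entry of the change of basis matrix from $\{P_0^{\ast}(x),\ldots,P_n^{\ast}(x)\}$ to $\{b_0^n(x),\ldots,b_n^n(x)\}$, i.e.\ the coefficient of $b_i^n(x)$ in $P_j^{\ast}(x)$; Farouki's $M_{i,j}$ is, by his derivation, the same coefficient; uniqueness of coordinates with respect to a basis then forces equality. The paper instead proves the identity symbolically: it applies Zeilberger's algorithm to both expressions, shows that they satisfy the same three-term recurrence~(\ref{reccol}) in the column index (for all four combinations of Farouki's summation limits), and verifies that the two initial columns $j=0$ and $j=1$ agree, whence equality for all $0 \leq i,j \leq n$. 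Each approach buys something. Yours is shorter and conceptual, but it outsources the correctness of Farouki's expression to~\cite{farouki2000}, so the lemma ceases to be an independent cross-check of the two derivations, and the entire burden falls on the convention bookkeeping you flag: your spot check of the column $j=1$ is only a sanity check, and the real justification must come from Farouki's stated definition of his matrix as the map sending Legendre coefficient vectors to Bernstein coefficient vectors, which you should cite explicitly. The paper's route is self-contained (modulo trust in the computer algebra) and, as a by-product, produces the recurrence~(\ref{reccol}) that is reused afterwards to generate closed forms for the matrix column by column and row by row. Note also that your fallback alternative --- reconciling the two sums by a common recurrence and initial values --- is essentially the paper's proof, so you have in effect described both arguments.
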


\begin{proof}
Applying Zeilberger's algorithm to the right-hand side of equation~(\ref{alpha1}) gives the recurrence 
\begin{align} \label{reccol}
&(1+j)(2+j+n)\mbox{SUM}[j] - (3+2j)(2i -n) \mbox{SUM}[1+j] \nonumber\\
&- (2+j) (1+j -n)\mbox{SUM}[2+j] = 0.
\end{align} 
The variable $i$ is the row index and $i = n-k$. Given three adjacent columns in the change of basis matrix if two are known, the elements of the third can be found from the recurrence for elements in row $i$.

When $i$ is used as the upper limit, the conditions found by the algorithm are that $i$ is a natural number and none of $\{n, -i +n\}$ is a negative integer. This is satisfied because $n \geq i$.

When $j$ is the upper limit, the conditions are that $j$ is a natural number and none of $\{n, -2 -j +n\}$ is a negative integer. This is satisfied provided that $0 \leq j \leq n-2$, i.e., there are at least three columns and the recurrence variable does not exceed $n-2$.

Equation (20) of Farouki~\cite{farouki2000}, the variable $j$ is used as the row index, $k$ is used as the column index, and $i$ is used as the summation variable. After renaming these variables by $i$, $j$ and $v$ respectively, we obtain
\[
\frac{1}{{n \choose i}} \sum_{ v = \max \{0, i+j-n\}}^{\min\{i,j\}} (-1)^{j+v} {j \choose v} {j \choose v} {{n-j} \choose {i-v}}.
\]

Applying Zeilberger's algorithm to this expression gives the same recurrence multiplied by $-1$
for all four possible combinations of limits:
\begin{itemize}
\item $v = 0$ to $i$ with the condition $i$ is a natural number
\item $v = 0$ to $j$ with the condition $j$ is a natural number
\item $v = i+j -n$ to $i$ with the condition $-1-j+n$ is a natural number and none of $\{j, n, -2-j+n\}$ is a negative integer
\item $v = i+j-n$ to $j$ with the condition $-i-n$ is a natural number and none of $\{j, n, -2-j+n\}$ is a negative integer.
\end{itemize}

It follows that with the condition that $0 \leq i, j \leq n$ and $ j \leq n-2$, both the right-hand side of equation~(\ref{alpha1}) and that of equation (20) of Farouki~\cite{farouki2000} satisfy the same recurrence.

It remains to show that these equations yield the same element values for two adjacent columns, i.e., the initial conditions of the recurrence for each row. We choose the columns with indices $0$ and $1$. It is straightforward to check that for all $i: 0 \leq i \leq n$, $\alpha(n, 0, n-i) = 1$ and $M_{i, 0} = 1$. 

When $j=1$, we have $\alpha(n, 1, n-i) = \frac{2i}n - 1$, and $M_{i, 1} = \alpha(n, 1, n-i)$.
The equation for $\alpha$ has two cases where the limits on the summation are $v=0$ to $0$ when $i = 0$, and $v = 0$ to $1$ when $i > 0$. The latter follows with three cases where the limits on the summation are $v=0$ to $0$ when $i=0$; $v= 0$ to $1$ when $1 \leq i \leq n-1$; and $v=1$ to $1$ when $i = n$.

Hence, both equations give the same result for the elements of the change of basis matrix where the column index is $0$ or $1$.

It follows that both equations have the same initial conditions and satisfy the recurrence above from Zeilberger's algorithm provided that $0 \leq i, j \leq n$ and $ j \leq n-2$. Hence, they are equal for all $0 \leq i, j \leq n$, as required.
\end{proof}

\subsubsection*{Simplifying the Coefficient Function}

Equation~(\ref{alpha1}) can be simplified by Mathematica to a generalized hypergeometric function
\begin{align}
\alpha(n, j, k) =& (-1)^j \sum_{v=0}^{\min\{n-k,j\}} \frac{(-j)_v (1+j)_v (k-n)_v}{1_v (-n)_v} \frac{1}{v!} \nonumber\\
=& (-1)^j \pFq{3}{2}{-j, 1+j, k-n}{1, -n}{1} 
\end{align}

The change of basis matrix has elements 
\[
m_{i,j} = \alpha(n, j, n-i)
\] where $0 \leq i,j \leq n$, so that

\begin{equation}
m_{j,k} = (-1)^k \pFq{3}{2}{-k, 1+k, -j}{1, -n}{1} \label{pfq0}
\end{equation} where $0 \leq j,k \leq n$ using the index variables of Farouki~\cite{farouki2000}.

\begin{lemma} \label{closed-alpha}
Gosper's Algorithm does not find a closed form for $\alpha(n, j, k)$ of equation~(\ref{alpha1}).
\end{lemma}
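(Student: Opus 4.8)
The plan is to reduce the claim to the statement that the summand of equation~(\ref{alpha1}), regarded as a hypergeometric term in the summation index $v$ with $j,k,n$ treated as symbolic parameters, admits no hypergeometric antidifference; this is precisely the condition under which Gosper's Algorithm certifies that no closed form exists. (The upper limit $\min\{n-k,j\}$ plays no role here, since Gosper-summability is a property of the term ratio alone; a closed form for the definite sum would follow by telescoping an antidifference.) Writing the summand, up to the $v$-independent factor $(-1)^j$, as
\[
t_v = \frac{(-j)_v (1+j)_v (k-n)_v}{(1)_v (-n)_v\, v!},
\]
I would first record its term ratio
\[
\frac{t_{v+1}}{t_v} = \frac{(v-j)(v+j+1)(v+k-n)}{(v+1)^2(v-n)} =: \frac{p(v)}{q(v)}.
\]

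The second step is the Gosper--Petkov\v{s}ek decomposition $p(v)/q(v) = \bigl(a(v)/b(v)\bigr)\,\bigl(c(v+1)/c(v)\bigr)$ carried out over the rational function field $\mathbb{Q}(j,k,n)$. The key observation is that, although $\operatorname{Res}_v\bigl(p(v),q(v+h)\bigr)$ vanishes for the shifts $h \in \{\,j,\;k,\;n-j,\;n+j+1,\;-(j+1),\;k-n-1\,\}$, none of these is a non-negative integer once $j,k,n$ are treated as algebraically independent indeterminates. Hence there is no admissible integer shift, $p/q$ is already in Gosper--Petkov\v{s}ek normal form with $c(v)=1$, $a(v)=p(v)$, $b(v)=q(v)$, and the Gosper equation reduces to
\[
p(v)\,x(v+1) - q(v-1)\,x(v) = 1, \qquad q(v-1)=v^2(v-n-1).
\]

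The final step is to show this equation has no polynomial solution $x(v)$. Since $p(v)$ and $q(v-1)$ are both monic cubics, the terms of degree $3+\deg x$ cancel, and a short coefficient comparison shows that the coefficient of $v^{\,2+\deg x}$ on the left-hand side equals $(\deg x + k + 2)\,x_{\deg x}$, where $x_{\deg x}$ is the leading coefficient. For any candidate $x\neq 0$ of degree $d\ge 0$ this coefficient is nonzero over $\mathbb{Q}(j,k,n)$, so the left-hand side has degree exactly $2+d\ge 2$ and can never equal the constant $1$; and $x\equiv 0$ fails trivially. Equivalently, Gosper's degree bound forces the only possible degree to be $d=-(k+2)$, which is not a non-negative integer. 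No polynomial solution exists, so $t_v$ is not Gosper-summable and the lemma follows.

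I expect the main obstacle to be the genericity argument in the second step. One must be careful to treat $j,k,n$ as algebraically independent indeterminates, so that the ``shift roots'' $j,\,k,\,n-j,\dots$ are genuinely excluded from $\mathbb{N}_0$ and the normal form collapses to $c(v)=1$; this is exactly the regime in which a single \emph{parameter-free} closed form is sought. For special integer specializations of $j,k,n$ satisfying one of the shift relations (for instance $h=k$, which produces a common factor between $v+k-n$ and the shift of $v-n$), the decomposition is no longer trivial and a closed form could in principle appear. The lemma asserts only the absence of a uniform closed form, which the symbolic computation over $\mathbb{Q}(j,k,n)$ establishes.
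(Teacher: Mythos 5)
Your proposal is correct, and it reaches the paper's conclusion by a genuinely more self-contained route. The paper's own proof is a one-line computational certificate: it applies the fastZeil implementation of Gosper's algorithm to the right-hand side of equation~(\ref{alpha1}) (with upper bound $n-k$ or $j$) and simply reports that the call fails. You instead execute Gosper's algorithm by hand over $\mathbb{Q}(j,k,n)$: your term ratio $(v-j)(v+j+1)(v+k-n)/\bigl((v+1)^2(v-n)\bigr)$ is right; your list of resonant shifts $\{j,\,k,\,n-j,\,n+j+1,\,-(j+1),\,k-n-1\}$ is exactly the solution set of $\operatorname{Res}_v\bigl(p(v),q(v+h)\bigr)=0$, and since none of these is a non-negative integer when the parameters are indeterminates, the Gosper--Petkov\v{s}ek normal form is trivial ($c\equiv 1$); and the coefficient of $v^{d+2}$ in $p(v)x(v+1)-q(v-1)x(v)$ is indeed $(d+k+2)x_d$, so the candidate degrees $-2$ and $-(k+2)$ are both inadmissible and no polynomial solution exists. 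What your version buys is a proof that is independent of any particular software run, together with the precise structural reason for the failure; it also surfaces a point the paper leaves implicit, namely that for the actual integer values $0\le j,k\le n$ the shifts $h=j$, $h=k$, $h=n-j$ \emph{are} non-negative integers, so the non-summability conclusion lives in the generic symbolic-parameter regime --- which is exactly consistent with the paper's subsequent use of Zeilberger's algorithm to extract closed forms for individual rows and columns. What the paper's version buys is brevity and literal fidelity to the lemma's wording (the named algorithm, as implemented, does not find a closed form), at the cost of being unverifiable without the package.
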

\begin{proof}
Applying Gosper's algorithm from the `fastZeil' package version 3.61~\cite{paulezbf} to the right-hand side of equation~(\ref{alpha1}) with an upper bound of $n-k$ or $j$ fails.
\end{proof}

Farouki~\cite{farouki2000} derives the following expression for the elements of the change of basis matrix:
\begin{equation}
M_{jk} = \frac{1}{{n \choose k}} \sum_{ i = \max \{0, j+k -n\}}^{\min\{j,k\}} (-1)^{k+i} {j \choose i} {k \choose i} {{n-j} \choose {k-i}}
\end{equation} where $ 0 \leq j,k \leq n$. In our notation, the variable $j$ is $n-k$, i.e., the row index. The variable $k$ is $j$ which is the column index, and the summation variable $i$ is $v$.

This expression is also written using a hypergeometric function~\cite{farouki2000} as
\begin{equation}
M_{jk} = (-1)^k \frac{{{n-k} \choose j}}{{n \choose j}} \pFq{3}{2}{-k, -k, -j}{1, n-k-j+1}{1} \label{pfq1}
\end{equation} however this holds only for the upper skew triangle of $M_{jk}$ where $n \geq j + k$ and is indeterminate for the other elements.

Equation~(\ref{pfq0}) is simpler and more generally defined than equation~(\ref{pfq1}).

\subsubsection*{Closed Forms by Row and Column}

Equation~(\ref{reccol}) is a recurrence that can be used to find a hypergeometric closed form for the coefficient functions for a particular column of the change of basis matrix. Given any three adjacent columns, two of which have known coefficient functions, the one for the unknown column can be found. The first column has the coefficient function, $\alpha(n, 0, n-i) = 1$ where $0 \leq i \leq n$, and the second column has the coefficient function $\alpha(n, 1, n-i) = \frac{2i}n -1$. Applications of equation~(\ref{reccol}) provide $\alpha(n, 2, n-i)$, $\alpha(n, 3, n-i)$ and so on.

Similarly to the proof of Lemma~\ref{alphaM}, we can find the following recurrence on rows of the change of basis matrix.
\begin{align} \label{recrow}
&-(1+i)(i-n)\mbox{SUM}[i] + (1 + 4i + 2i^2 +j +j^2 - 3n - 2i n) \mbox{SUM}[1+i] \nonumber\\
&- (2+i) (1+i -n)\mbox{SUM}[2+i] = 0.
\end{align} provided that $0 \leq i \leq n-2$ in the recurrence.

It is not difficult to show that for the first row, $\alpha(n, j, n) = (-1)^j$ and for the second, $\alpha(n, j, n-1) = (-1)^j (1 - \frac{j(j+i)}n)$. The recurrence of equation~(\ref{recrow}) can be used to find the other $n-1$ formulas.

In both cases, the same formula can be found by applying equation~(\ref{pfq0}).

It is known that for the last column $\alpha(n, n, n-i) = (-1)^{n+i}{n \choose i}$, e.g., Farouki~\cite[equation (23)]{farouki2000}. Equation~(\ref{pfq0}) gives for this column

\begin{equation}
\alpha(n, n, n-i) = (-1)^{n} \sum_{v=0}^i \frac{ (1+n)_v (-i)_v}{(v!)^2} 
\end{equation}

This is the interpolation polynomial in the Lagrange form on the domain $[0, n]$. 
For example, with $n=4$ we have
\begin{align*}
\alpha(4, 4, 4-i) =& 1 - 5i - \frac{15}2(1-i)i - \frac{35}6(1-i)(2-i)i - \frac{35}{12}(1-i)(2-i)(3-i)i\\
=& \sum_{l=0}^4 (-1)^{4+l}{n \choose l} \prod_{\mathclap{\substack{v = 0 \\ v \not= l}}}^n \frac{(i -v)}{(l-v)}.
\end{align*}

This applies to all columns of the change of basis matrix.
\begin{lemma} \label{lagrangecol}
Given the elements $m_{i,j} = (-1)^j \pFq{3}{2}{-j, 1+j, -i}{1, -n}{1}$ of the change of basis matrix from shifted Legendre polynomials to Bernstein polynomials where $0 \leq i,j \leq n$ and the Lagrange interpolation polynomials
\[
l_k(i) = \prod_{\mathclap{\substack{v = 0 \\ v \not= k}}}^n \frac{(i -v)}{(k-v)}
\] where $0 \leq k \leq n$, then for each $j$,
\begin{equation} \label{colfun}
(-1)^j \pFq{3}{2}{-j, 1+j, -i}{1, -n}{1} = \sum_{v=0}^n m_{i,j} l_v(i).
\end{equation}
\end{lemma}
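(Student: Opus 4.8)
The plan is to recognize the right-hand side of equation~(\ref{colfun}) as the Lagrange interpolation polynomial, on the nodes $i = 0, 1, \ldots, n$, of the function sending the row index $i$ to the column entry $m_{i,j}$, and then to invoke the classical fact that a polynomial of degree at most $n$ is reproduced exactly by interpolation at $n+1$ distinct nodes. Concretely, for any function $f$ defined on $\{0, 1, \ldots, n\}$ the polynomial $\sum_{v=0}^n f(v)\, l_v(i)$ is the unique polynomial of degree at most $n$ agreeing with $f$ at these nodes; if $f$ is itself a polynomial in $i$ of degree at most $n$, then the interpolant coincides with $f$ identically, since two such polynomials that agree at $n+1$ distinct points are equal. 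Thus the entire lemma reduces to a single claim: the coefficient function $m_{i,j} = (-1)^j \pFq{3}{2}{-j, 1+j, -i}{1, -n}{1}$, regarded as a function of the row index $i$ with $j$ and $n$ held fixed, is \emph{a polynomial in $i$ of degree at most $n$}.

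First I would verify this polynomiality. Expanding the hypergeometric function as the terminating series
\[
m_{i,j} = (-1)^j \sum_{v=0}^{j} \frac{(-j)_v\,(1+j)_v\,(-i)_v}{(1)_v\,(-n)_v\,v!},
\]
the sum terminates at $v = j$ because of the factor $(-j)_v$, and every denominator factor $(-n)_v$ is nonzero since $v \le j \le n$. The only dependence on $i$ occurs through $(-i)_v = (-i)(-i+1)\cdots(-i+v-1)$, which is a polynomial in $i$ of degree exactly $v$; all remaining factors are constants with respect to $i$. Hence $m_{i,j}$ is a polynomial in $i$ whose degree equals $j \le n$, as needed.

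With polynomiality established, the conclusion is immediate: by construction the interpolant $\sum_{v=0}^n m_{v,j}\, l_v(i)$ and the polynomial $m_{i,j}$ agree at each node $i = 0, 1, \ldots, n$, and both have degree at most $n$, so they coincide as polynomials in $i$, which yields equation~(\ref{colfun}). I expect no serious obstacle here; the only point requiring care is the bookkeeping that the bottom parameter $-n$ never forces a vanishing denominator in the terminating range $0 \le v \le j \le n$, which guarantees that the displayed expansion is genuinely polynomial rather than a ratio that might degenerate. The example with $n = 4$ preceding the lemma, where $\alpha(4,4,4-i)$ is written explicitly in Lagrange form with node values $(-1)^{4+l}\binom{4}{l}$, is exactly this argument specialised to the last column.
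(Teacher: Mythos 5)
Your proof is correct and is essentially the paper's own argument: both establish that each side of (\ref{colfun}) is a polynomial of degree at most $n$ in $i$ (you via the terminating ${}_3F_2$ expansion with the observation that $(-n)_v \neq 0$ for $v \leq j \leq n$, the paper via its auxiliary basis $I = \{1, -i, -i(-i+1), \ldots, (-i)_n\}$) and then conclude equality because the two sides agree at the $n+1$ nodes $i = 0, 1, \ldots, n$, where $l_v(i) = \delta_{vi}$, and polynomials of degree at most $n$ agreeing at $n+1$ points coincide. Note that, like the paper's proof, you have silently read the right-hand side of (\ref{colfun}) as $\sum_{v=0}^n m_{v,j}\, l_v(i)$; the summand $m_{i,j}$ as printed is a typo, confirmed by the paper's worked example with $n=5$, $j=3$.
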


\begin{proof}
In equation~(\ref{colfun}), $j$ is a column index, and the right-hand side of the equation is a polynomial that is a linear combination of terms in the set
\[ L= \{l_0(i), l_1(i), \ldots , l_n(i)\}
\] with coefficients that are elements of the column with index $j$. The set $L$ is a basis that spans the same vector space as $\{1, i, i^2, \ldots, i^n\}$. The elements are linearly independent. Otherwise, suppose that $l_h(i)$ where $0 \leq h \leq n$ is a linear combination of the other Lagrange polynomials in $L$. It follows that $l_h(h) = 1$ and all of the other polynomials in $L$ evaluated at $h$ are zero. This is a contradiction. Hence, the polynomials of $L$ are linearly independent. 

The terms in the Lagrange polynomials in $L$ can be written as polynomials with respect to $\{1, i, i^2, \ldots, i^n\}$. Each has terms of degree from $1$ to $n$ except for $l_0(i)$ that has a constant term. The columns of the change of basis matrix from the Lagrange polynomials to the monomials can be formed from the transposed coordinate vectors of these polynomials.

The left-hand side of the equation is a polynomial that is a linear combination of
\[
I = \{1, -i, -i(-i +1), -i(-i +1)(-i +2), \ldots, (-i)_n \}.
\]
A similar proof step shows that these polynomials form a basis that spans the same the vector space as one with the basis $\{1, i, i^2, \ldots, i^n\}$. The columns of the change of basis matrix from $I$ to the second basis can be similarly formed. The product of the inverse of this matrix and the previous matrix is the change of basis matrix from the Lagrange polynomials to $I$, see Wolfram~\cite[Theorem 1]{wolfram2107}.

Extensionally, the right-hand side of equation~(\ref{colfun}) equals the left-hand side because for each $i$, we have $l_v(i) = \delta_{vi}$, and $m_{i, j} \delta_{ii}$ is equal to the left-hand side from equation~(\ref{pfq0}). It follows that the polynomial representations of both sides of equation~(\ref{colfun}) in any basis that spans $\{1, i, i^2, \ldots, i^n\}$ are equal.
\end{proof}

As an example of the matrices described in the proof of Lemma~\ref{lagrangecol}, let $n=5$ and $j = 3$. We have
\begin{align*}
l_0(i) = & \frac1{120} (-i+1)(-i+2) (-i+3)(-i+4)(-i+5)\\
=& -\frac{i^5}{120} + \frac{i^4}8 - \frac{17 i^3}{24} + \frac{15 i^2}8 - \frac{137 i}{60} + 1\\
l_3(i) = & -\frac1{12} -i (-i+1)(-i+2)(-i+4)(-i+5)\\
=& \frac{i^5}{12} - i^4 + \frac{49i^3}{12} - \frac{13 i^2}2 + \frac{10 i}3.
\end{align*}

The change of basis matrix from the Lagrange interpolation polynomials to the monomials is

\begin{center}
\begin{doublespace}
\noindent\(\left[
\begin{array}{cccccc}
1 & 0 & 0 & 0 & 0 & 0 \\
-\frac{137}{60} & 5 & -5 & \frac{10}{3} & -\frac{5}{4} & \frac{1}{5} \\
\frac{15}{8} & -\frac{77}{12} & \frac{107}{12} & -\frac{13}{2} & \frac{61}{24} & -\frac{5}{12} \\
-\frac{17}{24} & \frac{71}{24} & -\frac{59}{12} & \frac{49}{12} & -\frac{41}{24} & \frac{7}{24} \\
\frac{1}{8} & -\frac{7}{12} & \frac{13}{12} & -1 & \frac{11}{24} & -\frac{1}{12} \\
-\frac{1}{120} & \frac{1}{24} & -\frac{1}{12} & \frac{1}{12} & -\frac{1}{24} & \frac{1}{120} \\
\end{array}
\right]\)
\end{doublespace}
\end{center}

An example of a polynomial in the basis $I$ is
\begin{align*}
-i(-i+1)(-i+2) = & -i^3 + 3i^2 - 2i.\\
\end{align*}

In general, $-i(-i+1) \cdots (-i+n-1)$ is $(-1)^n i^{\underline{n}}$ where $i^{\underline{n}}$ is the falling factorial.
The change of basis matrix from the monomials to $I$ is

\begin{center}
\begin{doublespace}
\noindent\({\left[
\begin{array}{cccccc}
1 & 0 & 0 & 0 & 0 & 0 \\
0 & -1 & -1 & -2 & -6 & -24 \\
0 & 0 & 1 & 3 & 11 & 50 \\
0 & 0 & 0 & -1 & -6 & -35 \\
0 & 0 & 0 & 0 & 1 & 10 \\
0 & 0 & 0 & 0 & 0 & -1 \\
\end{array}
\right]}^{-1} = 
\left[
\begin{array}{cccccc}
1 & 0 & 0 & 0 & 0 & 0 \\
0 & -1 & -1 & -1 & -1 & -1 \\
0 & 0 & 1 & 3 & 7 & 15 \\
0 & 0 & 0 & -1 & -6 & -25 \\
0 & 0 & 0 & 0 & 1 & 10 \\
0 & 0 & 0 & 0 & 0 & -1 \\
\end{array}
\right] \) 
\end{doublespace}
\end{center}

The required change of basis matrix is the product of the matrices.

\begin{center}
\begin{doublespace}
\(\left[
\begin{array}{cccccc}
1 & 0 & 0 & 0 & 0 & 0 \\
0 & -1 & -1 & -1 & -1 & -1 \\
0 & 0 & 1 & 3 & 7 & 15 \\
0 & 0 & 0 & -1 & -6 & -25 \\
0 & 0 & 0 & 0 & 1 & 10 \\
0 & 0 & 0 & 0 & 0 & -1 \\
\end{array}
\right].\left[
\begin{array}{cccccc}
1 & 0 & 0 & 0 & 0 & 0 \\
-\frac{137}{60} & 5 & -5 & \frac{10}{3} & -\frac{5}{4} & \frac{1}{5} \\
\frac{15}{8} & -\frac{77}{12} & \frac{107}{12} & -\frac{13}{2} & \frac{61}{24} & -\frac{5}{12} \\
-\frac{17}{24} & \frac{71}{24} & -\frac{59}{12} & \frac{49}{12} & -\frac{41}{24} & \frac{7}{24} \\
\frac{1}{8} & -\frac{7}{12} & \frac{13}{12} & -1 & \frac{11}{24} & -\frac{1}{12} \\
-\frac{1}{120} & \frac{1}{24} & -\frac{1}{12} & \frac{1}{12} & -\frac{1}{24} & \frac{1}{120} \\
\end{array}
\right]\)
\end{doublespace}
\end{center}

which is

\begin{center}
\begin{doublespace}
\noindent\(\left[
\begin{array}{cccccc}
1 & 0 & 0 & 0 & 0 & 0 \\
1 & -1 & 0 & 0 & 0 & 0 \\
\frac{1}{2} & -1 & \frac{1}{2} & 0 & 0 & 0 \\
\frac{1}{6} & -\frac{1}{2} & \frac{1}{2} & -\frac{1}{6} & 0 & 0 \\
\frac{1}{24} & -\frac{1}{6} & \frac{1}{4} & -\frac{1}{6} & \frac{1}{24} & 0 \\
\frac{1}{120} & -\frac{1}{24} & \frac{1}{12} & -\frac{1}{12} & \frac{1}{24} & -\frac{1}{120} \\
\end{array}
\right]\)
\end{doublespace}
\end{center}

The shifted Legendre polynomial to Bernstein polynomial change of basis matrix with $n=5$ is the matrix
\begin{center}
\begin{doublespace}
\(\left[
\begin{array}{cccccc}
1 & -1 & 1 & -1 & 1 & -1 \\
1 & -\frac{3}{5} & -\frac{1}{5} & \frac{7}{5} & -3 & 5 \\
1 & -\frac{1}{5} & -\frac{4}{5} & \frac{4}{5} & 2 & -10 \\
1 & \frac{1}{5} & -\frac{4}{5} & -\frac{4}{5} & 2 & 10 \\
1 & \frac{3}{5} & -\frac{1}{5} & -\frac{7}{5} & -3 & -5 \\
1 & 1 & 1 & 1 & 1 & 1 \\
\end{array}
\right]\)
\end{doublespace}
\end{center}

The product of the previous change of basis matrix with the column of this matrix with index $j = 3$ is
\[
[ -1, -\frac{12}5, -\frac32, -\frac13, 0, 0 ]^{T}
\]

It follows that
\begin{align*}
\alpha(5, i, 3) =& -1 +\frac{12}5 i + \frac32 i (-i + 1) + \frac13 i (-i+1)(-i+2)\\
=& \frac{i^3}3 - \frac{5 i^2}2 + \frac{137 i}{30} - 1\\
=& \sum_{v=0}^5 m_{v, 3} l_v(i).
\end{align*}

Another interesting property is the coefficient function of the penultimate column.
\begin{lemma} \label{lemm-pen}
The coefficient function of the column with index $n-1$ of the change of basis matrix is
\begin{equation} \label{alpha-pen}
\alpha(n, n-1, n-i) = (-1)^{n+i} \frac{2i-n}n {n \choose i} \: \mbox{where $n > 0$.}
\end{equation} 
\end{lemma}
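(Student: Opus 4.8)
The plan is to evaluate the penultimate column directly from the hypergeometric form of the coefficient function in equation~(\ref{pfq0}), rather than from the column recurrence~(\ref{reccol}): the Zeilberger validity condition for that recurrence requires $j \le n-2$, so it cannot legitimately be invoked at the column index $j = n-1$. First I would specialise $\alpha(n,j,k) = (-1)^j \pFq{3}{2}{-j, 1+j, k-n}{1, -n}{1}$ to $j = n-1$ and $k = n-i$, which gives
\[
\alpha(n, n-1, n-i) = (-1)^{n-1}\,\pFq{3}{2}{1-n, n, -i}{1, -n}{1}.
\]

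The key observation is that, with numerator parameter $1-n$ against denominator parameter $-n$, the Pochhammer ratio collapses: $\frac{(1-n)_v}{(-n)_v} = \frac{n-v}{n}$ for every $v$. This turns the (generally non-summable) ${}_3F_2$ into $\frac1n \sum_v (n-v)\frac{(n)_v (-i)_v}{(v!)^2}$, which I would split as $\frac1n\big(n\,S_0 - S_1\big)$ with $S_0 = \sum_v \frac{(n)_v(-i)_v}{(v!)^2}$ and $S_1 = \sum_v v\,\frac{(n)_v(-i)_v}{(v!)^2}$. Both are Gauss series at argument $1$: $S_0 = \pFq{2}{1}{n, -i}{1}{1}$ directly, and $S_1$ after the shift $v\mapsto v+1$ becomes $-ni\,\pFq{2}{1}{n+1, 1-i}{2}{1}$. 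Chu--Vandermonde summation then yields $S_0 = \frac{(1-n)_i}{i!}$ and $S_1 = -ni\,\frac{(1-n)_{i-1}}{i!}$.

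The final step is purely algebraic: using $(1-n)_i = (i-n)(1-n)_{i-1}$ I would factor $(1-n)_{i-1}$ out of $\frac1n(nS_0 - S_1)$ and collect the bracket $(i-n)+i = 2i-n$, then rewrite $(1-n)_{i-1} = (-1)^{i-1}\frac{(n-1)!}{(n-i)!}$ and recognise $\frac{(n-1)!}{i!(n-i)!} = \frac1n\binom{n}{i}$. Reinstating the prefactor $(-1)^{n-1}$ and combining signs produces $(-1)^{n+i}\frac{2i-n}{n}\binom{n}{i}$, as claimed.

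The main obstacle is spotting the ratio simplification $\frac{(1-n)_v}{(-n)_v} = \frac{n-v}{n}$; this is precisely the special structure of the penultimate column that makes it summable in closed form even though the generic column is not (Lemma~\ref{closed-alpha}). A secondary point requiring care is the boundary value $i = n$, where $(1-n)_i = 0$ and the series terminates early at $v = n-1$; I would check that the two Chu--Vandermonde evaluations remain valid there and reproduce the expected value $\alpha(n,n-1,0) = 1$.
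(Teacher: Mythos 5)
Your proof is correct, but it takes a genuinely different route from the paper's. The paper never touches the hypergeometric representation: its proof sets out to verify $P^{\ast}_{n-1}(x) = \sum_{v=0}^{n} \alpha(n,n-1,v)\, b^{n}_{n-v}(x)$ directly, rewrites the right-hand side under the substitution $y = 2x-1$ as $2^{-n}\sum_{v=0}^{n} \frac{n-2v}{n}\binom{n}{v}^{2}(y+1)^{n-v}(y-1)^{v}$, and then applies Zeilberger's algorithm to show this sum satisfies the Legendre three-term recurrence, with the cases $n=1,2$ as initial conditions — an induction on $n$ anchored in the defining recurrence of the Legendre polynomials and certified by computer algebra. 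You instead evaluate the paper's own equation~(\ref{pfq0}) at $j=n-1$ in closed form: the contiguous parameter pair ($1-n$ above, $-n$ below) collapses $(1-n)_v/(-n)_v$ to $(n-v)/n$, splitting the ${}_{3}F_{2}$ into two terminating Gauss series summed by Chu--Vandermonde; your intermediate identities $(1-n)_i=(i-n)(1-n)_{i-1}$, $(2)_{i-1}=i!$, and $(1-n)_{i-1}=(-1)^{i-1}(n-1)!/(n-i)!$ are all sound, and the boundary checks at $i=0$ and $i=n$ (where $nS_0$ vanishes and $S_1$ carries the whole value) close the argument. What each approach buys: the paper's proof is independent of equation~(\ref{pfq0}) — which was itself obtained by Mathematica simplification — so it doubles as an external consistency check of the whole composition machinery against the definition of the shifted Legendre polynomials; your proof needs no algorithmic certificate, uses only classical identities, and, more informatively, isolates \emph{why} the penultimate column is summable in closed form while the generic column is not (Lemma~\ref{closed-alpha}): the parameter collapse is special to $j=n-1$. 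One small caveat: your opening claim that the recurrence~(\ref{reccol}) cannot legitimately reach this column is only half right — it cannot be run with recurrence index $j=n-1$, but column $n-1$ is still obtainable as the third column of the instance $j=n-3$; this does not affect your proof, which never uses the recurrence.
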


\begin{proof}
The proof shows that 
\begin{equation} \label{rtp-pen}
P^{\ast}_{n-1}(x) = \sum_{v=0}^n \alpha(n, n-1, v) b_{n-v}^n(x)
\end{equation}

From equation~(\ref{alpha-pen}), the right-hand side of equation~(\ref{rtp-pen})
is
\[
\sum_{v=0}^n (-1)^{2n -v} \frac{n-2v}n {n \choose v} b_{n-v}^n(x).
\]

This equals
\begin{align*}
& \sum_{v=0}^n (-1)^{2n -v} \frac{n-2v}n {n \choose v} { n\choose v} x^{n-v} (1-x)^{v}\\
= 2^{-n} & \sum_{v=0}^n \frac{n-2v}n {n \choose v}^2 (y+1)^{n-v} (y-1)^{v} \: \mbox{where $y = 2x -1$.}\\
\end{align*}

When $n=1$, this expression evaluates to $1 = P_0(y)$ and when $n=2$ it evaluates to $y = P_1(y)$, as required.

We apply Zeilberger's algorithm to this expression by using the Mathematica package `fastZeil' version 3.61~\cite{paulezbf}. This gives the recurrence
\[
(2n +1) y \mbox{ SUM}[1+ n] -n \mbox{ SUM}[n] + (-n-1)\mbox{ SUM}[2+n] = 0
\] provided that $n$ is a natural number. This is a recurrence relation that can be used to define the Legendre polynomials~\cite[Table 18.19.1]{dlmf} where $\mbox{SUM}[n] = P_{n-1}(y)$. Hence, equations~(\ref{rtp-pen}) and (\ref{alpha-pen}) hold.
\end{proof}

\begin{cor}
\begin{equation}
P_n(x) - P_{n-1}(x) = \frac{2^{-n+1}}n \sum_{v=0}^n v {n \choose v}^2 (x+1)^{n-v} (x-1)^{v}
\end{equation} where $n \geq 1$.
\end{cor}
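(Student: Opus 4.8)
The plan is to obtain the identity by subtracting two closed sums of the same shape, namely $2^{-n}\sum_{v} c_v \binom{n}{v}^2 (x+1)^{n-v}(x-1)^v$, one representing $P_{n-1}(x)$ and the other $P_n(x)$. The representation of $P_{n-1}$ is already available inside the proof of Lemma~\ref{lemm-pen}: writing the dummy variable there as $x$ in place of $y$, that proof establishes
\[
P_{n-1}(x) = 2^{-n}\sum_{v=0}^n \frac{n-2v}{n}\binom{n}{v}^2 (x+1)^{n-v}(x-1)^v .
\]
Thus the only new ingredient I need is the companion representation of $P_n$ itself in exactly the same basis of products $(x+1)^{n-v}(x-1)^v$.

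For $P_n$ I would reuse the Bernstein-to-$(y\pm 1)$ device already exercised in Lemma~\ref{lemm-pen}. The last column of the change of basis matrix is known, $\alpha(n,n,n-i) = (-1)^{n+i}\binom{n}{i}$ (Farouki~\cite[equation (23)]{farouki2000}), equivalently $\alpha(n,n,v) = (-1)^v\binom{n}{v}$, so that $P^{\ast}_n(x) = \sum_{v=0}^n (-1)^v \binom{n}{v}\, b_{n-v}^n(x)$. Substituting $b_{n-v}^n(x) = \binom{n}{v} x^{n-v}(1-x)^v$ together with $y = 2x-1$, so that $x = (y+1)/2$ and $1-x = (1-y)/2$, each Bernstein term becomes $2^{-n}\binom{n}{v}(y+1)^{n-v}(-1)^v (y-1)^v$. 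The two factors of $(-1)^v$ cancel, and I obtain
\[
P_n(x) = 2^{-n}\sum_{v=0}^n \binom{n}{v}^2 (x+1)^{n-v}(x-1)^v ,
\]
which is the standard hypergeometric/Rodrigues form of the Legendre polynomials (see~\cite{dlmf}) and may alternatively simply be quoted from a reference rather than re-derived.

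With both representations in hand the conclusion is immediate term by term, since $1 - \frac{n-2v}{n} = \frac{2v}{n}$:
\[
P_n(x) - P_{n-1}(x) = 2^{-n}\sum_{v=0}^n \Bigl(1 - \tfrac{n-2v}{n}\Bigr)\binom{n}{v}^2 (x+1)^{n-v}(x-1)^v = \frac{2^{-n+1}}{n}\sum_{v=0}^n v \binom{n}{v}^2 (x+1)^{n-v}(x-1)^v ,
\]
as claimed; note the $v=0$ term drops out on both sides, consistent with the factor $v$ on the right.

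There is essentially no deep obstacle here: once the two sums are exhibited with identical monomial factors $\binom{n}{v}^2 (x+1)^{n-v}(x-1)^v$, the result is a one-line subtraction. The only place demanding care is the sign and substitution bookkeeping in passing $P_n$ from Bernstein form into $(y\pm 1)$ form, specifically tracking $(1-x)^v = (-1)^v\bigl((y-1)/2\bigr)^v$ and verifying that the two $(-1)^v$ factors cancel, and ensuring that the $P_{n-1}$ formula borrowed from Lemma~\ref{lemm-pen} is aligned to the same variable $x$. If one prefers to avoid invoking the last-column coefficient function, the representation of $P_n$ can be cited directly, after which the remaining argument is unchanged.
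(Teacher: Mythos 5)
Your proof is correct and follows the route the paper intends: the corollary is stated without its own proof precisely because it is the term-by-term subtraction of the representation $P_{n-1}(x)=2^{-n}\sum_{v}\frac{n-2v}{n}\binom{n}{v}^2(x+1)^{n-v}(x-1)^v$ established inside the proof of Lemma~\ref{lemm-pen} from the classical expansion $P_n(x)=2^{-n}\sum_{v}\binom{n}{v}^2(x+1)^{n-v}(x-1)^v$, exactly as you do. Your sign bookkeeping (the cancellation of the two $(-1)^v$ factors) and the final simplification $1-\frac{n-2v}{n}=\frac{2v}{n}$ are both right, so nothing is missing.
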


\begin{cor}
\begin{equation}
\alpha(n,1,n-i) = \frac{\alpha(n, n-1, n-i)}{\alpha(n,n,n-i)}
\end{equation} i.e., in the same row, the element in the second column is the ratio of the elements of the penultimate and last columns.
\end{cor}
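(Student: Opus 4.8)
The plan is simply to substitute the three explicit column formulas already established in this section and cancel the common factor. First I would collect the three ingredients. The second-column coefficient function was computed earlier in the discussion of initial conditions as $\alpha(n,1,n-i) = \frac{2i}{n}-1$, which I rewrite as $\frac{2i-n}{n}$. The penultimate column is supplied by Lemma~\ref{lemm-pen}, namely $\alpha(n,n-1,n-i) = (-1)^{n+i}\frac{2i-n}{n}\binom{n}{i}$. The last column is the known value $\alpha(n,n,n-i) = (-1)^{n+i}\binom{n}{i}$ recorded just before equation~(\ref{pfq0}).

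Next I would form the quotient of the penultimate-column value over the last-column value and observe that both carry the common factor $(-1)^{n+i}\binom{n}{i}$:
\[
\frac{\alpha(n,n-1,n-i)}{\alpha(n,n,n-i)} = \frac{(-1)^{n+i}\,\frac{2i-n}{n}\,\binom{n}{i}}{(-1)^{n+i}\binom{n}{i}} = \frac{2i-n}{n}.
\]
The right-hand side is exactly the expression for $\alpha(n,1,n-i)$ identified above, which establishes the identity.

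The one point requiring a word of justification is that the cancellation is legitimate, i.e., that the denominator $\alpha(n,n,n-i)$ is nonzero over the relevant range. Since $\binom{n}{i}\neq 0$ for all $0\le i\le n$ and the hypothesis $n>0$ rules out the degenerate case, the denominator never vanishes and the ratio is well-defined for every admissible $i$.

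I expect no genuine obstacle here: the statement is a transparent algebraic consequence of three formulas already in hand, and the proof is a two-line substitution. The only conceptual content worth flagging is structural rather than computational, namely that the penultimate and last columns share the identical sign-and-binomial prefactor $(-1)^{n+i}\binom{n}{i}$, so that their quotient strips away everything except the linear factor $\frac{2i-n}{n}$ that defines the second column. This common-prefactor structure is precisely what makes the three columns commensurable and the corollary exact.
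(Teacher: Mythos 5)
Your proposal is correct and follows essentially the same route as the paper: the paper's (very terse) proof simply cites the second-column formula $\alpha(n,1,n-i)=\frac{2i}{n}-1$ from equation~(\ref{pfq0}) and leaves implicit the cancellation against the penultimate-column formula of Lemma~\ref{lemm-pen} and the known last-column value $(-1)^{n+i}\binom{n}{i}$, which is exactly the computation you carry out explicitly. Your added remark that $\binom{n}{i}\neq 0$ justifies the division is a harmless and correct piece of extra care.
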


\begin{proof}
The second column has the coefficient function
\[
\alpha(n, 1, n-i) = \frac{2i}n -1
\] from equation~(\ref{pfq0}).
\end{proof}

A summary of coefficient formulas for this change of basis matrix is given in Appendix~\ref{app1}.

\subsection{Shifted Chebyshev  to Bernstein Polynomials}

AlQudah~\cite{alqudah} considers mappings between shifted Chebyshev polynomials of the second kind, i.e., $U_n(2x -1)$ and Bernstein polynomials, and other mappings involving a generalization of these Chebyshev polynomials.

Equation (6) in~\cite{alqudah} is for Chebyshev polynomials of the second kind.
\[
U_n(x) = \frac{(n+1)(2n)!!}{(2n+1)!!} \sum_{k=0}^n {{n + \frac12} \choose {n-k}} {{n + \frac12} \choose {k}} {\left(\frac{x+1}2\right)}^{n-k} {\left(\frac{x-1}2\right)}^{k}.
\]

This enables the shifted form to be expressed in terms of Bernstein polynomials:
\begin{align}
U_n(2x-1) =& \frac{(n+1)(2n)!!}{(2n+1)!!} \sum_{k=0}^n {{n + \frac12} \choose {n-k}} {{n + \frac12} \choose {k}} {x}^{n-k} (x-1)^{k} \nonumber\\
=& \frac{(n+1)(2n)!!}{(2n+1)!!} \sum_{k=0}^n (-1)^{k}{{n + \frac12} \choose {n-k}} {{n + \frac12} \choose {k}} {x}^{n-k} (1-x)^{k}\nonumber\\
=& \frac{(n+1)(2n)!!}{(2n+1)!!} \sum_{l=0}^n (-1)^{n-l} \frac{{{n + \frac12} \choose {l}} {{n + \frac12} \choose {n-l}}}{{n \choose l}} b^n_l(x). \label{mapBUst}
\end{align}

This gives a coefficient function
\begin{equation*} 
\alpha(n, k) = \frac{(n+1)(2n)!!}{(2n+1)!!} (-1)^{k} \frac{{{n + \frac12} \choose {n-k}} {{n + \frac12} \choose {k}}}{{n \choose k}}.
\end{equation*}

A small correction needs to be made to equation (7) in~\cite{alqudah}. The exponent of the factor $-1$ should be $n-k$ instead of $n+1$. We can show that 
\begin{equation}
{{n+ \frac12} \choose n} (2n)!! = (2n+1)!!
\end{equation} from the definitions, so that the coefficient function becomes
\begin{equation} \label{coeff1}
\alpha(n, k) = (-1)^{k} (n+1) \frac{{{n + \frac12} \choose {n-k}} {{n + \frac12} \choose {k}}}{{n \choose k}{{n + \frac12} \choose n} }.
\end{equation}

This example is unusual because  there is no single change of basis matrix from the domain $\{U_0^{\ast}(x), \ldots, U_n^{\ast}(x)\}$  to the range basis $\{b^n_0(x), \ldots, b^n_n(x)\}$. The range basis depends on the degree $n$ of $U^{\ast}_n(x)$.

Instead, the coefficient function can be found from the last column of the products of two change of basis matrices to form the mapping
\[
\{U_0^{\ast}(x), \ldots, U_n^{\ast}(x)\} \:\: \mbox{to} \:\: \{b^n_0(x), \ldots, b^n_n(x)\}.
\] More specifically, we shall apply equation~(\ref{compDA}).
The coefficient function $\alpha_2$ for the mapping from the descending basis of shifted Chebyshev polynomials of the second kind to monomials is given in equation (33) of Wolfram~\cite{wolfram2107}:
\[
\alpha_2(n, k) = 
\sum_{v=0}^{\lfloor \frac{k}2 \rfloor} {{n-2v} \choose {k-2v}} {{n-v}\choose v} (-1)^{k-v} 2^{2(n-v)-k}. 
\]

The coefficient function $\alpha_1$ for the mapping from monomials to the ascending basis of Bernstein polynomials is equation~(\ref{invB1}):
\begin{equation*} 
\alpha_1(n,m,k) = \frac{{{n-m} \choose k}}{{n \choose k}}
\end{equation*} where $0 \leq m \leq n$.

Applying equation~(\ref{compDA}) gives
\begin{equation} 
\alpha_3(n,j,k) =\sum\limits_{v=0}^{\min\{n-k, j\}} \frac{{{n-v} \choose k}}{{n \choose k}} \sum_{l=0}^{\lfloor \frac{j-v}2 \rfloor} {{j-2l} \choose {j-v-2l}} {{j-l}\choose l} (-1)^{j-v-l} 2^{j -2l +v}
\end{equation} where $0 \leq j, k,m \leq n$. When $j=n$, we obtain

\begin{equation} \label{a3}
\alpha_3(n,n,k) =(-2)^{n}\sum\limits_{v=0}^{n-k} (-2)^{v}\frac{{{n-v} \choose k}}{{n \choose k}} \sum_{l=0}^{\lfloor \frac{n-v}2 \rfloor} (-4)^{-l} {{n-2l} \choose {v}} {{n-l}\choose l} 
\end{equation}

This is equal to the coefficient function of equation~(\ref{coeff1}) as the following lemma verifies.

\begin{lemma}
\begin{equation}
\alpha_3(n,n,k)= \alpha(n, k) \:\: \mbox{where $0 \leq k \leq n$.}
\end{equation} 
\end{lemma}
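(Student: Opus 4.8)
The cleanest route is structural rather than computational. By construction, $\alpha_3(n,n,k)$ is obtained by applying the composition rule~(\ref{compDA}) to $\alpha_2$, the shifted-Chebyshev-to-monomial coefficient function, and $\alpha_1$, the monomial-to-Bernstein coefficient function of~(\ref{invB1a}); its last column ($j=n$) is therefore precisely the coordinate vector of $U_n(2x-1)=U_n^{\ast}(x)$ with respect to the Bernstein basis $\{b_0^n(x),\ldots,b_n^n(x)\}$. But equation~(\ref{mapBUst}) already exhibits $U_n(2x-1)$ expanded in that same basis, with the coefficient of each Bernstein polynomial given by~(\ref{coeff1}). Since the Bernstein polynomials of degree $n$ are linearly independent, the two coordinate vectors must agree term by term, which is exactly the claim $\alpha_3(n,n,k)=\alpha(n,k)$.

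To make this explicit, I would write $U_n^{\ast}(x)=\sum_v \alpha_2(n,0,n-v)\,x^v$, substitute the monomial expansion $x^v=\sum_i \frac{\binom{n-v}{i}}{\binom{n}{i}} b_{n-i}^n(x)$ from~(\ref{invB1}), and collect the coefficient of a fixed Bernstein polynomial; the resulting inner-then-outer sum is exactly the double sum defining $\alpha_3(n,n,k)$ in~(\ref{a3}). The one place that needs care is reconciling the index direction and the sign conventions: the raw expansion~(\ref{mapBUst}) carries a factor $(-1)^{n-l}$, whereas~(\ref{coeff1}) is written with $(-1)^{k}$, so one must track the reindexing between the summation variable of~(\ref{mapBUst}) and the argument $k$ of $\alpha_3$, using that $\binom{n+\frac12}{k}\binom{n+\frac12}{n-k}/\binom{n}{k}$ is invariant under $k\mapsto n-k$. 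Once the bookkeeping is fixed, linear independence closes the argument with no further computation.

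If a self-contained verification is preferred, I would instead follow the paper's established toolkit. The first step is to collapse the inner sum $\sum_{l=0}^{\lfloor (n-v)/2\rfloor}(-4)^{-l}\binom{n-2l}{v}\binom{n-l}{l}$, a Chebyshev-type convolution, into closed form using Zeilberger's algorithm, reducing $\alpha_3(n,n,k)$ to a single hypergeometric sum. I would then show that this single sum and the closed form~(\ref{coeff1}) satisfy a common recurrence in $k$ (or in $n$) produced by Zeilberger's algorithm, and check one base case, exactly as in the proofs of Theorems~\ref{invdesa}, \ref{RMD} and~\ref{beta-RM}. The main obstacle in either approach is this inner $l$-sum: the structural proof hides all the work inside the convention-matching step, while the direct proof requires evaluating the convolution before the outer comparison becomes a routine recurrence check, so I expect the evaluation of the inner sum to be the crux.
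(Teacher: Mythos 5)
Your primary, structural argument is correct, and it takes a genuinely different route from the paper. The paper's proof is computational: it applies Zeilberger's algorithm to the double sum in equation~(\ref{a3}) to obtain the first-order recurrence~(\ref{reck}), verifies with Mathematica that the closed form~(\ref{coeff1}) satisfies the same recurrence, and checks the initial values $\mbox{SUM}[0]=n+1$ and $\mbox{SUM}[1]=-\frac13(n+1)(2n+1)$ (it also exhibits a Gamma-function closed form of~(\ref{a3}) satisfying the recurrence). You instead observe that both sides are, by construction, coordinates of the same polynomial $U_n(2x-1)$ with respect to the same basis $\{b_0^n(x),\ldots,b_n^n(x)\}$: the left side because~(\ref{compDA}) applied to $\alpha_1$ of~(\ref{invB1a}) and $\alpha_2$ implements exactly the substitution of~(\ref{invB1}) into the monomial expansion of $U_n^{\ast}(x)$, the right side by~(\ref{mapBUst}); uniqueness of coordinates in a basis then forces agreement, with only the reindexing $l\mapsto n-k$ (and the symmetry of $\binom{n+\frac12}{k}\binom{n+\frac12}{n-k}/\binom{n}{k}$ under $k\mapsto n-k$, which you correctly identify) left to check. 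What each approach buys: yours is shorter, requires no computer algebra, and explains \emph{why} the identity must hold; but its validity rests entirely on AlQudah's equation~(6)~\cite{alqudah} and on the algebra leading from it to~(\ref{mapBUst}) and~(\ref{coeff1}). The paper's recurrence-based verification is independent of that derivation, so it doubles as a genuine cross-check on the sign correction the paper makes to AlQudah's equation~(7) --- arguably the main point of stating the lemma --- whereas your structural proof would silently inherit any error in that source. Your fallback verification (collapse the inner convolution with Zeilberger, then match a common recurrence and initial values) is essentially the paper's proof, differing only in that the paper runs Zeilberger on the full double sum directly rather than evaluating the inner $l$-sum first.
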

\begin{proof}
We apply Zeilberger's algorithm from the Mathematica package `fastZeil' version 3.61~\cite{paulezbf} to the right-hand side of equation~(\ref{a3}). This gives
\begin{equation} \label{reck}
(-1+2k-2n)\mbox{ SUM}[k] + (-3-2k) \mbox{ SUM}[1+k] = 0
\end{equation} provided that $-1-k+n$ is a natural number and none of $\{k, n\}$ is a negative integer. These conditions are satisfied because $0 \leq k \leq n$, and $n > k$ in the recurrence.

With Mathematica, we can show that the right-hand side of equation~(\ref{coeff1}) satisfies equation~(\ref{reck}) where $\mbox{ SUM}[k] = \alpha(n,k)$. The initial conditions are $\mbox{ SUM}[0] = n +1$ and $\mbox{ SUM}[1] = -\frac13 (n+1)(2n+1)$.

The right-hand side of equation~(\ref{a3}) simplifies\footnote{Mathematica 13.0 gives a result that is half of this expression. I have reported this as a bug.} to 
\[
\alpha(n, n, k) = (-1)^n (n+1)\frac{ \sqrt{\pi} \; \Gamma(-\frac12 - k)}{2 \Gamma(-\frac12 -n) \; \Gamma(\frac32 -k + n) }.
\] This satisfies equation~(\ref{reck}). 
We can show that $\mbox{ SUM}[0] = \alpha(n,n,0)$ and $\mbox{ SUM}[1] = \alpha(n,n,1)$.
\end{proof}

\part{Truncation, Alternation, and Superposition}

Truncation, Alternation, and superposition are methods to produce polynomial bases from existing ones. Alternation and superposition form bases that do not have definite parity from ones that do. 

This has applications where the domain or range basis does not have definite parity and we wish to find a mapping between it and the Zernike Radial polynomials or other polynomials with definite parity such as Chebyshev polynomials of the first or second kinds.

Alternation and superposition cannot be combined, but each of them can be combined with truncation.

\section{Truncation}
\label{truncsec}

We now consider truncation from Definition~\ref{parcoeff} and change of basis matrices.

The following matrix is the change of basis matrix for Laguerre polynomials up to degree $x^5$ to $\{1, x, x^2, x^3, x^4, x^5\}$.
\begin{center}
$M_{ML} =$
\begin{doublespace}
\noindent\(\left[
\begin{array}{cccccccc}
1 & \phantom{-}1 & \phantom{-}1 & \phantom{-}1 & \phantom{-}1 & \phantom{-}1 \\
0 & -1 & -2 & -3& -4 & -5 \\
0 & \phantom{-}0 & \phantom{-}\frac12 & \phantom{-}\frac32 & \phantom{-}3 & \phantom{-}5 \\
0 & \phantom{-}0 & \phantom{-}0 & -\frac16 & -\frac{2}3 & -\frac53 \\
0 & \phantom{-}0 & \phantom{-}0 & \phantom{-}0 & \phantom{-}\frac{1}{24} & \phantom{-}\frac{5}{24} \\
0 & \phantom{-}0 & \phantom{-}0 & \phantom{-}0 & \phantom{-}0 & -\frac{1}{120} \\
\end{array}
\right]\)
\end{doublespace}
\end{center} By excluding the first two rows and columns and the last row and column, we form the change of basis matrix from 
\[
\{L_2(x)_{2,2}, L_3(x)_{3,2}, L_4(x)_{4,2}\} \:\; \mbox{\rm  to } \{x^2, x^3, x^4\}.
\]
 This gives the upper triangular matrix
\begin{center}
\begin{doublespace}
\noindent\(\left[
\begin{array}{ccc}
\frac12 & \phantom{-}\frac{3}2 & \phantom{-}3 \\
0 & -\frac{1}{6} & -\frac{2}{3} \\
0 & \phantom{-}0 & \phantom{-}\frac{1}{24} \\
\end{array}
\right]\)
\end{doublespace}
\end{center}
and from the third column, we have
\begin{align*}
L_4(x)_{4,2} =& \frac1{24} x^4 - \frac23 x^3 + 3 x^2.
\end{align*}

The exclusion of the first $k_1$ rows and columns of a change of basis matrix, and the exclusion of the last $k_2$ rows and columns are two optimizations discussed in Wolfram~\cite[\S 5.1]{wolfram2107}. This matrix could be used to provide a change of basis matrix from 
$\{L_2(x)_{2,2}, L_3(x)_{3,2}, L_4(x)_{4,2}\}$ to $\{b_2^2(x), b_2^3(x), b_2^4(x)\}$.

\begin{lemma} \label{counttrunc}
Let $M$ be an upper or lower triangular change of basis matrix with $b$ rows and $b$ columns. There are $\frac12 b(b+1)-1$ ways to truncate the matrix by excluding the first $k_1$ rows and columns, and the last $k_2$ rows and columns where $0< k_1 + k_2 < b$, to form another change of basis matrix.
\end{lemma}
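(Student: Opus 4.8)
The plan is to reduce the counting to an elementary count of integer pairs. A truncation in the sense of the statement is prescribed by deleting the first $k_1$ rows together with the first $k_1$ columns and the last $k_2$ rows together with the last $k_2$ columns, so it is determined by a pair of nonnegative integers $(k_1,k_2)$. Because the deleted rows and columns are chosen symmetrically, the retained entries form the principal submatrix of $M$ on the contiguous index block $\{k_1,k_1+1,\ldots,b-1-k_2\}$ (using $0$-based indexing), which is again square and upper (respectively lower) triangular. First I would record that each admissible pair genuinely yields a change of basis matrix: since $M$ is triangular with non-zero main diagonal, as established above, the principal submatrix on a contiguous block is triangular with non-zero diagonal and hence invertible, and by the optimisation of Wolfram~\cite[\S 5.1]{wolfram2107} it is the change of basis matrix between the correspondingly truncated bases. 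The condition $k_1+k_2<b$ guarantees that the retained block is nonempty, while $k_1+k_2>0$ guarantees that at least one row and column is removed, so that the result differs from $M$ itself.

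The heart of the argument is then a direct count of the pairs $(k_1,k_2)$ of nonnegative integers satisfying $0<k_1+k_2<b$. Writing $s=k_1+k_2$, the admissible values of $s$ are $1,2,\ldots,b-1$, and for each fixed $s$ the number of such pairs is exactly $s+1$, obtained by taking $k_1\in\{0,1,\ldots,s\}$ and setting $k_2=s-k_1$. Summing over $s$ gives
\[
\sum_{s=1}^{b-1}(s+1)=\frac{(b-1)b}{2}+(b-1)=\frac{(b-1)(b+2)}{2}=\frac{b(b+1)}{2}-1,
\]
which is the claimed total.

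I do not expect a deep obstacle; the only points requiring care are the bookkeeping at the two ends of the range and the correspondence between pairs and truncations. The strict inequalities $0<k_1+k_2<b$ must be checked to exclude exactly the untruncated matrix $M$ (the case $s=0$) and the degenerate empty truncation (the case $s=b$), so that the summation bounds $s=1$ to $b-1$ are correct. That distinct pairs count as distinct truncation operations is immediate, since the pair $(k_1,k_2)$ is recovered from the retained index block $\{k_1,\ldots,b-1-k_2\}$; hence the assignment from admissible pairs to truncations is a bijection and the count of pairs computed above is precisely the count of truncations.
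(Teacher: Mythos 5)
Your proposal is correct and takes essentially the same approach as the paper: both identify truncations with pairs $(k_1,k_2)$ (equivalently, contiguous diagonal blocks) and evaluate the same arithmetic series, you grouping by $s=k_1+k_2$ and the paper grouping by the resulting block size $b-s$, which yields the identical sum $\sum_{i=2}^{b} i = \frac12 b(b+1)-1$. Your additional check that each retained block is triangular with non-zero diagonal (hence again a change of basis matrix) is a sound, slightly more explicit version of the paper's remark that the main diagonal is preserved.
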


\begin{proof}
Excluding the first $k_1$ rows and columns, and the last $k_2$ rows and columns from $M$ yields a matrix with $b - k_1 - k_2$ rows and columns and whose main diagonal is the same as $m_{k_1, k_1} \cdots m_{b-1 - k_2, b-1 -k_2}$ in $M$. There are $b$ ways to form a $1 \times 1$ matrix through truncation, $b-1$ ways to form a $2 \times 2$ such matrix, and $2$ ways to form a $(b-1) \times (b-1)$ matrix.
The result follows from 
\[
\sum_{i=2}^{b} i = \frac12 b(b+1)-1.
\]
\end{proof}

\begin{definition}
Let $M$ be an upper or lower triangular $b \times b$ matrix. The term $\mbox{tr}_{b_1, b_2} M$ where $0< k_1 + k_2 < b$
means the truncated matrix formed from $M$ by excluding its first $k_1$ rows and columns, and its last $k_2$ rows and columns.
\end{definition}

\begin{theorem} \label{local}
Let $M$ be an upper or lower triangular $b \times b$ matrix and $M' = \mbox{tr}_{b_1, b_2} M$ where $0< k_1 + k_2 < b$. Then the following hold:
\begin{enumerate}
\item $M'$ is also an upper or lower triangular matrix, respectively.
\item If $N$ is also an upper or lower triangular $b \times b$ matrix, respectively, then 
\[
\mbox{tr}_{k_1, k_2} (M N) = (\mbox{tr}_{k_1, k_2} M) (\mbox{tr}_{k_1, k_2} N)
\]
\item If $M$ is an invertible matrix, then
\[
\mbox{tr}_{k_1, k_2} (M^{-1}) = (\mbox{tr}_{k_1, k_2} M)^{-1}
\]
\end{enumerate}
\end{theorem}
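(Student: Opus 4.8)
The plan is to establish the three parts in order, with parts~2 and~3 resting on part~1 and on the precise direction of the triangularity. Throughout I write $J = \{k_1, k_1+1, \ldots, b-1-k_2\}$ for the set of retained indices (using the paper's $0$-indexing), so that $\mbox{tr}_{k_1,k_2} M$ is the principal submatrix of $M$ on the rows and columns indexed by $J$, and I let $c = b - k_1 - k_2$ denote its order.

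For part~1, the key observation is that truncation selects a principal submatrix using the \emph{same} index set for rows and columns, and the induced reindexing bijection from $\{0,\ldots,c-1\}$ onto $J$ is strictly order-preserving. Hence for retained positions $i' < j'$ the corresponding original indices satisfy $i < j$, so each entry of $\mbox{tr}_{k_1,k_2} M$ lying strictly below (resp.\ above) its diagonal equals an entry of $M$ lying strictly below (resp.\ above) the diagonal of $M$. Upper or lower triangularity is therefore inherited, proving part~1.

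Part~2 is the crux. Writing $P = MN$, the $(i,j)$ entry of $\mbox{tr}_{k_1,k_2}(MN)$ is $\sum_{l=0}^{b-1} m_{i,l} n_{l,j}$ with $i,j \in J$, whereas the $(i,j)$ entry of $(\mbox{tr}_{k_1,k_2} M)(\mbox{tr}_{k_1,k_2} N)$ is the same sum restricted to $l \in J$. So it suffices to show the summands with $l \notin J$ vanish. Suppose $M$ and $N$ are upper triangular. If $l < k_1$ then $l < k_1 \leq i$, so $m_{i,l} = 0$; if $l > b-1-k_2$ then $l > b-1-k_2 \geq j$, so $n_{l,j} = 0$. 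Thus every omitted index is annihilated by one of the two triangularity conditions, and the two sums agree. The lower-triangular case is identical with the roles of $M$ and $N$ exchanged: a small $l$ is killed by $n_{l,j}=0$ and a large $l$ by $m_{i,l}=0$. The only point requiring care is getting this asymmetry right, since it is precisely the split between the left factor (killing small indices) and the right factor (killing large indices) that makes the off-range terms vanish.

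Finally, part~3 follows formally. By part~1 the inverse $M^{-1}$ of an invertible triangular matrix is triangular of the same type, so part~2 applies to $M M^{-1} = I_b$, giving $(\mbox{tr}_{k_1,k_2} M)(\mbox{tr}_{k_1,k_2} M^{-1}) = \mbox{tr}_{k_1,k_2} I_b = I_c$, since a principal submatrix of the identity is again an identity. It remains to note that $\mbox{tr}_{k_1,k_2} M$ is itself invertible: its diagonal is a sublist of the diagonal entries of $M$, all of which are nonzero because $M$ is an invertible triangular matrix, so $\mbox{tr}_{k_1,k_2} M$ has nonzero determinant. The displayed identity then forces $\mbox{tr}_{k_1,k_2}(M^{-1}) = (\mbox{tr}_{k_1,k_2} M)^{-1}$, completing the proof.
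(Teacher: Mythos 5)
Your proof is correct and follows essentially the same route as the paper: part~1 by observing that truncation takes a principal submatrix on an order-preserving index set, part~2 by entrywise comparison of the two sums, using the triangularity of both factors to annihilate every summation index outside the retained window, and part~3 by applying part~2 to $M M^{-1} = I_b$ and checking that the truncated matrix has nonzero diagonal. One small slip: in part~3 you justify the triangularity of $M^{-1}$ ``by part~1,'' but part~1 concerns truncation, not inversion; the fact you actually need --- that the inverse of an invertible upper (lower) triangular matrix is again upper (lower) triangular --- is a standard linear-algebra fact (and, for change of basis matrices, is the content of Theorem~\ref{tri-inv}), so the step stands once the citation is corrected.
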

\begin{proof}
The first property follows from the proof of Lemma~\ref{counttrunc}, above. For the second and third properties, we consider upper triangular matrices because the proofs for lower triangular matrices are similar.

The second property holds for the elements below the main diagonal because each of them is $0$ from the first property and Theorem~\ref{tri-inv}. Suppose that $i$ is the row index and $j$ is the column index of an element $v_{i,j}$ in the upper triangle of $M N$, so that $0 \leq i \leq j < b$. We have
\[
v_{i, j} = \sum\limits_{l = 0}^{b-1} m_{i, l} n_{l,j}
\] by the definition of matrix multiplication. Elements below the main diagonals are zero in both $M$ and $N$, and this equation simplifies to
\[
v_{i, j} = \sum\limits_{l = i}^{j} m_{i, l} n_{l,j}
\] This implies that elements in the first $i$ rows and columns of $M$ and $N$, and elements in the last $b-j-1$ rows and columns of $M$ and $N$ have no effect on this sum and the only elements that are used are $m_{i,i}, \ldots, m_{i,j}$ and $n_{i,j}, \ldots, n_{j,j}$. It follows that these rows and columns can be excluded where $k_1 \leq i \leq j \leq k_2$, as required.

For the proof of the third property, since $M$ is invertible, let $N = M^{-1}$. We have that $\mbox{tr}_{k_1, k_2} (M N)$ is the identity matrix with the same dimensions as $\mbox{tr}_{k_1, k_2} M$. From property 2, it follows that $\mbox{tr}_{k_1, k_2} N$ is the inverse of $\mbox{tr}_{k_1, k_2} M$, as required.
\end{proof}

The third property of Theorem~\ref{local} shows that truncation and inversion commute. This implies that given a change of basis matrix $M_{vt}$ and the coefficient function $\alpha$ or $\beta$ for its inverse $M_{tv}$, the same coefficient function can be used for every truncation of $M_{tv}$. 

Returning to the earlier example with Laguerre polynomials, the coefficient function for the inverse matrix $M_{LM}$ is given by
\[
\alpha(n, k) = (-n)_{n-k} (n -k+1)_k
\] from \cite[equation 18.18.19]{dlmf}, where $0 \leq k \leq n \leq 5$. This gives

\begin{center}
$M_{LM} =$
\begin{doublespace}
\noindent\(\left[
\begin{array}{cccccccc}
1 & \phantom{-}1 & \phantom{-}2 & \phantom{-}6 & \phantom{-}24 & \phantom{-}120 \\
0 & -1 & -4 & -18 & -96 & -600 \\
0 & \phantom{-}0 &\phantom{-} 2 & \phantom{-}18 & \phantom{-}144 & \phantom{-}1200 \\
0 & \phantom{-}0 & \phantom{-}0 & -6 & -96 & -1200 \\
0 & \phantom{-}0 & \phantom{-}0 &\phantom{-} 0 & \phantom{-}24 & \phantom{-}600 \\
0 & \phantom{-}0 &\phantom{-} 0 & \phantom{-}0 & \phantom{-}0 & -120 \\
\end{array}
\right]\)
\end{doublespace}
\end{center}

We have 
\[
\mbox{tr}_{2,1} M_{LM} = \left[\begin{array}{ccc}
2 & \phantom{-}18 & \phantom{-}144 \\
\\
0 & -6 & -96 \\
\\
0 & \phantom{-}0 & \phantom{-}24
\end{array} \right]
\] 
Hence, 
\begin{align*}
x^4 =& 24 L_4(x)_{4,2} -96 L_3(x)_{3,2} + 144 L_2(x)_{2,2}\\
=& (x^4 - 16x^3 + 72 x^2) - 16(-x^3 + 9x^2) + 72 x^2
\end{align*}

Similarly, from the last column of $\mbox{tr}_{1,1} M_{LM}$, we have
\begin{align*}
x^4 =& 24 L_4(x)_{4,1} -96 L_3(x)_{3,1} + 144 L_2(x)_{2,1} -96 L_1(x)_{1,1}\\
=& (x^4 - 16x^3 + 72 x^2 -96x) - 16(-x^3 + 9x^2 -18x ) + 72 (x^2 - 4x) + 96 x\\
\end{align*}

The matrix $\mbox{tr}_{1,1} M_{LM}$ is the inverse of
\[
\mbox{tr}_{2,1} M_{ML} = 
\left[
\begin{array}{ccc}
\frac12 & \phantom{-}\frac{3}2 & \phantom{-}3 \\
\\
0 & -\frac{1}{6} & -\frac{2}{3} \\
\\
0 & \phantom{-}0 & \phantom{-}\frac{1}{24} \\
\end{array}
\right]
\]

The matrix $\mbox{tr}_{2,1} M_{LM}$ is the change of basis matrix from the exchange basis of $\{x^2, x^3, x^4\}$ to the basis of truncated Laguerre polynomials 
\[
\{L_2(x)_{2,2}, L_3(x)_{3,2}, L_4(x)_{4,2}\}.
\]

We can use this matrix to change basis from $B = \{b_2^2(x), b_2^3(x), b_2^4(x)\}$ to these truncated Laguerre polynomials by matrix multiplication
\[
M_{LM} M_{MB} = 
\left[\begin{array}{ccc}
2 & \phantom{-}18 & \phantom{-}144 \\
\\
0 & -6 & -96 \\
\\
0 & \phantom{-}0 & \phantom{-}24
\end{array} \right]
\left[\begin{array}{ccc}
1 & \phantom{-}3 & \phantom{-}6 \\
\\
0 & -3 & -12 \\
\\
0 & \phantom{-}0 & \phantom{-}6
\end{array} \right]
\]

This product is
\[
M_{LB} = 
\left[\begin{array}{ccc}
2 &-48 & \phantom{-}660 \\
\\
0 & \phantom{-}18 & -504 \\
\\
0 & \phantom{-}0 & \phantom{-}144
\end{array} \right]
\]
and we have, for example, from the third column that
\begin{align*}
b_2^4(x) =& 6x^4 -12 x^3 + 6x^2\\
=& 144 L_4(x)_{4,2} - 504 L_3(x)_{3,2} + 660 L_2(x)_{2,2}\\
=& \frac{144}{24}(x^4 - 16 x^3 + 72 x^2) - \frac{504}{6}(-x^3 + 9x^2) + \frac{660}{2} x^2.
\end{align*}

\section{Alternation}

Alternation creates a polynomial basis without definite parity from polynomials that have definite parity. This enables changes of basis from Bernstein polynomials to Zernike Radial polynomials, or Chebyshev polynomials of the first kind to Bernstein polynomials, for example.

An example of alternation starts with the descending basis 
\[
\{R_2^2(x), R_4^2(x), R_6^2(x), R_8^2(x)\}
\] which has the following change of basis matrix to $\{x^2, x^4, x^6, x^8\}$:

\begin{doublespace}
\begin{equation*}
\begin{bmatrix}
\phantom{-}1 & -3 & \phantom{-}6 & -10 \\
\phantom{-}0 & \phantom{-}4 & -20 & \phantom{-}60 \\
\phantom{-}0 & \phantom{-}0 & \phantom{-}15 & -105 \\
\phantom{-}0 & \phantom{-}0 & \phantom{-}0 & \phantom{-}56 \\
\end{bmatrix}
\end{equation*}
\end{doublespace}

For example, $R_6^2(x) = 15 x^6 - 20 x^4 + 6 x^2$ from the third column.
Each polynomial in the basis has even parity. Alternation includes polynomials with odd parity in the basis, specifically, $\{R_3^3(x), R_5^3(x), R_7^3(x)\}$ and the change of basis matrix for the mapping to $\{x^2, x^3, x^4, x^5, x^6, x^7, x^8\}$ is

\begin{center}
\begin{doublespace}
\noindent\(\left[
\begin{array}{ccccccc}
1 & 0 & -3 & 0 & 6 & 0 & -10 \\
0 & 1 & 0 & -4 & 0 & 10 & 0 \\
0 & 0 & 4 & 0 & -20 & 0 & 60 \\
0 & 0 & 0 & 5 & 0 & -30 & 0 \\
0 & 0 & 0 & 0 & 15 & 0 & -105 \\
0 & 0 & 0 & 0 & 0 & 21 & 0 \\
0 & 0 & 0 & 0 & 0 & 0 & 56 \\
\end{array}
\right]\)
\end{doublespace}
\end{center}
The columns of this matrix alternate between the original basis and the basis with polynomials of odd parity. 

\begin{definition}
Suppose that $f(x)_{u,l}$ is a polynomial over $\mathbb{R}(x)$ with definite parity. An alternating basis is a basis in these polynomials that spans the same vector space as
\[
\{x^m, x^{m+1}, \ldots, x^{n-1}, x^n\}.
\] where $0 \leq m \leq n$ and $m$ and $n$ can have the same or different parity.

A descending alternating basis is either
\[\{f(x)_{m,m}, f(x)_{m+1, m+1}, f(x)_{m+2, m}, f(x)_{m+3, m+1}, \ldots, f(x)_{n-1, m+1}, f(x)_{n,m}\}
\] if $n$ and $m$ have the same parity, or
\[\{f(x)_{m,m}, f(x)_{m+1, m+1}, f(x)_{m+2, m}, f(x)_{m+3, m+1}, \ldots, f(x)_{n-1, m}, f(x)_{n,m+1}\}
\] if $n$ and $m$ do not have the same parity.

\medskip
An ascending alternating basis is either 
\[
\{f(x)_{n,m}, f(x)_{n-1,m+1}, f(x)_{n,m+2}, f(x)_{n-1,m+3}\ldots, f(x)_{n-1, n-1}, f(x)_{n,n}\}
\] if $n$ and $m$ have the same parity, or
\[
\{f(x)_{n-1,m}, f(x)_{n,m+1}, f(x)_{n-1,m+2}, f(x)_{n,m+3}\ldots, f(x)_{n-1, n-1}, f(x)_{n,n}\}
\] if $n$ and $m$ do not have the same parity.
\end{definition}
\medskip

In the previous example, the alternating basis is 
\[
\{R_2^2(x), R_4^2(x), R_6^2(x), R_8^2(x)\} \cup \{R_3^3(x), R_5^3(x), R_7^3(x)\}
\] which is a descending basis with $n = 8$ and $m =2$.

\medskip

The following definition names change of basis matrices where the domain or range basis is an alternating basis. 
\begin{definition}
A change of basis matrix is an alternating matrix if its domain basis or its range basis is an alternating basis.
\end{definition}

An alternating matrix has zero elements when $| i -j|$ is odd, where $i$ is the row index, $j$ is the column index and $0 \leq i,j \leq n-m$. The elements of its leading diagonal are non-zero.

For example, the change of basis matrix from the ascending alternating basis $\{R_4^0(x), R_5^1(x), R_4^2(x), R_5^3(x), R_4^4(x), R_5^5(x)\}$ to the descending alternating basis $\{T_0(x), T_1(x), \ldots, T_5(x)\}$ is the alternating matrix

\begin{center}
\begin{doublespace}
\noindent\(\left[
\begin{array}{cccccc}
\frac{1}{4} & 0 & 0 & 0 & \frac{3}{8} & 0 \\
0 & \frac{1}{4} & 0 & \frac{1}{8} & 0 & \frac{5}{8} \\
0 & 0 & \frac{1}{2} & 0 & \frac{1}{2} & 0 \\
0 & \frac{1}{8} & 0 & \frac{9}{16} & 0 & \frac{5}{16} \\
\frac{3}{4} & 0 & \frac{1}{2} & 0 & \frac{1}{8} & 0 \\
0 & \frac{5}{8} & 0 & \frac{5}{16} & 0 & \frac{1}{16} \\
\end{array}
\right]\)
\end{doublespace}
\end{center} 
From the fourth column, we have
\begin{align*}
R_5^3(x) =& \frac{5}{16} T_5(x) + \frac{9}{16} T_3(x) + \frac18 T_1(x)\\
=& 5x^5 - 4x^3.
\end{align*}

\subsubsection*{Descending Bases}

We consider the general forms of change of basis matrices $M_{vt}$ where either $v$ or $t$ is a descending alternating basis that spans the same vector space as $\{x^m, \ldots, x^n\}$ where $0 \leq m \leq n$ and $t$ or $v$, respectively, also spans this vector space.

If $n$ and $m$ have the same parity, let $l = \frac{n-m}2$. The general form of the change of basis matrix $M_{vt}$ in this case is as follows:

\begin{figure}[H]
\begin{gather*}
\begin{bmatrix}
\beta(m,m, 0) & \cdots & \beta(n-2,m, l -1) &0 & \beta(n,m, l)\\
0 & \ldots & 0& \beta(n-1, m+1, l -1) & 0\\
\ldots & \ldots & \ldots & \ldots & \ldots \\
0 & \cdots & 0 & \beta(n-1, m+1, 1)& 0\\
0 &\cdots & \beta(n-2, m, 0) & 0& \beta(n ,m, 1)\\
0 & \cdots & 0 & \beta(n-1, m+1, 0)& 0\\
0 & \cdots & 0 & 0 & \beta(n, m, 0)
\end{bmatrix}
\end{gather*} 
\caption{Change of Basis Matrix: Descending Basis with Equal Parities} \label{MDAltsame}
\end{figure}

The elements of the matrix are
\[
m_{i,j} = 
\left\{
\begin{array}{ll} 
\beta(m+j , m, \frac{j-i}2 ) & \mbox{if $j \geq i$, $i$ is even and $j$ is even}\\
\beta(m+j, m+1,\frac{j-i}2 ) & \mbox{if $j \geq i$, $i$ is odd and $j$ is odd}\\
0 & \mbox{if $j-i$ is odd}\\
0 & \mbox{if $j< i$}
\end{array}
\right.
\] where its indices are $0 \leq i,j \leq n - m$
\bigskip

If $n$ and $m$ do not have the same parity, let $l = \frac{n-m -1}2$. The general form of the change of basis matrix $M_{vt}$ in this case is as follows:
\begin{figure}[H]
\begin{gather*}
\begin{bmatrix}
\beta(m,m, 0) & \cdots & 0 &\beta(n-1, m, l ) & 0\\
0 & \ldots & \beta(n-2,m+1, l -1)& 0 & \beta(n,m+1, l)\\
\ldots & \ldots & \ldots & \ldots & \ldots \\
0 & \cdots & 0 & \beta(n-1, m, 1)& 0\\
0 &\cdots & \beta(n-2, m+1, 0) & 0& \beta(n ,m+1, 1)\\
0 & \cdots & 0 & \beta(n-1, m, 0)& 0\\
0 & \cdots & 0 & 0 & \beta(n, m+1, 0)
\end{bmatrix}
\end{gather*} 
\caption{Change of Basis Matrix: Descending Basis with Unequal Parities} \label{MDAltdiffer}
\end{figure}

The elements of this matrix are
\[
m_{i,j} = 
\left\{
\begin{array}{ll} 
\beta(m+j , m+1, \frac{j-i}2 ) & \mbox{if $j \geq i$, $i$ is odd and $j$ is odd}\\
\beta(m+j, m,\frac{j-i}2 ) & \mbox{if $j \geq i$, $i$ is even and $j$ is even}\\
0 & \mbox{if $j-i$ is odd}\\
0 & \mbox{if $j< i$}
\end{array}
\right.
\] where its indices are $0 \leq i,j \leq n - m$
\medskip

In both cases, the upper triangle is a `chequered pennant', i.e., $m_{i,j} = 0$ if $j-i$ is odd, and so every other diagonal above the main diagonal has elements that are zero. 

\begin{equation}
\alpha_3(n, m, k)= \sum_{\mathclap{\substack{l = 0 \\k - l \: \mbox{\small even}}}}^{k} \beta_1(n - l, m + (l \bmod 2), \frac{k-l}2) r(l)
\end{equation} where $0 \leq k \leq n-m$, 
where $0 \leq i,j \leq n-m$.

\subsubsection*{Ascending Bases}

We define the general forms of change of basis matrices $M_{vt}$ where either $v$ or $t$ is an ascending alternating basis that spans the same vector space as $\{x^m, \ldots, x^n\}$ where $0 \leq m \leq n$ and $t$ or $v$, respectively, also spans this vector space.

If $n$ and $m$ have the same parity, let $l = \frac{n-m}2$. The form of the change of basis matrix in this case is as follows.

\begin{figure}[H]
\begin{gather*}
\begin{bmatrix}
\beta(n,m, l) & 0& \cdots & 0& 0 \\
0 &\beta(n-1,m+1, l-1) & \cdots & 0& 0 \\
\vdots & \vdots & \vdots & \vdots & \vdots \\
0 & \beta(n-1, m+1, 1) & \cdots & 0& 0\\
\beta(n,m,1) & 0& \cdots & 0 & 0 \\
0 & \beta(n-1, m+1, 0) & \cdots & \beta(n-1, n-1, 0)& 0\\
\beta(n,m,0) & 0 & \cdots & 0 & \beta(n, n, 0)
\end{bmatrix}
\end{gather*} 
\caption{Change of Basis Matrix: Ascending Basis with Equal Parities} \label{MAAltsame}
\end{figure}

The elements of this matrix are
\begin{equation}
m_{i,j} = 
\left\{
\begin{array}{ll} 
\beta(n, m+j, \lfloor \frac{n-m-i}2 \rfloor) & \mbox{if $i-j$ is even and $j$ is even}\\
\beta(n-1, m+j, \lfloor \frac{n-m-i}2 \rfloor ) & \mbox{if $i-j$ is even and $j$ is odd}\\
0 & \mbox{if $i-j$ is odd}\\
0 & \mbox{if $j> i$}
\end{array} \label{AAeq}
\right.
\end{equation} where $0 \leq i,j \leq n-m$.
\smallskip 

For example, if $n=9$, $m = 3$ and $\beta$ is the coefficient function of Theorem~\ref{beta-RM} for the mapping from the monomials to Zernike Radial polynomials, then the alternating change of basis matrix is
\begin{center}
\begin{doublespace}
\noindent\(\left[
\begin{array}{ccccccc}
-\frac{1}{20} & 0 & 0 & 0 & 0 & 0 & 0 \\
0 & \frac{1}{15} & 0 & 0 & 0 & 0 & 0 \\
\frac{1}{4} & 0 & \frac{1}{21} & 0 & 0 & 0 & 0 \\
0 & -\frac{2}{5} & 0 & -\frac{1}{7} & 0 & 0 & 0 \\
-\frac{7}{10} & 0 & -\frac{1}{3} & 0 & -\frac{1}{8} & 0 & 0 \\
0 & \frac{4}{3} & 0 & \frac{8}{7} & 0 & 1 & 0 \\
\frac{3}{2} & 0 & \frac{9}{7} & 0 & \frac{9}{8} & 0 & 1 \\
\end{array}
\right]\)
\end{doublespace}
\end{center}

From the third column,
\begin{align*}
x^5 =& \frac97 R_9^9(x) - \frac13 R_9^7(x) + \frac1{21} R_9^5(x)\\
=& \frac97 x^9 - \frac13 (9 x^9 - 8x^7) + \frac1{21} (36 x^9 - 56 x^7 + 21 x^5).
\end{align*}

If $n$ and $m$ have the different parities, let $l = \frac{n-m -1}2$. The form of the change of basis matrix in this case is as follows.

\begin{figure}[ht]
\begin{gather*}
\begin{bmatrix}
\beta(n-1,m, l) & 0& \cdots & 0& 0 \\
0 &\beta(n,m+1, l) & \cdots & 0& 0 \\
\vdots & \vdots & \vdots & \vdots & \vdots \\
\beta(n-1,m,1) & 0& \cdots & 0& 0\\
0 & \beta(n, m+1, 1) & \cdots & 0 & 0 \\
\beta(n-1,m,0) &0 & \cdots & \beta(n-1, n-1, 0)& 0\\
0 & \beta(n, m+1, 0) & \cdots & 0 & \beta(n, n, 0)
\end{bmatrix}
\end{gather*} 
\caption{Change of Basis Matrix: Ascending Basis with Unequal Parities} \label{MAAltdiffer}
\end{figure}

The elements of this matrix are
\begin{equation}
m_{i,j} = 
\left\{
\begin{array}{ll} 
\beta(n-1, m+j, \lfloor\frac{n-m-i}2 \rfloor) & \mbox{if $i-j$ is even and $j$ is even}\\
\beta(n, m+j, \lfloor \frac{n-m-i}2 \rfloor ) & \mbox{if $i-j$ is even and $j$ is odd}\\
0 & \mbox{if $i-j$ is odd}\\
0 & \mbox{if $j> i$} \label{AAneq}
\end{array}
\right.
\end{equation} where $0 \leq i,j \leq n-m$.

\begin{equation} \label{alpha-new2}
\alpha_3(n, m, k)= \sum_{\mathclap{\substack{l = 0 \\k - l \: \mbox{\small even}}}}^{v-k} \beta_1(n - (l \bmod 2), m+l, \lfloor \frac{k}2\rfloor) r(n-m- l)
\end{equation} where $0 \leq k \leq n-m$, 

\subsubsection*{The Inverse Matrix}

The following theorem shows that if the inverse $\beta^{-1}$ of a coefficient function is known for a descending or ascending basis with definite parity, then $\beta^{-1}$ can be used to find the inverse of an alternating change of basis matrix.

\begin{theorem} \label{inv-alt}
Let $M_{vt}$ be a change of basis matrix for a basis $t$ that is either a descending alternating basis, or an ascending alternating basis. The general forms of these matrices are in Figures~\ref{MDAltsame} -- \ref{MAAltdiffer}. 

The inverse change of basis matrix $M_{tv}$ has the same form as $M_{vt}$ except that the coefficient function $\beta$ is replaced by its inverse, the mapping from $v$ to $t$.
\end{theorem}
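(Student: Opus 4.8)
The plan is to reduce the four alternating cases to the definite-parity cases of Figures~\ref{MDesP} and~\ref{MDAscP} by a single simultaneous permutation of rows and columns. The structural fact driving everything is the chequered pattern recorded just below each of Figures~\ref{MDAltsame}--\ref{MAAltdiffer}, namely that $m_{i,j}=0$ whenever $i-j$ is odd: the matrix $M_{vt}$ couples only indices of the same parity. First I would introduce the permutation $\sigma$ of the index set $\{0,1,\ldots,n-m\}$ that lists the even indices in increasing order followed by the odd indices in increasing order, with associated permutation matrix $P$. Because $\sigma$ respects the ordering within each parity class, conjugation preserves triangularity, and the chequered pattern forces $P M_{vt} P^{-1} = A \oplus B$, a block-diagonal matrix whose first block $A$ is the restriction of $M_{vt}$ to even indices and whose second block $B$ is its restriction to odd indices.

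Next I would identify the two blocks. The even-indexed columns of a descending alternating basis are exactly the even-parity sub-basis $\{f(x)_{m,m}, f(x)_{m+2,m},\ldots\}$, a descending basis of definite parity spanning $\{x^m, x^{m+2},\ldots\}$; the odd-indexed columns form the odd-parity sub-basis of minimum degree $m+1$ spanning $\{x^{m+1}, x^{m+3},\ldots\}$ (and symmetrically for ascending bases, using $\lfloor \tfrac{n-m-i}{2}\rfloor$). Reading the element formulas accompanying Figures~\ref{MDAltsame}--\ref{MAAltdiffer}, the entries of $A$ are the values $\beta(\cdot,m,\cdot)$ and those of $B$ are $\beta(\cdot,m+1,\cdot)$, so that $A$ and $B$ are themselves change of basis matrices of the definite-parity form of Equation~(\ref{mDD}) (descending) or Equation~(\ref{mAD}) (ascending). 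The only difference among the four cases is which parity class carries minimum degree $m$ and which carries $m+1$.

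Then I would invert. Since $M_{tv}=M_{vt}^{-1}$ by Lemmas~1 and~2 of Wolfram~\cite{wolfram2107}, and the inverse of a block-diagonal matrix inverts each block, I obtain
\[
M_{tv} = P^{-1}\,(A^{-1}\oplus B^{-1})\,P .
\]
Each block is the inverse of a definite-parity change of basis matrix, hence is again a definite-parity change of basis matrix for the reverse direction $v\to t$; by the inverse structure already established for Figures~\ref{MDesP} and~\ref{MDAscP}, its entries are the values of the inverse coefficient function $\beta^{-1}$ (the coefficient function of the $v\to t$ mapping) in the same triangular positions. Conjugating back by $P^{-1}$ returns these $\beta^{-1}$ values to precisely the chequered slots of Figures~\ref{MDAltsame}--\ref{MAAltdiffer}, with $m$ and $m+1$ playing the same roles as before. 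This is exactly the claim that $M_{tv}$ has the same form as $M_{vt}$ with $\beta$ replaced by $\beta^{-1}$.

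The four subcases are handled uniformly by this single argument, so I would present it once and note the obvious substitutions. I expect the main obstacle to be the index bookkeeping in the identification step: one must check that the parity-split arguments in the alternating element formulas---the $\tfrac{j-i}{2}$ and $\lfloor\tfrac{n-m-i}{2}\rfloor$ expressions together with the choice of $m$ versus $m+1$ as the second argument of $\beta$---translate, after applying $\sigma$, into the standard definite-parity indexing of Equations~(\ref{mDD}) and~(\ref{mAD}), so that the blocks are genuinely of the form of Figures~\ref{MDesP}/\ref{MDAscP} and $\beta^{-1}$ lands in the correct entries. Everything else is the formal manipulation of block-diagonal matrices together with the already-proven inverse structure for definite parity.
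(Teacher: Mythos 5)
Your proof is correct, but it takes a genuinely different route from the paper's. The paper proves the theorem by an element-wise induction: it performs back substitution (forward substitution in the ascending case) directly on the alternating matrix, shows that the zero entries in positions with $j-i$ odd never contribute to the substitution equations, and observes that the surviving equations are identical to those for inverting the non-alternating definite-parity matrix of Figure~\ref{MDesP}; the unequal-parity cases and the passage from one column to the next are handled by invoking the commutativity of truncation and inversion (Theorem~\ref{local}). You instead exploit the chequered pattern structurally: conjugating by the parity-sorting permutation turns $M_{vt}$ into a block-diagonal matrix $A \oplus B$ whose blocks are exactly the definite-parity matrices of equations~(\ref{mDD})/(\ref{mAD}) for the even and odd sub-bases (with minimum degrees $m$ and $m+1$), so inversion reduces to blockwise inversion plus the framework already established in Section~\ref{CoBS}, and conjugating back lands the $\beta^{-1}$ values precisely in the chequered slots. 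Your argument buys uniformity---all four cases (ascending/descending, equal/unequal parity) follow from one computation, with no induction and no appeal to Theorem~\ref{local}---and it makes the underlying reason for the theorem transparent: an alternating basis is the interleaving of two independent definite-parity bases, and the change of basis problem decouples accordingly. What the paper's induction buys is a self-contained verification at the level of matrix entries, exhibiting explicitly the back-substitution equations that your block identification must (and does) reproduce; the index bookkeeping you flag as the main obstacle---checking that the arguments $\frac{j-i}{2}$ and $\lfloor\frac{n-m-i}{2}\rfloor$ together with the $m$ versus $m+1$ choices translate under the permutation into the standard indexing of equations~(\ref{mDD}) and~(\ref{mAD})---does go through in all four cases, so there is no gap in your approach.
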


\begin{proof}
It suffices to consider a descending alternating basis and an upper triangular change of basis matrix as in Figures~\ref{MDAltsame} and \ref{MDAltdiffer}. A similar proof applies for an ascending alternating basis and a lower diagonal change of basis matrix.

From Theorem~\ref{tri-inv}, $M_{tv}$ exists and is an upper triangular matrix. Consider a change of basis matrix between two descending bases with definite parity without alternation. This matrix has the following form.
\medskip

\begin{figure}[ht]
\resizebox{\textwidth}{!}{%
$\displaystyle
\left[
\begin{array}{rrrrr}
\beta(m,m, 0) & \cdots & \beta(n-4,m, l -2) & \beta(n-2, m, l -1)& \beta(n,m, l)\\
0 & \ldots & \beta(n-4, m, l -1)& \beta(n-2, m, l -2) & \beta(n ,m, l-1)\\
\ldots & \ldots & \ldots & \ldots & \ldots \\
0 & \cdots & \beta(n-4, m, 1) & \beta(n-2, m, 2)& \beta(n ,m, 3)\\
0 &\cdots & \beta(n-4, m, 0) & \beta(n-2, m, 1)& \beta(n ,m, 2)\\
0 & \cdots & 0 & \beta(n-2, m, 0)& \beta(n ,m, 1) \\
0 & \cdots & 0 & 0 & \beta(n, m, 0)
\end{array} 
\right]
$} 
\end{figure}
\medskip

To find the inverse coefficient function $\beta^{-1}$, we can use back substitution. We shall show from back substitution that the inverse of 
an alternating change of basis matrix has the same inverse coefficient function. The zero elements in the upper triangle of the alternating change of basis matrix, $m_{i,j} = 0$ where $j-i$ is odd and $j \geq i$, have no effect on back substitution.

This can be shown by mathematical induction. 
In the base case, suppose $n$ and $m$ have the same parity. We have
$\beta(n,m,0) m_{j,j} = 1$ where $m_{j,j}$ is an element of $M_{tv}$. This gives $m_{j,j} = \frac1{\beta(n,m,0)}$. This is equal to $\beta^{-1}(n,m,0)$ from finding the inverse of the matrix above. If $n$ and $m$ do not have the same parity, then $n$ and $m+1$ have the same parity. From the change of basis matrix in Figure~\ref{MDAltdiffer}, we have that $\mbox{tr}_{1,0} M_{vt}$ is a change of basis matrix for a descending basis with equal parities. By property 3 of Theorem~\ref{local} this truncation and inversion are commutative. Hence, the proof step for equal parities applies and $m_{j,j} = \beta^{-1}(n,m+1, 0)$, as required.

The induction hypothesis is that the result holds for all $l: 0 \leq l < h \leq n-m$. We first consider the case that $n$ and $m$ have equal parity. The element $x = m_{n-m-h,n-m}$ of $M_{tv}$ is an unknown.
If $h$ is odd, then the element in row $n-m-h$ and column $n-m$ in $M_{vt}$ is zero, and so is every other element in that row in columns with even column indices. In column $n-m$ of $M_{tv}$, elements in rows with odd row indices are zero by the induction hypothesis. Hence the dot product of row $n-m-h$ of $M_{vt}$ with column $n-m$ in $M_{tv}$ is zero.

If $h$ is even, then the element in row $n-m-h$ and column $n-m$ in $M_{vt}$ is $\beta(n,m, \frac{h}2 )$. The dot product of row $n-m-h$ of $M_{vt}$ with column $n-m$ in $M_{tv}$ is

\[
\beta(n,m, 0) \; x + \sum\limits_{k=0}^{\frac{h}2 -1} \beta(n,m, \frac{h}2 -k) \: \beta^{-1}(n,m,k) = 0
\] by the induction hypothesis. The elements in row $n-m-h$ and column $n-m$ that have different parities and are zero occur in the same terms in the dot product. This sum is zero because it is the entry in row $n-m-h$ and column $n-m$ of the identity matrix, and $h > 0$ by the induction hypotheses.

This is the same expression as that for finding the element in the inverse of the above matrix by back substitution in row $ \frac{n-m-h}2 $ and column $\frac{n-m}2$. This element is $x= \beta^{-1}(n,m, \frac{h}2 )$, as required.

The result holds for $l=0$ and assuming the induction hypothesis, it holds for $l = h$. Hence, by the principle of mathematical induction it holds for all $l: 0 \leq l \leq n-m$.

If $n$ and $m$ have different parities, then $\mbox{tr}_{1,0} M_{vt}$ is a change of basis matrix for a descending basis with equal parities. By property 3 of Theorem~\ref{local} truncation and inversion are commutative. Hence, the induction proof step for equal parities applies for $\mbox{tr}_{1,0} M_{vt}$ and $m_{n-m-1, n-m-1}$ of $M_{tv}$ is $\beta^{-1}(n,m+1, \frac{h}2)$, as required.

The proof can be repeated for column $n-m-1$ of $M_{vt}$ from $\mbox{tr}_{0,1}M_{vt}$ by truncating the last column and last row of $M_{vt}$ to form another similar upper triangular change of basis matrix with the other parity. This truncation has no effect on finding the remainder of the inverse from the third property of Theorem~\ref{local}. After $n-m$ such truncations in total, the inverse of the $1 \times 1$ matrix containing $\beta(m,m,0)$ is found which has the element $\beta^{-1}(m,m,0)$.
\end{proof}

\begin{cor} \label{corto2}
With the assumptions of Theorem~\ref{tri-inv} and $d=1$, let the $k_1$ ascending bases or $k_2$ descending bases be alternating bases.

The set of change of basis matrices of the form $M_{vt}$ where $v,t \in S$ is a set of alternating matrices and it is a connected groupoid of order $|S|^2$ with matrix multiplication and matrix inverse as operations.

The set of change of basis matrices of the form $M_{vt}$ where both $v$ and $t$ are ascending alternating bases is a connected sub-groupoid of order $k_1^2$ of lower-triangular alternating matrices. If both $v$ and $t$ are descending alternating bases, the set of change of basis matrices is a connected sub-groupoid of order $k_2^2$ of upper-triangular alternating matrices.
\end{cor}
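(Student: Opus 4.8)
The plan is to inherit almost all of the structure directly from the proof of Theorem~\ref{tri-inv} and to add only the single new observation that the alternating (chequered-pennant) pattern survives the two groupoid operations. First I would invoke Wolfram~\cite[Theorem 1]{wolfram2107} exactly as in Theorem~\ref{tri-inv}: since every member of $S$ is a basis of the same space $V$, the set of all change of basis matrices $M_{vt}$ with $v,t\in S$ is automatically a connected groupoid of order $|S|^2$ under matrix multiplication of the form $M_{vr}M_{rt}$ and matrix inversion $M_{vt}^{-1}=M_{tv}$. This supplies the groupoid axioms, the identity morphisms $M_{vv}$, and connectedness for free, so no work is needed there.

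Next I would verify that each such $M_{vt}$ is an alternating matrix. Because $d=1$ and every basis in $S$ is an alternating basis, each $M_{vt}$ has one of the forms displayed in Figures~\ref{MDAltsame}--\ref{MAAltdiffer}, and hence by the definition of an alternating matrix satisfies $m_{i,j}=0$ whenever $|i-j|$ is odd. The crux of the corollary is then to show this property is closed under the two operations. For inversion this is precisely Theorem~\ref{inv-alt}, which gives that $M_{tv}$ is again alternating. For multiplication I would give the short parity argument: writing $(M_{vr}M_{rt})_{i,j}=\sum_{l} (M_{vr})_{i,l}(M_{rt})_{l,j}$, a surviving summand forces both $i-l$ and $l-j$ to be even, whence their sum $i-j$ is even; thus the product vanishes whenever $i-j$ is odd and is again alternating.

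Finally, for the two triangular sub-groupoids I would restrict $S$ to its $k_1$ ascending alternating bases and, separately, to its $k_2$ descending alternating bases. In the ascending case every $M_{vt}$ is lower-triangular by Figures~\ref{MAAltsame} and \ref{MAAltdiffer} and alternating by the previous paragraph; in the descending case every $M_{vt}$ is upper-triangular by Figures~\ref{MDAltsame} and \ref{MDAltdiffer} and alternating. Triangularity is preserved by the groupoid operations exactly by the argument already used in Theorem~\ref{tri-inv}, and the alternating property is preserved by the parity argument together with Theorem~\ref{inv-alt}; connectedness and the orders $k_1^2$ and $k_2^2$ then follow verbatim from the corresponding part of Theorem~\ref{tri-inv}.

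The only genuinely new step is the closure of the chequered-pennant pattern under matrix multiplication, and I expect that short parity argument to be the one point requiring care; every other ingredient is quoted directly from Theorem~\ref{tri-inv}, Theorem~\ref{inv-alt}, and Wolfram's Theorem~1.
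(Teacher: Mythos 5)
Your parity argument for products (a surviving summand in $\sum_{l}(M_{vr})_{i,l}(M_{rt})_{l,j}$ forces $i-l$ and $l-j$ to be even, hence $i-j$ even) is correct, and it is exactly the detail the paper dismisses as ``straightforward''; your treatment of the two triangular sub-groupoids is also sound. The genuine gap is in the first claim, the full groupoid of order $|S|^2$. When $v$ is an ascending alternating basis and $t$ is a descending one (or vice versa), the matrix $M_{vt}$ does \emph{not} have any of the forms in Figures~\ref{MDAltsame}--\ref{MAAltdiffer}: those figures are the triangular forms that arise only when both bases are of the same kind. The paper's own example---the map from the ascending alternating basis $\{R_4^0(x), R_5^1(x), \ldots, R_5^5(x)\}$ to the descending alternating basis $\{T_0(x),\ldots,T_5(x)\}$---is alternating but visibly not triangular. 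So your appeal to the figures does not establish that these mixed matrices are alternating, and for the same reason you cannot invert them ``precisely by Theorem~\ref{inv-alt}'': that theorem, both in its statement (via the figures) and in its proof by forward/back substitution, concerns only triangular alternating matrices.

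The missing idea is the paper's factorization through the monomial exchange basis. Writing $M_{vt}=M_{vr}M_{rt}$ with $r=\{x^m,\ldots,x^n\}$, each factor is a change of basis between a single alternating basis and the monomials, hence a \emph{triangular} alternating matrix, and your parity argument then shows $M_{vt}$ is alternating; likewise $M_{vr}^{-1}$ and $M_{rt}^{-1}$ are triangular alternating by Theorem~\ref{inv-alt}, so $M_{vt}^{-1}=M_{rt}^{-1}M_{vr}^{-1}$ is alternating by the same parity argument. With that substitution in your second paragraph the proof goes through; everything else (groupoid axioms, connectedness, and the orders $|S|^2$, $k_1^2$, $k_2^2$) is correctly inherited from Wolfram's Theorem~1 and Theorem~\ref{tri-inv}, just as in the paper.
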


\begin{proof}
From Theorem~\ref{inv-alt}, an inverse of a lower- or upper-triangular alternating matrix is also a lower- or upper-triangular alternating matrix. It is straightforward to show that the product of alternating matrices is an alternating matrix, and lower- or upper-triangularity is invariant under matrix product.
Alternation is also invariant under inverse in general. From the change of basis equation (\ref{CoB}), we can always form
\begin{equation*}
M_{vt} = M_{vr} M_{r t}
\end{equation*} where $r = \{x^m, x^{m+1}, \ldots, x^{n}\}$ and $M_{vr}$ and $M_{r t}$ are triangular alternating matrices. It follows that $M_{vr}^{-1}$ and $M_{r t}^{-1}$ are alternating triangular matrices and so $M_{vt}^{-1} = M_{r t}^{-1} M_{vr}^{-1}$ is an alternating matrix.
\end{proof}

The previous example of a change of basis with a descending alternating basis had $n=8$ and $m=2$, which have equal parities. The inverse of this change of basis matrix is for the mapping from $\{x^2, x^3, \ldots, x^7, x^8\}$ to 
\[
\{R_2^2(x), R_4^2(x), R_6^2(x), R_8^2(x)\} \cup \{R_3^3(x), R_5^3(x), R_7^3(x)\}
\] and it is

\begin{center}
\begin{doublespace}
\noindent\(\left[
\begin{array}{ccccccc}
1 & 0 & \frac{3}{4} & 0 & \frac{3}{5} & 0 & \frac{1}{2} \\
0 & 1 & 0 & \frac{4}{5} & 0 & \frac{2}{3} & 0 \\
0 & 0 & \frac{1}{4} & 0 & \frac{1}{3} & 0 & \frac{5}{14} \\
0 & 0 & 0 & \frac{1}{5} & 0 & \frac{2}{7} & 0 \\
0 & 0 & 0 & 0 & \frac{1}{15} & 0 & \frac{1}{8} \\
0 & 0 & 0 & 0 & 0 & \frac{1}{21} & 0 \\
0 & 0 & 0 & 0 & 0 & 0 & \frac{1}{56} \\
\end{array}
\right]\)
\end{doublespace}
\end{center}

From equation~(\ref{RM-desc}), the inverse mapping $\beta^{-1}$ is 
\[
\beta^{-1}(n, m, k) = \frac{n - 2k + 1}{n - k + 1}\frac{{n \choose k}}{{n \choose {\frac{n-m}2}}}
\] where $0 \leq k \leq \frac{n-m}2$, and we have that the matrix elements are defined by
\[
m_{i,j} = 
\left\{
\begin{array}{ll} 
\beta^{-1}(2+j , 2, \frac{j-i}2 ) & \mbox{if $j-i$ is even and $6-j$ is even}\\
\beta^{-1}(2+j, 3,\frac{j-i}2 ) & \mbox{if $j-i$ is even and $6-j$ is odd}\\
0 & \mbox{if $j-i$ is odd}\\
0 & \mbox{if $j< i$}
\end{array}
\right.
\] where $0 \leq i,j \leq 6$, following Theorem~\ref{inv-alt}. The elements original matrix can be found by replacing $\beta^{-1}$ with $\beta$ from equation~(\ref{MR}).

\subsection{Composing Coefficient Functions with Alternating Bases}

Finding the product from the change of basis equation (\ref{CoB})
\begin{equation*}
M_{vt} = M_{vr} M_{r t}
\end{equation*} can be optimized if $M_{vr}$ or $M_{rt}$ is an alternating change of basis matrix. This was defined in Wolfram~\cite{wolfram2107} for the case where $r, t$ and $v$ are descending bases. The product is formed from the dot products of the rows of $M_{vr}$ with the columns of $M_{rt}$. When one or both is an alternating matrix, at least one of the vectors in each dot product has zero elements in known positions. These elements can be excluded from the calculation of the dot product.

This leads to an optimization of the equations for the composition of the coefficient functions for the matrices. 

The example below is for the three cases of composition of two change of basis matrices that are for ascending bases that do not have definite parity where at least one matrix is an alternating matrix. This could be done for the nine other cases where at least one of the bases is a descending basis.

An optimized version of equation~(\ref{AND}) for ascending bases without definite parity where $M_{vr}$ is an alternating matrix is
\begin{equation} 
\alpha_3(n,m,k) = \sum\limits_{\mathclap{\substack{v = 0 \\ n-m-k-v \mbox{\small \;even}}}}^{n-m-k} \alpha_1(n, m+v,k) \; \alpha_2(n,m,n-m-v) \label{prodAlt1}
\end{equation} where $0 \leq k \leq n-m$. The number of elements in row $n-m -k$ in the lower triangle of $M_{vr}$ is $n-m-k+1$ and $v$ is the column index. Elements in a row with index $n-m-k$ in the lower triangle where $n-m-k$ and $v$ do not have the same parity are $0$ and can be excluded from the sum. 

Equation~(\ref{prodAlt1}) can be expressed in terms of the coefficient function $\beta$ for the alternating elements in the lower triangle of $M_{vr}$ by using equations~(\ref{AAeq}) and (\ref{AAneq}). If $n$ and $m$ have the same parity, then we have

\begin{equation} 
\alpha_3(n,m,k) =
\left\{\:\:\:\:\:
\begin{array}{ll} 
\sum\limits_{\mathclap{\substack{v = 0 \\ n-m-k-v \mbox{\small \;even}}}}^{n-m-k} \beta_1(n, m+v, \frac{k}2) \; \alpha_2(n,m,n-m-v) & \mbox{$k$ even}\\\\
\sum\limits_{\mathclap{\substack{v = 0 \\ n-m-k-v \mbox{\small \;even}}}}^{n-m-k} \beta_1(n-1, m+v, \frac{k-1}2) \; \alpha_2(n,m,n-m-v) & \mbox{$k$ odd}
\end{array} 
\right.
\end{equation} where $0 \leq k \leq n-m$. 

If $n$ and $m$ have different parity, then
\begin{equation} 
\alpha_3(n,m,k) =
\left\{\:\:\:\:\:
\begin{array}{ll} 
\sum\limits_{\mathclap{\substack{v = 0 \\ n-m-k-v \mbox{\small \;even}}}}^{n-m-k} \beta_1(n-1, m+v, \frac{k}2) \; \alpha_2(n,m,n-m-v) & \mbox{$k$ even}\\\\
\sum\limits_{\mathclap{\substack{v = 0 \\ n-m-k-v \mbox{\small \;even}}}}^{n-m-k} \beta_1(n, m+v, \frac{k-1}2) \; \alpha_2(n,m,n-m-v) & \mbox{$k$ odd}
\end{array}
\right.
\end{equation}

If $M_{rt}$ is also an alternating matrix, then the sum can be optimized further.

If $n$ and $m$ have the same parity, then we have

\begin{equation} 
\alpha_4(n,m,k) =
\left\{\:\:\:\:\:
\begin{array}{ll} 
\sum\limits_{\mathclap{\substack{v = 0 \\ n-m-k-v \mbox{\small \;even}}}}^{n-m-k} \beta_1(n, m+v, \frac{k}2) \; \beta_2(n,m,\frac{n-m-v}2) & \mbox{$k$ even}\\\\
0 & \mbox{$k$ odd.}
\end{array}
\right.
\end{equation} 

If $n$ and $m$ have different parity, then
\begin{equation} 
\alpha_4(n,m,k) =
\left\{\:\:\:\:\:
\begin{array}{ll} 
0 & \mbox{$k$ even}\\\\
\sum\limits_{\mathclap{\substack{v = 0 \\ n-m-k-v \mbox{\small \;even}}}}^{n-m-k} \beta_1(n, m+v, \frac{k-1}2) \; \beta_2(n,m,\frac{n-m-v-1}2) & \mbox{$k$ odd.}
\end{array}
\right.
\end{equation}

If $M_{vr}$ is not an alternating matrix and $M_{rt}$ is an alternating matrix, then
\begin{equation} 
\alpha_5(n,m,k) = 
\left\{\:\:\:\:\:
\begin{array}{ll} 
\sum\limits_{\mathclap{\substack{v = 0 \\ v \mbox{\small \;even}}}}^{n-m-k} \alpha_1(n, m+v,k) \; \beta_2(n,m,\frac{n-m-v}2) & \mbox{$n-m$ even}\\\\
\sum\limits_{\mathclap{\substack{v = 0 \\ v \mbox{\small \;even}}}}^{n-m-k} \alpha_1(n, m+v,k) \; \beta_2(n,m,\frac{n-m-v-1}2) & \mbox{$n-m$ odd.}
\end{array}
\right.
\end{equation}

\section{Superposition}

Superposition involves adding two polynomials with different parities to produce one that does not have definite parity.

\begin{definition} \label{superposed}
Let $g(x)_{n,m}$ and $h(x)_{n,m}$ be polynomials over $\mathbb{R}[x]$ which have definite parity and where $n \geq m \geq 0$. 

If a polynomial of the form $g(x)_{u,l}$ has degree $u$ and a polynomial of the form $h(x)_{u,l}$ has degree $u$, then the descending
superposed polynomial is defined by
\begin{equation} \label{gdef}
f(x)_{n,m} = 
\left\{ 
\begin{array}{ll} 
g(x)_{m,m} & \mbox{if $m=n$}\\
g(x)_{n,m} + h(x)_{n-1,m+1}& \mbox{if $n> m$ and $n \equiv m \pmod 2$}\\
g(x)_{n-1, m} + h(x)_{n,m+1}& \mbox{if $n> m$ and $n \not\equiv m \pmod 2$}
\end{array} \right.
\end{equation} 

If a polynomial of the form $g(x)_{u,l}$ has minimum degree $l$ and a polynomial of the form $h(x)_{u,l}$ has minimum degree $l$, then the ascending superposed polynomial is defined by
\begin{equation} \label{gadef}
f(x)_{n,m} = 
\left\{ 
\begin{array}{ll} 
g(x)_{n,n} & \mbox{if $n=m$}\\
g(x)_{n,m} + h(x)_{n-1,m+1}& \mbox{if $n> m$ and $n \equiv m \pmod 2$}\\
g(x)_{n, m+1} + h(x)_{n-1,m}& \mbox{if $n> m$ and $n \not\equiv m \pmod 2$}
\end{array} \right.
\end{equation} 
\end{definition}

Two examples of descending superposed polynomials are the Chebyshev polynomials of the third and fourth kinds. They can be defined~\cite[equations (1.17)--(1.18)]{handscomb} by
\begin{align*}
V_n(x) =& U_n(x) - U_{n-1}(x)\\
W_n(x) =& U_n(x) + U_{n-1}(x)
\end{align*} where $V_0(x) = 1$ and $W_0(x) = 1$. 

From Definition~\ref{superposed}, we have $m=0$ because the least degree of $V_n(x)$ nad $W_n(x)$ is $0$ where $n \geq 0$. For $V_n(x)$, $g_{n,0}(x) = U_n(x)$ where $n$ is even, and $g_{n,1}(x) = U_n(x)$ where $n$ is odd. The polynomial $h$ is $h_{n-1, 1}(x) = -U_{n-1}(x)$ where $n-1$ is odd, and $h_{n-1, 0}(x) = - U_{n-1}(x)$ where $n-1$ is even. The definition for $W_n(x)$ is similar except that $h$ has the other sign.

Another example, is the change of basis matrix from $\{x^2, x^3, x^4, x^5\}$ to the ascending basis
$\{ R_5^3(x) + R_4^2(x), R_5^3(x) + R_4^4(x), R_5^5(x) + R_4^4(x), R_5^5(x)\}$ that spans the same vector space:

\begin{center}
\begin{doublespace}
\noindent\(\left[
\begin{array}{cccc}
-\frac13 & \phantom{-}0 & \phantom{-}0& 0 \\
\phantom{-}\frac13& -\frac14 & \phantom{-}0 & 0 \\
\phantom{-}1 & \phantom{-}\frac14 & \phantom{-}1 & 0 \\
-1 & \phantom{-}1 & -1 & 1 \\
\end{array}
\right]\)
\end{doublespace}
\end{center}

Its product with a change of basis matrix for truncated Laguerre polynomials 
from $\{{L_3}_(x)_{2,2}, {L_4}(x)_{3,2}, {L_5}(x)_{4,2}, {L_6(x)}_{5,2}\}$ to $\{x^2, x^3, x^4, x^5\}$ gives

\begin{doublespace}
\begin{equation*}
\begin{bmatrix}
-\frac13 & \phantom{-}0 & \phantom{-}0& 0 \\
\phantom{-}\frac13& -\frac14 & \phantom{-}0 & 0 \\
\phantom{-}1 & \phantom{-}\frac14 & \phantom{-}1 & 0 \\
-1 & \phantom{-}1 & -1 & 1 \\
\end{bmatrix}
\begin{bmatrix}
\phantom{-}\frac32 & \phantom{-}3 & \phantom{-}5 & \phantom{-}\frac{15}2 \\
\phantom{-}0 & -\frac23 & -\frac{5}3 & -\frac{10}3 \\
\phantom{-}0 & \phantom{-}0 & \phantom{-}\frac{5}{24} & \phantom{-}\frac{5}{8} \\
\phantom{-}0 & \phantom{-}0 & \phantom{-}0 & -\frac{1}{20} \\
\end{bmatrix}
=
\begin{bmatrix}
-\frac12 & -1 & -\frac53& -\frac52 \\
\phantom{-}\frac12& \phantom{-}\frac76 & \phantom{-}\frac{25}{12} & \phantom{-}\frac{10}{3} \\
\phantom{-}\frac32 & \phantom{-}\frac{17}{6} & \phantom{-}\frac{115}{24} & \phantom{-} \frac{175}{24} \\
-\frac32 & -\frac{11}3 & -\frac{55}{8} & -\frac{1381}{120} \\
\end{bmatrix}
\end{equation*}
\end{doublespace}

From the second column of the product,
\begin{align*}
{L_4}(x)_{3,2} =& -\frac{11}{3} R_5^5(x) + \frac{17}{6} ( R_5^5(x) + R_4^4(x)) + \frac{7}{6} (R_5^3(x) + R_4^4(x))\\
& - (R_5^3(x) + R_4^2(x))\\
=& \frac1{24}(- 16 x^3 + 72 x^2).
\end{align*}

In general, from Definition \ref{superposed}, if the range basis of a mapping comprises superposed polynomials, the even and odd components can be separated in terms of the polynomials $g$ and $h$. In the previous example, the odd component of ${L_4}(x)_{3,2}$ is
\[
-\frac23 x^3 = (-\frac{11}{3} + \frac{17}{6}) R_5^5(x) + (\frac{7}{6} -1) R_5^3(x).
\]

\subsubsection*{Superposition and Alternation}

\begin{theorem} \label{conv-supalt}
Superposition and alternation are converse operations. 
\end{theorem}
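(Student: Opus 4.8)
The engine of the proof is the elementary fact that every $f \in \mathbb{R}[x]$ has a unique decomposition $f = f_e + f_o$ into its even part $f_e(x) = \tfrac12\bigl(f(x)+f(-x)\bigr)$ and its odd part $f_o(x) = \tfrac12\bigl(f(x)-f(-x)\bigr)$, and that $f$ has definite even (respectively odd) parity precisely when $f = f_e$ (respectively $f = f_o$). The plan is to first recast the two operations in these terms. By Definition~\ref{superposed}, superposition forms a non-definite-parity polynomial $f_{n,m}$ as a sum $g + h$ of one even-parity and one odd-parity polynomial drawn from the two interleaved parity-classes of an alternating basis; alternation is the reverse bookkeeping, extracting from each superposed polynomial its two parity components and filing the distinct components, in order of degree, into an alternating basis.

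The two compositions are then immediate from uniqueness. Applying superposition to an alternating basis and then alternation: each superposed element $f_{n,m} = g + h$ has $g$ and $h$ of opposite definite parity, so $g$ and $h$ are recovered as the even and odd parts of $f_{n,m}$ in the appropriate order; splitting therefore returns exactly the constituents, and collecting them recovers the original alternating basis. Applying alternation to a superposed basis and then superposition reverses the same step: splitting each $f_{n,m}$ yields its parity parts, and re-adding the prescribed consecutive pairs restores $f_{n,m} = f_e + f_o$. One subtlety to address explicitly is that consecutive superposed basis elements share a constituent (for instance $R_5^3(x)$ occurs in both $R_5^3(x) + R_4^2(x)$ and $R_5^3(x) + R_4^4(x)$ in the ascending example), so alternation must collect \emph{distinct} parity components; uniqueness of the decomposition guarantees that a shared constituent is identified consistently wherever it appears.

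The bulk of the work, and the only place I expect real friction, is the index bookkeeping that shows the pairing dictated by Definition~\ref{superposed} coincides with the interleaving of even- and odd-parity elements in the corresponding alternating basis. I would carry this out case by case over the four configurations — descending versus ascending, and $n \equiv m \pmod 2$ versus $n \not\equiv m \pmod 2$ — checking in each that the degree and minimum-degree labels (so $g_{n,m}$ paired with $h_{n-1,m+1}$, and so on) match the labels appearing in the definition of an alternating basis, and treating the boundary terms (the solitary $g_{m,m}$ or $g_{n,n}$ arising when $n=m$, and the lone extremal element of the basis) separately. Since the ascending and descending families are mirror images and the two parity cases differ only by a shift of $m$ to $m+1$, it suffices to verify one representative case in full and observe that the remaining cases follow by the same argument together with the direct check of the two boundary terms.
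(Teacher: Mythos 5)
Your proposal is correct, but it takes a genuinely different route from the paper. The paper proves the theorem inside its change-of-basis framework: starting from the alternating change of basis matrix $M_{vt}$ (Figures~\ref{MDAltsame}--\ref{MAAltdiffer}), it forms a new matrix by the column operations $d_0 = c_0$, $d_j = c_j + c_{j-1}$ in the descending case (and $d_{n-m} = c_{n-m}$, $d_j = c_j + c_{j+1}$ in the ascending case), and then verifies, via the coordinate-vector identities for $g(x)_{m+j,m}$ and $h(x)_{m+j,m+1}$ together with a case split on the parity of $j$, that the new columns are exactly the transposed coordinate vectors of the superposed polynomials $f(x)_{m+j,m}$ of Definition~\ref{superposed}; the invertibility of these column operations is what makes the two operations converses. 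You instead argue directly at the level of polynomials, using the unique decomposition $f = f_e + f_o$ into even and odd parts: since Definition~\ref{superposed} always adds one even-parity and one odd-parity constituent, parity splitting recovers the constituents exactly, and uniqueness resolves the fact that consecutive superposed elements share a constituent --- a subtlety you rightly flag, and which the paper handles only implicitly through the recursive relation $c_j = d_j - c_{j-1}$. Your argument is more elementary and makes the ``converse'' claim transparent; the paper's argument buys more, namely an explicit matrix-level recipe (sums and differences of adjacent columns) for passing between alternating and superposed change of basis matrices, which the paper then exploits computationally, for instance in the Bernstein-to-Zernike example. The index bookkeeping you defer is routine and closes in the four configurations exactly as you describe, so I see no gap.
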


\begin{proof}
Let $v$ and $t$ be descending alternating bases that span the same vector space as $\{x^m, x^{m+1}, \ldots , x^n\}$.
The matrix $M_{vt}$ is a change of basis matrix for a descending alternating basis $t$:
\[
\{g_{m,m}(x), g_{m+2,m}(x), \ldots g_{k, m}(x)\} \cup \{h_{m+1, m+1}(x), h_{m+3, m+1}(x) , \ldots, h_{l, m+1}(x)\}
\] where
\begin{itemize}
\item $k = n$ and $l=n-1$ if $n$ and $m$ have equal parity
\item $k=n-1$ and $l =n $ if $n$ and $m$ do not have equal parity.
\end{itemize}

From Theorem~\ref{tri-inv}, the basis $v$ has the following form.
\[
\{p_{m,m}(x), p_{m+2,m}(x), \ldots p_{k, m}(x)\} \cup \{p_{m+1, m+1}(x), p_{m+3, m+1}(x) , \ldots, p_{l, m+1}(x)\}
\]

Consider the change of basis matrix for a descending alternating basis with unequal parity, e.g., from Figure~\ref{MDAltdiffer}, and let $w = \frac{n-m-1}2$:
\begin{figure}[ht]
\begin{gather*}
\begin{bmatrix}
\beta(m,m, 0) & \cdots & \beta(n-1, m, w) & 0 \\
0 & \cdots & 0 & \beta(n,m+1, w) \\
\cdots & \cdots & \cdots & \cdots \\
0 & \cdots & \beta(n-1, m, 1)& 0 \\
0 &\cdots & 0& \beta(n ,m+1, 1) \\
0 & \cdots & \beta(n-1,m,0) & 0 \\
0 & \cdots & 0& \beta(n, m+1, 0) \\
\end{bmatrix}
\end{gather*} 
\end{figure}

Column $j$ where $0 \leq j \leq n-m$ of this matrix is the transposed coordinate vector of a basis polynomial of $t$ with respect to $v$. 
If $j$ has the same parity as $m$, then
\begin{equation} \label{gsum}
g(x)_{m+j,m} = \sum\limits_{i=0}^{\frac{j}2} \beta(m+j,m,i) p(x)_{m+j - 2 i , m}.
\end{equation}

If $j$ does not have the same parity as $m$, then
\begin{equation} \label{hsum}
h(x)_{m+j,m+1} = \sum\limits_{i=0}^{\frac{j-1}2} \beta(m+j,m+1,i) p(x)_{m+j - 2 i , m+1}.
\end{equation}

We can form an upper triangular matrix from it using the following equations on columns.
\begin{align*}
d_0 =& c_0\\
d_j =& c_j + c_{j-1}
\end{align*} 

The columns $d_j$ where $0 \leq j \leq m-n$ are the columns of the new matrix and $c_j$ are the columns of the original matrix where $0 \leq j \leq m-n$.
This gives a change of basis matrix for a superposed polynomial basis.
\begin{figure}[H]
\resizebox{\textwidth}{!}{%
$\displaystyle
\left[
\begin{array}{rrrrr}
\beta(m,m, 0) & \cdots & \beta(n-3,m, w-1) &\beta(n-1, m, w) & \beta(n-1, m, w)\\
0 & \ldots & \beta(n-2,m+1, w-1)& \beta(n-2,m+1, w-1) & \beta(n,m+1, w)\\
\ldots & \ldots & \ldots & \ldots & \ldots \\
0 & \cdots & \beta(n-3, m, 0) & \beta(n-1, m, 1)& \beta(n-1, m, 1)\\
0 &\cdots & \beta(n-2, m+1, 0) & \beta(n-2, m+1, 0)& \beta(n ,m+1, 1)\\
0 & \cdots & 0 & \beta(n-1, m, 0)& \beta(n-1, m, 0)\\
0 & \cdots & 0 & 0 & \beta(n, m+1, 0)
\end{array}
\right]
$}
\end{figure}

We show that the columns $d_j$ of this matrix are the transposed coordinate vectors of 
\[
\{f(x)_{m,m}, f(x)_{m+1,m+1}, \ldots, f(x)_{n-1, m+1}, f(x)_{n,m}\}.
\] in the basis $v$.

When $j=0$, we have $g(x)_{m,m} = \beta(m,m,0) p(x)_{m,m}$, and from equation~(\ref{gsum}), $g(x)_{m,m} = f(x)_{m,m}$.

If $j > 0$ and $j$ is even, then from equations~(\ref{gsum}) and (\ref{hsum}), we have that column $d_j$ is the transposed coordinate vector of
$g(x)_{m+j,m} + h(x)_{m+j-1, m+1}$ in the basis $v$. From equation~(\ref{gdef}), this sum is $f(x)_{m+j, m}$.

If $j > 0$ and $j$ is odd, then from equations~(\ref{gsum}) and (\ref{hsum}), we have that column $d_j$ is the transposed coordinate vector of $h(x)_{m+j, m+1} + g(x)_{m+j-1, m}$ in the basis $v$. From equation~(\ref{gdef}), this sum is $f(x)_{m+j,m}$, as required.

\medskip

Similarly, for an ascending alternating change of basis matrix which has columns $c_j$, we can form the lower triangular matrix with columns $d_j$ by using the equations
\begin{align*}
d_{n-m} =& c_{n-m}\\
d_j =& c_j + c_{j+1}
\end{align*} where $0 \leq j < n-m$.

Let $v$ and $t$ be ascending alternating bases that span the same vector space as $\{x^m, x^{m+1}, \ldots , x^n\}$.
The matrix $M_{vt}$ is a change of basis matrix for an ascending alternating basis $t$ where 
\[
t = \{g_{n,n}(x), g_{n,n-2}(x), \ldots g_{n, k}(x)\} \cup \{h_{n-1, n-1}(x), h_{n-1, n-3}(x) , \ldots, h_{n-1, l}(x)\}
\] and 
\begin{itemize}
\item $k = m$ and $l=m+1$ if $n$ and $m$ have equal parity
\item $k=m+1$ and $l =m$ if $n$ and $m$ do not have equal parity.
\end{itemize}

From Theorem~\ref{tri-inv}, the basis $v$ has the form
\[
\{p_{n,n}(x), p_{n,n-2}(x), \ldots p_{n, k}(x)\} \cup \{p_{n-1, n-1}(x), p_{n-1, n-3}(x) , \ldots, p_{n-1, l}(x)\}.
\] 

We show that the columns $d_j$ of this matrix are the transposed coordinate vectors of 
\[
\{f(x)_{n,n}, f(x)_{n,n-1}, \ldots, f(x)_{n, m+1}, f(x)_{n,m}\}.
\] in the basis $v$.

Column $j$ where $0 \leq j \leq n-m$ of this matrix is the transposed coordinate vector of a basis polynomial of $t$ with respect to $v$. 
If $j$ has the same parity as $n-m$, then
\begin{equation} \label{gasum}
g(x)_{n,m+j} = \sum\limits_{i=0}^{\frac{n-m-j}2} \beta(n, m+j, i) p(x)_{n, n- 2 i}.
\end{equation}

If $j$ does not have the same parity as $n-m$, then
\begin{equation} \label{hasum}
h(x)_{n-1, m+j} = \sum\limits_{i=0}^{\frac{n-m-j-1}2} \beta(n-1, m+j, i) p(x)_{n-1, n-1 - 2 i}.
\end{equation}

When $j=n-m$, we have $g(x)_{n,n} = \beta(n,n,0) p(x)_{n,n}$, and from equation~(\ref{gasum}), $g(x)_{n,n} = f(x)_{n,n}$.

If $j < n-m$ and $j$ has the same parity as $n-m$, then from equations~(\ref{gasum}) and (\ref{hasum}), we have that column $d_j$ is the transposed coordinate vector of
$g(x)_{n,m+j} + h(x)_{n-1, m+j+1}$ in the basis $v$. From equation~(\ref{gadef}), this sum is $f(x)_{n, m+j}$.

If $j < n-m$ and $j$ does not have the same parity as $n-m$, then from equations~(\ref{gasum}) and (\ref{hasum}), we have that column $d_j$ is the transposed coordinate vector of $h(x)_{n-1, m+j} + g(x)_{n, m+j+1}$ in the basis $v$. From equation~(\ref{gadef}), this sum is $f(x)_{n, m+j}$, as required.
\end{proof}

\subsection{From Bernstein to Zernike}

We can use superposition to form an ascending polynomial basis that does not have definite parity from the Zernike Radial Polynomials. This basis is
\[
F = \{f_{7,3}(x), f_{7,4}(x), f_{7,5}(x), f_{7,6}(x), f_{7,7}(x) \}
\] where
\begin{equation*} \label{fdef}
f_{7,m}(x) = 
\left\{
\begin{array}{ll} 
R_7^7(x) & \mbox{if $m=7$}\\
R_7^m(x) + R_6^{m+1}(x)& \mbox{if $m < 7$ and $7 \equiv m \pmod 2$}\\
R_7^{m+1}(x) + R_6^m(x)& \mbox{if $m < 7$ and $7 \not\equiv m \pmod 2$}\\
0 & \mbox{if $m > 7$}
\end{array} \right.
\end{equation*} 

The product of the change of basis matrix from $\{x^3, x^4, \ldots, x^6, x^7\}$ to $F$ with the change of basis matrix from $\{b_3^7(x), b_4^7, b_5^7(x), b_6^7, b_7^7\}$ to $\{x^3, x^4, \ldots, x^6, x^7\}$ is

\begin{center}
\begin{doublespace}
\noindent\(\left[
\begin{array}{ccccc}
\frac{1}{10} & 0 & 0 & 0 & 0 \\
-\frac{1}{10} & -\frac{1}{5} & 0 & 0 & 0 \\
-\frac{2}{5} & \frac{1}{5} & -\frac{1}{6} & 0 & 0 \\
\frac{2}{5} & 1 & \frac{1}{6} & 1 & 0 \\
1 & -1 & 1 & -1 & 1 \\
\end{array} \right]
\left[
\begin{array}{ccccc}
35 & 0 & 0 & 0 & 0 \\
-140 & 35 & 0 & 0 & 0 \\
210 & -105 & 21 & 0 & 0 \\
-140 & 105 & -42 & 7 & 0 \\
35 & -35 & 21 & -7 & 1 \\
\end{array}
\right]
=
\left[
\begin{array}{ccccc}
\frac{7}{2} & 0 & 0 & 0 & 0 \\
\frac{49}{2} & -7 & 0 & 0 & 0 \\
-77 & \frac{49}{2} & -\frac{7}{2} & 0 & 0 \\
-231 & \frac{245}{2} & -\frac{77}{2} & 7 & 0 \\
560 & -280 & 84 & -14 & 1 \\
\end{array}
\right]
\)
\end{doublespace}
\end{center}

From the second column of this matrix, we have 
\begin{align*}
b_4^7(x) =& -280 f_{7, 7}(x) + \frac{245}2 f_{7,6}(x) + \frac{49}2 f_{7, 5}(x) -7 f_{7,4}(x)\\
=& -280 R_7^7(x) + \frac{245}2 (R_7^7(x) + R_6^6(x)) + \frac{49}2(R_7^5(x) + R_6^6(x)) \\
&-7 (R_7^5(x) + R_6^4(x))\\
= & 35 x^4-105 x^5+105 x^6-35 x^7\\
=& {7 \choose 4} (1-x)^3 x^4.
\end{align*}

The components with even parity are
\begin{align*}
147 R_6^6(x) -7 R_6^4(x) =& 35 x^4 + 105 x^6 
\end{align*}

The components with odd parity are
\begin{align*}
(-280 + \frac{245}2) R_7^7(x) + (\frac{49}2 -7) R_7^5(x) =& -105 x^5 -35 x^7.
\end{align*}

\section{Summary with Categories}

We use category theory, e.g., \cite{walters}, to analyse change of basis groupoids. 

\begin{lemma}
A change of basis groupoid is a small category { CB}.
The set 
$\mbox{\rm obj\;CB}$ is a finite set of polynomial bases. The objects of CB are these bases. Each basis in this set spans the same vector space.

Its morphisms are change of basis matrices such that for every $v, t \in \mbox{\rm obj\;{CB}}$ there is a unique $M_{vt} \in \mbox{\rm hom\;{CB}}$ such that $M_{vt}: v \rightarrow t$.
\end{lemma}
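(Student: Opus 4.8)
The plan is to verify directly that the data $(\mbox{\rm obj}\,\mathrm{CB},\,\mbox{\rm hom}\,\mathrm{CB})$ satisfy the axioms of a category and that both collections are sets, so that $\mathrm{CB}$ is \emph{small}. The groupoid structure is already available: by Theorem~1 of Wolfram~\cite{wolfram2107}, recalled in the proof of Theorem~\ref{tri-inv}, the change of basis matrices between a finite family of bases of a common vector space form a connected groupoid under matrix multiplication and matrix inversion. I would take this as the starting point and reinterpret its content in categorical language, since a groupoid is precisely a category in which every morphism is invertible. Smallness is then immediate: $\mbox{\rm obj}\,\mathrm{CB}$ is a finite set of bases by hypothesis, and each $M_{vt}$ is an element of the set-sized algebra of square matrices over $\mathbb{R}$ of the common dimension, so $\mbox{\rm hom}\,\mathrm{CB}$ is a set.

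First I would fix the hom-sets, composition, and identities. The hom-set from $v$ to $t$ is the singleton $\{M_{vt}\}$, because, as recalled in Section~\ref{CoBS}, the columns of $M_{vt}$ are the coordinate vectors of one basis expressed in terms of the other, and coordinate representations relative to a basis are unique; this gives the required \emph{unique} $M_{vt}\colon v\to t$ for every ordered pair. Composition of $M_{vr}\colon v\to r$ with $M_{rt}\colon r\to t$ is the matrix product $M_{vr}M_{rt}$, which by the change of basis equation~(\ref{CoB}) equals $M_{vt}\colon v\to t$, so composition lands in the correct hom-set and is well-defined. The identity at an object $v$ is $M_{vv}$, the change of basis matrix from $v$ to itself, which is the identity matrix; the identity laws $M_{vv}M_{vt}=M_{vt}=M_{vt}M_{tt}$ then follow at once from multiplication by the identity matrix, and associativity $(M_{vr}M_{rt})M_{tw}=M_{vr}(M_{rt}M_{tw})$ is just associativity of matrix multiplication, both sides equalling $M_{vw}$ by repeated use of~(\ref{CoB}).

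It remains to record the property that upgrades $\mathrm{CB}$ from a category to a groupoid: every $M_{vt}$ is invertible with $M_{vt}^{-1}=M_{tv}$, as noted after the first theorem of Section~\ref{CoBS} from Lemmas~1 and~2 of Wolfram~\cite{wolfram2107}, and indeed $M_{vt}M_{tv}=M_{vv}$ is the identity. The main point requiring care --- the closest thing to an obstacle --- is bookkeeping of the direction convention: the arrow $M_{vt}\colon v\to t$ must be matched consistently with the diagrammatic (left-to-right) reading of the subscripts in the matrix products of equation~(\ref{CoB}), so that domains and codomains compose correctly, and one must confirm that each $M_{vt}$ is unique so that $\mathrm{CB}$ is in fact the indiscrete (chaotic) category on its objects. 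Once these conventions are pinned down, no nontrivial computation remains, and the category and groupoid axioms hold by the cited results together with the elementary properties of matrix multiplication.
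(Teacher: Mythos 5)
Your proposal is correct and follows essentially the same route as the paper's proof: composition is defined as matrix product via equation~(\ref{CoB}), identities are the identity change of basis matrices $M_{vv}$, and the identity and associative laws follow from elementary properties of matrix multiplication. The only difference is that you spell out the smallness, uniqueness of each $M_{vt}$, and invertibility ($M_{vt}^{-1}=M_{tv}$) explicitly, which the paper dismisses as straightforward or leaves implicit in the lemma statement.
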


\begin{proof}
It is straightforward to check that $\mbox{\rm obj\;{CB}}$ is a small category. Given $M_{vr}, M_{rt} \in \mbox{\rm hom\;{CB}}$ there is a composite morphism $M_{vt} \in \mbox{\rm hom\;{CB}}$ defined by $M_{vr} \circ M_{rt} = M_{vr} M_{rt}$, i.e., matrix product.

For every object $v \in \mbox{\rm obj\;{CB}}$, there is an identity morphism $M_{vv}:v \rightarrow v$. This is the identity change of basis matrix.

The identity laws are satisfied. For every $M_{vt} \in \mbox{\rm hom\;{CB}}$, $M_{vv} \circ M_{vt} = M_{vt}$ and $M_{vt} \circ M_{tt} = M_{vt}$.
The associative law is also satisfied. For every $M_{vr}, M_{rt}, M_{tu} \in \mbox{\rm hom\;{CB}}$, $M_{vr} \circ (M_{rt} \circ M_{tu}) = (M_{vr} \circ M_{rt}) \circ M_{tu})$ because matrix product is associative.
\end{proof}

\begin{lemma}
Give a category { CB}, it has full subcategories
\begin{itemize}
\item {\rm UTr} where $\mbox{\rm obj\;{UTr}}$ are descending bases and $\mbox{\rm hom\;{UTr}}$ are upper-triangu\-lar matri\-ces
\item {\rm LTr} where $\mbox{\rm obj\;{LTr}}$ are ascending bases and $\mbox{\rm hom\;{LTr}}$ are lower-triangular matrices
\item {\rm Alt} where $\mbox{\rm obj\;{Alt}}$ are alternating bases and $\mbox{\rm hom\;{Alt}}$ are alternating matrices
\item {\rm UAlt} where $\mbox{\rm obj\;{UAlt}}$ are descending alternating bases and $\mbox{\rm hom\;{UAlt}}$ are upper-triangular alternating matrices
\item {\rm LAlt} where $\mbox{\rm obj\;{LAlt}}$ are ascending alternating bases and $\mbox{\rm hom\;{LAlt}}$ are lower-triangular alternating matrices
\end{itemize}

The subcategory relations are given in the following diagram.
\begin{center}
\begin{tikzcd}
\rm UTr \arrow[d] & \rm CB \arrow[d] \arrow[l] \arrow[r] & \rm LTr \arrow[d] \\
\rm UAlt & \rm Alt \arrow[r] \arrow[l] & \rm LAlt 
\end{tikzcd}
\end{center}
\end{lemma}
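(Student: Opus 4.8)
The plan is to verify, for each of the five named collections, the two defining requirements of a full subcategory of CB: that its objects form a subset of $\mbox{\rm obj\,CB}$, and that its hom-sets are exactly the CB-morphisms between those objects. Since a full subcategory is determined entirely by its choice of objects (one then inherits all ambient morphisms between them), the real content is to confirm that these ambient morphisms have the triangular or alternating shape asserted in the statement, and that they contain the required identities and are closed under composition, so that each collection is genuinely a category.

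First I would treat $\mbox{\rm UTr}$ and $\mbox{\rm LTr}$, whose objects are respectively the descending and the ascending bases among $\mbox{\rm obj\,CB}$, each spanning the common vector space. By the matrix forms recorded in Figures~\ref{MDesNP}--\ref{MDAscP} and by Theorem~\ref{tri-inv}, every change of basis matrix $M_{vt}$ with $v,t$ descending is upper triangular, and every such matrix with $v,t$ ascending is lower triangular; moreover Theorem~\ref{tri-inv} shows these shapes are invariant under matrix product and inversion. Hence $\mbox{\rm hom\,UTr}$ is precisely the set of upper-triangular change of basis matrices and $\mbox{\rm hom\,LTr}$ the lower-triangular ones, each containing the relevant identity matrix and closed under the groupoid operations, so both are full subcategories of CB.

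Next I would handle $\mbox{\rm Alt}$, $\mbox{\rm UAlt}$ and $\mbox{\rm LAlt}$ using Corollary~\ref{corto2}, which gives that change of basis matrices between alternating bases are alternating matrices, that alternation is invariant under both matrix product and inverse, and that the matrices between descending (resp.\ ascending) alternating bases are upper-triangular (resp.\ lower-triangular) alternating. Taking objects to be the alternating, the descending-alternating, and the ascending-alternating bases in $\mbox{\rm obj\,CB}$ respectively, and inheriting all CB-morphisms, yields $\mbox{\rm Alt}$, $\mbox{\rm UAlt}$ and $\mbox{\rm LAlt}$ as full subcategories with the stated morphism shapes. Since a descending alternating basis is in particular both a descending basis and an alternating basis, $\mbox{\rm obj\,UAlt} \subseteq \mbox{\rm obj\,UTr} \cap \mbox{\rm obj\,Alt}$, and because $\mbox{\rm UAlt}$ is full in CB it is a fortiori full in $\mbox{\rm UTr}$ and in $\mbox{\rm Alt}$; the symmetric statement places $\mbox{\rm LAlt}$ fully inside $\mbox{\rm LTr}$ and $\mbox{\rm Alt}$. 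These inclusions are exactly the arrows of the displayed diagram, in which every arrow points from a category to a full subcategory of it.

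The step I expect to require the most care is the fullness bookkeeping rather than any hard computation: one must check that restricting $\mbox{\rm obj\,CB}$ to, say, the alternating bases does not accidentally shrink the hom-sets below \emph{all} alternating matrices, i.e.\ that every alternating change of basis matrix between two alternating bases really does arise as a CB-morphism. This is guaranteed because CB already contains, for each pair $v,t$ of its objects, the unique morphism $M_{vt}$, and Corollary~\ref{corto2} identifies that morphism as alternating precisely when $v$ and $t$ are alternating; the same observation settles the analogous point for the triangular cases via Theorem~\ref{tri-inv}.
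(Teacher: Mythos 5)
Your proposal is correct and follows essentially the same route as the paper's own proof: both reduce the triangular cases to Theorem~\ref{tri-inv}, the alternating cases to Corollary~\ref{corto2}, and obtain fullness from the fact that CB contains a unique morphism $M_{vt}$ between every pair of its objects, so restricting to a subset of objects automatically inherits all morphisms between them. Your additional spelling-out of the diagram inclusions (e.g.\ that a descending alternating basis is both a descending basis and an alternating basis, placing UAlt fully inside UTr and Alt) is a harmless elaboration of what the paper states in one line.
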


\begin{proof}
The objects of the categories are super-sets of the objects of their sub-categories.
Change of basis matrices between ascending bases and descending bases have lower- and upper-triangular change of basis matrices from Section~\ref{CoBS}.
Morphism inversion and composition preserve upper- and lower-triangularity from Theorem~\ref{tri-inv}. They preserve alternation from Corollary~\ref{corto2}. The sub-categor\-ies are full sub-categories because a morphism between any two objects in a category or sub-category of the category exists and it is unique. 
\end{proof}

\begin{definition}
The small categories $\mbox {\rm UTr}_n$, $\mbox {\rm LTr}_n$, $\mbox {\rm UAlt}_n$, $\mbox {\rm LAlt}_n$ are respectively 
{\rm UTr, LTr, UAlt, LAlt} whose morphisms are $n \times n$ change of basis matrices.
\end{definition}

Truncation of change of basis matrices is a functor from the categories with upper- or lower-triangular change of basis matrices to the categories whose morphisms are truncated forms of these matrices.

\begin{theorem}
Let $A_n$, be one of $\mbox {\rm UTr}_n$, $\mbox {\rm LTr}_n$, $\mbox {\rm UAlt}_n$, or $\mbox {\rm LAlt}_n$. There is a covariant functor $T_{k_1, k_2}$ such that 
\[
T_{k_1, k_2}(A_n) = A_{n - k_1 - k_2}
\] where $0 \leq k_1+ k_2 \leq n$ and
\begin{itemize}
\item for every basis of the form $\{f_1(x), \ldots, f_n(x)\} \in \mbox{\rm obj\;A}_n$ ordered by increasing degree
\[
T_{k_1, k_2}(\{f_1(x), \ldots, f_n(x)\}) = \{f_{k_1+1}(x)_{k_1, k_2}, \ldots, f_{n-k_2}(x)_{k_1, k_2}\}
\]
\item for every morphism $M_{vt} \in \mbox{\rm hom\;{A}}_n$, 
\[
T_{k_1, k_2}(M_{vt}) = tr_{k_1, k_2}(M_{vt}).
\]
\end{itemize}
\end{theorem}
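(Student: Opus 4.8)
The plan is to verify directly that $T_{k_1,k_2}$ satisfies the three functor axioms: that it sends objects of $A_n$ to objects of $A_{n-k_1-k_2}$ and morphisms to morphisms with the correct source and target, that it preserves identity morphisms, and that it preserves composition in the same order (so that the functor is covariant rather than contravariant). Since the object assignment is fixed by the displayed formula and the morphism assignment is $M_{vt}\mapsto \mathrm{tr}_{k_1,k_2}(M_{vt})$, most of the work reduces to invoking Theorem~\ref{local}, together with its interaction with alternation via Theorem~\ref{inv-alt} and Corollary~\ref{corto2}.

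First I would check well-definedness on objects. Given a basis $\{f_1(x),\ldots,f_n(x)\}\in\mbox{\rm obj\,}A_n$ ordered by increasing degree, deleting the first $k_1$ and last $k_2$ elements and truncating the survivors yields a family whose degrees (or minimum degrees, in the ascending case) still form a consecutive monotone sequence, so the image is again an ascending, descending, or alternating basis of the appropriate type and belongs to $\mbox{\rm obj\,}A_{n-k_1-k_2}$; in the alternating cases one additionally observes that removing a contiguous block of leading and trailing columns leaves the parity-alternation pattern intact.

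The crux is well-definedness on hom-sets: for a morphism $M_{vt}\colon v\to t$, the truncated matrix $\mathrm{tr}_{k_1,k_2}(M_{vt})$ must be exactly the unique change of basis matrix $M_{v't'}$ between the truncated objects $v'=T_{k_1,k_2}(v)$ and $t'=T_{k_1,k_2}(t)$. By property~1 of Theorem~\ref{local} the truncated matrix has the correct triangular shape (and, by Theorem~\ref{inv-alt} and Corollary~\ref{corto2}, the correct alternating shape), so it lies in $\mbox{\rm hom\,}A_{n-k_1-k_2}$; that it is the \emph{correct} morphism follows because each surviving column of $M_{vt}$, restricted to the surviving rows, is precisely the coordinate vector of the truncated target polynomial with respect to $v'$, exactly as exhibited in the worked Laguerre and Bernstein examples. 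The main obstacle is this identification: one must confirm that discarding the first $k_1$ and last $k_2$ rows loses no nonzero coordinate of the surviving basis polynomials. This holds because, as in the sum $v_{i,j}=\sum_{l=i}^{j} m_{i,l}n_{l,j}$ appearing in the proof of property~2 of Theorem~\ref{local}, the coordinate vector of a truncated basis polynomial relative to the truncated target has no support in the discarded rows.

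Finally I would verify the two categorical axioms. Preservation of identities is immediate, since $\mathrm{tr}_{k_1,k_2}$ applied to the $n\times n$ identity matrix is the $(n-k_1-k_2)\times(n-k_1-k_2)$ identity matrix, which is the identity morphism on $T_{k_1,k_2}(v)$. Preservation of composition is exactly property~2 of Theorem~\ref{local}: for composable morphisms $M_{vr}$ and $M_{rt}$ of $A_n$,
\[
T_{k_1,k_2}(M_{vr}\circ M_{rt}) = \mathrm{tr}_{k_1,k_2}(M_{vr}M_{rt}) = \big(\mathrm{tr}_{k_1,k_2}M_{vr}\big)\big(\mathrm{tr}_{k_1,k_2}M_{rt}\big) = T_{k_1,k_2}(M_{vr})\circ T_{k_1,k_2}(M_{rt}),
\]
so the order of composition is preserved and $T_{k_1,k_2}$ is covariant. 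Assembling the three verifications yields the functor.
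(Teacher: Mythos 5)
Your proof is correct and follows essentially the same route as the paper's: identify $\mathrm{tr}_{k_1,k_2}(M_{vt})$ as the change of basis matrix between the truncated bases, then get preservation of composition from property~2 of Theorem~\ref{local} and note that identities are preserved. The only difference is that you spell out the hom-set well-definedness (via triangularity and the vanishing of discarded coordinates) that the paper compresses into ``From Definition~\ref{parcoeff}, we can show that \ldots''.
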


\begin{proof}
Truncation of a polynomial $f(x)_{u, l}$ was defined in Definition \ref{parcoeff}, and truncation of a change of basis matrix was defined in Section~\ref{truncsec}. From Definition~\ref{parcoeff}, we can show that if $t$ has the form $\{f_1(x), \ldots, f_n(x)\}$ and $v$ has the form $\{g_1(x), \ldots, g_n(x)\}$, then $tr_{k_1, k_2}(M_{vt})$ is the morphism that maps 
\[\{f_{k_1+1}(x)_{k_1, k_2}, \ldots, f_{n-k_2}(x)_{k_1, k_2}\} \mbox{ to } \{g_{k_1+1}(x)_{k_1, k_2}, \ldots, g_{n-k_2}(x)_{k_1, k_2}\}.
\]

From Theorem~\ref{local}, we have $T_{k_1, k_2}(M_{vr} M_{rt}) = T_{k_1, k_2}(M_{vr}) \;T_{k_1, k_2}(M_{rt})$ so that $T_{k_1, k_2}$ is covariant. Also, for every identity morphism $M_{vv}$ in $A_n$, we have $T_{k_1, k_2}(M_{vv})$ is the identity morphism from $T_{k_1, k_2}(v)$ to itself in $A_{n - k_1 - k_2}$. Hence $T_{k_1, k_2}$ satisfies the definition of a covariant functor from $A_n$ to $A_{n- k_1 - k_2}$.
\end{proof}

Superposition is not a functor. It is used to define a basis from an alternating basis, or a change of basis matrix from an alternating change of basis matrix. This is a counter-example. Suppose that $S$ is superposition functor from LAlt and we have the following morphisms in hom\;LAlt:

\begin{centering}
\begin{doublespace}
\noindent\(\left[
\begin{array}{ccc}
1 & 0 & 0 \\
0 & 2 & 0 \\
4 & 0 & 3 \\
\end{array}
\right] \left[
\begin{array}{ccc}
2 & 0 & 0 \\
0 & 2 & 0 \\
5 & 0 & 2 \\
\end{array}
\right] = 
\left[
\begin{array}{ccc}
2 & 0 & 0 \\
0 & 4 & 0 \\
23 & 0 & 6 \\
\end{array}
\right].\)
\end{doublespace}
\end{centering}

However,

\begin{centering}
\begin{doublespace}
\noindent\(\left[
\begin{array}{ccc}
1 & 0 & 0 \\
2 & 2 & 0 \\
4 & 3 & 3 \\
\end{array}
\right]\left[
\begin{array}{ccc}
2 & 0 & 0 \\
2 & 2 & 0 \\
5 & 2 & 2 \\
\end{array}
\right] = 
\left[
\begin{array}{ccc}
2 & 0 & 0 \\
8 & 4 & 0 \\
29 & 12 & 6 \\
\end{array}
\right]\)
\end{doublespace}
\end{centering}
and
\begin{centering}
\begin{doublespace}
\(S\left[
\begin{array}{ccc}
2 & 0 & 0 \\
0 & 4 & 0 \\
23 & 0 & 6 \\
\end{array}
\right] \not=
\left[
\begin{array}{ccc}
2 & 0 & 0 \\
8 & 4 & 0 \\
29 & 12 & 6 \\
\end{array}
\right].\)
\end{doublespace}
\end{centering}

\section{Conclusion}

This paper builds on earlier results for change of basis~\cite{wolfram2107,wolfram2108} that mainly concerned bases of classical orthogonal polynomials. In the framework introduced here, these bases are descending bases and each polynomial implicitly has $0$ or $1$ as its minimum degree when expressed using the monomials. 

In Part I, a framework for classifying changes of basis between polynomial bases included ascending bases.  These bases can either have definite parity or have no parity. Unlike classical orthogonal polynomials, the minimum degree of polynomials in an ascending basis is specified explicitly and it can be greater than $1$. The change of basis matrices for mappings between descending bases and ascending bases were then defined. 

We defined four general kinds of change of basis matrices: two between ascending bases depending on parity, and two between descending bases. Mappings between descending bases have upper triangular change of basis matrices, those between ascending bases have lower triangular ones.

The main result is in Theorem~\ref{tri-inv} that given a set of ascending or descending bases that span the same vector space, the sets of all change of basis matrices between ascending bases or between descending bases form connected sub-groupoids of lower- or upper-triangular matrices, respectively.

We then showed that Bernstein polynomials and Zernike Radial polynomials can be used to form ascending and descending bases. Coefficient functions~\cite{wolfram2107} are functions that evaluate to connection coefficients. We give two new coefficient functions for the mappings from monomials to a descending basis of Bernstein polynomials, and to an ascending polynomial basis of Zernike Radial polynomials in equations~(\ref{adesc}) and (\ref{b4}). These mappings are verified in Theorems~\ref{invdesa} and~\ref{beta-RM}.

In Part II, we used all products of the four change of basis matrices defined in Part I to define eight general change of basis matrices. Each product satisfies the change of basis equation~(\ref{CoB}). For simplicity, the exchange basis $r$ is often chosen to be the monomials. A motivation for doing this is to reduce the number of permutations of domain and range bases from $n^2$ to $2n$ by analogy with spoke-hub distribution where the monomials form the ``hub''. 
Equations for the elements of the products of the matrices and the compositions of the coefficient functions were given. There is a summary in Table~\ref{CoBM}.

We give two examples of change of basis with Bernstein polynomials. The main one is from Farouki~\cite{farouki2000} and it concerns the change of basis from shifted Legendre polynomials to Bernstein polynomials. An open problem from this work was to find a closed form expression for a function that evaluates to the connection coefficients. 

Using the techniques introduced in Part II, we show that the change of basis is a product of a descending basis and an ascending basis without definite parity. This factoring enables us to find an expression for the composition of coefficient functions. By using Gosper's algorithm with this expression, we show that it has no general closed form expression, which is consistent with Farouki's conjecture about this problem~\cite[\S 3]{farouki}. 

However, by using Zeilberger's algorithm, we give recurrence relations for the coefficient functions for rows and columns of the change of basis matrix. The techniques also yield a hypergeometric function for the elements of the change of basis matrix. 
This expression is more general and simpler than a previous one~\cite{farouki} which is only defined for the elements of the upper-skew triangle. 

By using this hypergeometric function, we show that the coefficient function for columns is equivalent to the Lagrange interpolation polynomial for the column elements.

In Part III, we defined three methods for defining polynomial bases from others: truncation, alternation and superposition. We showed that truncation and matrix inversion are commutative. This can be used to reduce the complexity of finding the inverse matrix of a truncated change of basis matrix.

We introduced alternating matrices that are change of basis matrices between alternating bases. These have zero elements in alternate positions. Since alternation is preserved by matrix multiplication and inversion, we have, as a corollary of Theorem~\ref{tri-inv}, that change of basis matrices between lower- or upper-triangular alternating matrices are connected sub-groupoids.

We also showed that that alternation and superposition are converse operations. Alternation and superposition produce polynomial bases without definite parity from ones that at have definite parity. The inverse of a change of basis matrix where the domain or range is an alternating basis and both are descending or ascending bases, can be found from the inverse coefficient function of the alternate non-zero elements. 

Truncation can be applied to change of basis matrices formed using alternation and superposition. 

Using category theory, we showed that alternating matrices, and combinations of alternating and upper- and lower-triangular matrices are full subcategories of the small category of the change of basis groupoid. Truncation is a covariant functor. Superposition enables bases to be defined from alternating ones, but it is not a functor.

\section*{Acknowledgment}
I am grateful to the College of Engineering \& Computer Science at The Australian National University for research support.

\begin{appendix}
\section{Coefficient Functions by Column and Row}
\label{app1}

These are the first coefficient functions by column for the change of basis matrix from shifted Legendre polynomials to Bernstein polynomials where $j: 0 \leq j \leq n$ is the column index and $i: 0 \leq i \leq n$ is the row index.

\subsection{Columns}

When $j > 1$, the coefficient functions can be found by applying the recurrence of equation~(\ref{reccol}) found from Zeilberger's algorithm:

\begin{align*}
&(1+j)(2+j+n)\mbox{SUM}[j] - (3+2j)(2i -n) \mbox{SUM}[1+j] \\
&- (2+j) (1+j -n)\mbox{SUM}[2+j] = 0.
\end{align*} 

In the list below, $\alpha(n, j, n-i) = \mbox{SUM}[j]$, 
\begin{itemize}
\item $\alpha(n, 0, n-i) = 1$. This is from equation~(\ref{pfq0}).
\item $\alpha(n, 1, n-i) = \frac{2i}n - 1$. This is from equation~(\ref{pfq0}).
\item $\alpha(n, 2, n-i) = \frac{n^2 - (6i + 1) n + 6i^2}{n(n-1}$
\item $\alpha(n, 3, n-i) = \frac{(n-2i)(n^2 - (10i+3)n +10 i^2 + 2)}{n(n-1)(n-2)}$
\item $\alpha(n,4,n-i) = \frac{n^4 - (20i +6)n^3 + (90 i^2 + 30i + 11)n^2 - (140 i^3 + 30 i^2 + 50 i+6)n +(70i^4 + 50i^2) }{n(n-1)(n-2)(n-3)}$
\end{itemize}

We can also apply the recurrence to find $ \mbox{SUM}[j]$ where $j \leq n-2$. 
\begin{itemize}
\item $\alpha(n, n, n-i) = (-1)^{n+i} {n \choose i}$. This follows from Li and Zhang~\cite[equation (6)]{li} and Farouki~\cite[equation (13)]{farouki2000}.
\item $\alpha(n, n-1, n-i) = (-1)^{n+i}{n \choose i} \frac{2i-n}n $. This is from equation~(\ref{alpha-pen}) of Lemma~\ref{lemm-pen}.
\item $\alpha(n, n-2,n-i) = (-1)^{n+i} \frac{{n \choose i}}{n^2(n-1)}(n^3 - (4i +1)n^2 + 2i(2i +1)n - 2i^2)$
\item $\alpha(n, n-3, n-i) = (-1)^{n+i-1} \frac{{n \choose i}}{n^2(n-1)(n-2)}(n^3 - (4i +3)n^2 + (2i+1)(2i +2)n - 6i^2)(n-2i)$
\end{itemize}

\subsection{Rows}
The following coefficient functions satisfy the recurrence of equation~(\ref{recrow}) found from Zeilberger's algorithm.
\begin{align*} 
&-(1+i)(i-n)\mbox{SUM}[i] + (1 + 4i + 2i^2 +j +j^2 - 3n - 2i n) \mbox{SUM}[1+i] \nonumber\\
&- (2+i) (1+i -n)\mbox{SUM}[2+i] = 0.
\end{align*} 

In the list below, $\alpha(n, j, n-i) = \mbox{SUM}[i]$. These equations follow from equation~(\ref{pfq0}).
\begin{itemize}
\item $\alpha(n, j, n) = (-1)^j$.
\item $\alpha(n, j, n-1) = (-1)^j( 1- \frac{j(j+1)}n)$.
\item $\alpha(n, j, n-2) = (-1)^j( 1- \frac{2j(j+1)}n + \frac{(1-j)j(1+j)(2+j)}{2(1-n)n})$.
\item $\alpha(n, j, n-3) = (-1)^j( 1- \frac{3j(j+1)}n + \frac{3(1-j)j(1+j)(2+j)}{2(1-n)n} -$ \\
$\frac{(1-j)(2-j)j(1+j)(2+j)(3+j)}{6(1-n)(2-n)n}).$
\end{itemize}

From Farouki~\cite{farouki2000}, the rows of the change of basis matrix have the property
\[
\noindent
m_{n-i, j} = (-1)^j m_{i, j} \:\: \mbox{for $0 \leq j \leq n$ and $0 \leq i \leq \lfloor \frac{n}2 \rfloor$}
\]
so that $\alpha(n, j, i) = (-1)^j \alpha(n, j, n-i)$ where $0 \leq j \leq n$ and $0 \leq i \leq \lfloor \frac{n}2 \rfloor$.

\section{Proof of the Induction Step of Theorem~\ref{invdesa}} \label{appfor4}
The right side of the equation is
\begin{align*}
=& -\frac{\sum\limits_{l=0}^{h-1} 
{{n-l} \choose {n -h}}{{n -h}\choose m} (-1)^{n -h - m} \; (-1)^{n- m -l} \frac{{n \choose l}}{{n \choose m}}}{{{n-h} \choose {n-h}}{{n-h}\choose m} (-1)^{n -h - m}}\\
=& -\sum\limits_{l=0}^{h-1} 
{{n-l} \choose {n -h}} \; (-1)^{n- m -l} \frac{{n \choose l}}{{n \choose m}}\\
=& -\frac{ (-1)^{n- m}}{ {n \choose m}} \sum\limits_{l=0}^{h-1} 
{{n-l} \choose {n -h}} \; (-1)^{-l} {n \choose l} \\
=& -\frac{ (-1)^{n- m}}{ {n \choose m}} \frac{n!}{(n-h)!} \sum\limits_{l=0}^{h-1} 
\frac{(-1)^{l}}{l! (h-l)!}
\end{align*}

We use the identity from Gradshteyn and Ryzhik~\cite[equation 0.151.4]{zwillinger}:
\[
\sum\limits_{k=0}^m (-1)^k {n \choose k} = (-1)^m {{n-1} \choose m}\:\: \mbox{ where $n \geq 1$}
\] so that
\begin{align*}
=& -\frac{ (-1)^{n- m}}{ {n \choose m}} \frac{n!}{(n-h)!} \frac{(-1)^{1-h}}{h!}\\
=& (-1)^{n- m - h} \frac{{n \choose h}}{ {n \choose m}}, \: \mbox{ as required.}\\
\end{align*}

\section{Proof of the Induction Step of Theorem~\ref{RMD}} \label{appfor5}
The induction hypothesis is that the result holds for all $l: 0 \leq l < h \leq \frac{n - m}2$. We need to show that
\[
\frac{n - 2h + 1}{n - h + 1}\frac{{n \choose h}}{{n \choose {\frac{n-m}2}}} = -\frac{\sum\limits_{l=0}^{h-1} \beta_1(n-2l, m, h-l) \; \beta(n,m,l)}{\beta_1(n-2h, m, 0)}
\] where $\beta_1$ is defined in equation~(\ref{MR}). 

We shall use the following identity from Gradshteyn and Rizhik~\cite[equation 0.160.2]{zwillinger}:
\[
\sum\limits_{r=0}^n {n \choose r}(-1)^r \frac{\Gamma(r + b)}{\Gamma(r+a)} = \frac{B(n + a - b, b)}{\Gamma(a-b)}.
\] where $B$ is Euler's Beta function. We also use the property
\[
B(x+1, y) = \frac{x}{x+y} B(x, y).
\]

The right side of the equation is
\begin{align*}
& -\frac{\sum\limits_{l=0}^{h-1} {{n- l -h} \choose {h-l}}{{n -2h} \choose {\frac{n - m}{2} - h }} (-1)^{h-l} \; \left(\frac{n - 2l + 1}{n - l + 1}\frac{{n \choose l}}{{n \choose {\frac{n-m}2}}}\right)}{
{{n-2h} \choose {\frac{n-2h-m}{2}}} }\\
=& -\sum\limits_{l=0}^{h-1} {{n- l -h} \choose {h-l}} (-1)^{h-l} \; \left(\frac{n - 2l + 1}{n - l + 1}{n \choose l}\right)\\
\end{align*}
Subtracting the left side of the equation $\frac{n - 2h + 1}{n - h + 1}\frac{{n \choose h}}{{n \choose {\frac{n-m}2}}}$, gives
\begin{align*}
0 =& \sum\limits_{l=0}^{h} {{n- l -h} \choose {h-l}} (-1)^{h-l} \; \left(\frac{n - 2l + 1}{n - l + 1}{n \choose l}\right)\\
=& \sum\limits_{l=0}^{h} {{n- l -h} \choose {h-l}} (-1)^{l} \; \left(\frac{n - 2l + 1}{(n - l + 1)!l!}\right)\\
=& \sum\limits_{l=0}^{h} {h \choose l} \frac{(n-l-h)!}{(n-2h)!} (-1)^{l} \; \left(\frac{n - 2l + 1}{(n - l + 1)!}\right)\\
=& \sum\limits_{l=0}^{h} {h \choose l} (n-l-h)! (-1)^{l} \; \left(\frac{n - 2l + 1}{(n - l + 1)!}\right)\\
=& \sum\limits_{l=0}^{h} \frac{{h \choose l}}{{{n-l+1} \choose {h+1}} } (-1)^{l} \; (n - 2l + 1)\\
=& (n+1) \sum\limits_{l=0}^{h} \frac{{h \choose l}}{{{n-l+1} \choose {h+1}} } (-1)^{l} - 2 \sum\limits_{l=0}^{h} l \frac{{h \choose l}}{{{n-l+1} \choose {h+1}} } (-1)^{l} \\
=& (n+1) \sum\limits_{l=0}^{h} \frac{{h \choose l}}{{{n-l+1} \choose {h+1}} } (-1)^{l} - 2 h (-1)^h\sum\limits_{l=0}^{h-1} \frac{{{h-1} \choose l}}{{{n-l} \choose {h+1}} } (-1)^{l} \\
=& (n+1) (h+1)! (-1)^h \sum\limits_{l=0}^{h} {h \choose l} (-1)^{l} \frac{\Gamma(l+n-2h+1)}{\Gamma(l+n-h+2)} \\
& - 2 h (h+1)! (-1)^h \sum\limits_{l=0}^{h-1} {{h-1} \choose l} (-1)^{l} \frac{\Gamma(l+ n-2h+1)}{\Gamma(l+n-h+2)} \\
=& (n+1) (h+1)! (-1)^h \frac{B(2h +1, n-2h+1)}{\Gamma(h+1)} - \\
&2 h (h+1)! (-1)^h \frac{B(2h, n-2h+1)}{\Gamma(h+1)} \\
=& B(2h, n-2h+1) \left( (n+1) (h+1) (-1)^h \frac{2h}{n+1} -2 h (h+1) (-1)^h \right)\\
=&0.
\end{align*}

\section{Another Proof of Theorem~\ref{RMD}}

We use a formula that relates the Zernike Radial polynomials to Jacobi polynomials~\cite{born}:
\begin{equation} \label{RPeq}
R_n^m(x) = {(-1)}^{\frac{(n-m)}2} x^m P_{\frac{(n-m)}2}^{(m , 0)} (1 - 2 x^2)
\end{equation} where $n$ and $m$ are non-negative integers and have the same parity. We use equation~\ref{RPeq} to give the inverse mapping.

\begin{theorem}
The coefficient function for the mapping
\[ 
\{x^m, x^{m+2}, \ldots, x^n\} \rightarrow \{ R_m^m(x), R_{m+2}^m, \ldots, R_n^m(x)\}
\]
where $n \geq m \geq 0$ and $m$ and $n$ have the same parity is
\begin{equation}
\beta(n, m, k) = (-1)^{k} 2^{-v} \sum_{l=0}^{k} {v \choose l} (-1)^l \beta_1(n - 2l, m, k-l) \label{b3}
\end{equation} where $v = \frac{n-m}2$, $0 \leq k \leq v$, and
\begin{align}
\beta_1(n, m, k) =& (n - 2k + 1)(m+2)_{v-k-1} \nonumber\\
& \left( \sum_{l = 0}^{k} 2^{v-l} {v \choose {v-l}} {(-1)}^l 
\frac{(v+1-k)_{k-l}(k-l+1)_{v-k}}{(m + 2)_{v-l}(v-l+m+2)_{v-k}} \right).
\end{align}
\end{theorem}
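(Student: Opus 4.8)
The plan is to bypass the inductive/Gosper argument of Theorem~\ref{RMD} and instead read off the coefficient function directly from the Jacobi representation (\ref{RPeq}). Write $v=\frac{n-m}2$ and substitute $y=1-2x^2$, so that (\ref{RPeq}) gives $R_{n-2k}^m(x)=(-1)^{v-k}x^m P_{v-k}^{(m,0)}(y)$ for $0\le k\le v$. The expansion we must justify, $x^n=\sum_{k=0}^{v}\beta(n,m,k)R_{n-2k}^m(x)$, then factors out the common $x^m$ and, using $x^{2v}=\big(\tfrac{1-y}{2}\big)^v=2^{-v}(1-y)^v$, reduces to the single problem of expanding $2^{-v}(1-y)^v$ in the Jacobi basis $\{P_j^{(m,0)}(y):0\le j\le v\}$. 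Equating coefficients of $P_{v-k}^{(m,0)}(y)$ on the two sides shows that $(-1)^{v-k}\beta(n,m,k)$ is exactly the coefficient of $P_{v-k}^{(m,0)}(y)$ in $2^{-v}(1-y)^v$. Because the shared factor $x^m$ carries all the parity and the remaining polynomials are even in $x$, this lives inside the definite-parity setting of Figure~\ref{MDesP}, so every matrix involved is upper triangular and the relevant composition rule is (\ref{DDD-comp}).

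First I would factor this expansion through the intermediate basis of powers of $y$. Expanding $(1-y)^v=\sum_{l}\binom{v}{l}(-1)^{v-l}y^{v-l}$ and letting $\beta_1(n,m,k)$ denote the coefficient of $P_{v-k}^{(m,0)}(y)$ in the single power $y^{v}$, the argument shift $n\mapsto n-2l$ lowers the internal degree from $v$ to $v-l$, so $\beta_1(n-2l,m,k-l)$ is precisely the coefficient of $P_{v-k}^{(m,0)}(y)$ in $y^{\,v-l}$. Substituting the binomial expansion into the Jacobi expansion and collecting terms gives $(-1)^{v-k}\beta(n,m,k)=2^{-v}\sum_{l}\binom{v}{l}(-1)^{v-l}\beta_1(n-2l,m,k-l)$; since $(-1)^{v-k}(-1)^{v-l}=(-1)^{k+l}=(-1)^k(-1)^l$, this is exactly equation (\ref{b3}). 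This step is just an instance of the definite-parity composition (\ref{DDD-comp}), with the binomial weights $2^{-v}\binom{v}{l}$ playing the role of the first (power-to-power) change of basis and $\beta_1$ the role of the second (power-to-Jacobi) change of basis.

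It then remains to establish the explicit closed form of $\beta_1$, namely the connection coefficients expressing $y^v$ in the Jacobi polynomials $P_j^{(m,0)}$. I would obtain these by inverting the explicit power-series form of $P_j^{(m,0)}$, which produces a terminating hypergeometric sum; rewriting it in Pochhammer notation yields the nested expression displayed in the statement. A useful consistency check is the case $k=0$: the formula collapses to $\beta_1(n,m,0)=2^v/\binom{n}{v}$, the reciprocal of the leading coefficient of $P_v^{(m,0)}$, and (\ref{b3}) then returns $\beta(n,m,0)=1/\binom{n}{v}$, in agreement with Theorem~\ref{RMD}. Correctness of the whole formula follows because the change-of-basis matrix from $s$ to $v$ is unique (its well-definedness and invertibility having been established earlier), so any expression producing the correct Jacobi expansion must coincide with the $\beta(n,m,k)$ of Theorem~\ref{RMD}.

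The hard part will be the derivation and simplification of $\beta_1$: selecting the correct power-to-Jacobi inversion identity for the parameters $(m,0)$, converting its hypergeometric form into the doubly-indexed Pochhammer sum exactly as written, and keeping the sign factors $(-1)^{v-k}$, $(-1)^{v-l}$ from (\ref{RPeq}) and the binomial expansion aligned with the shifted arguments $n-2l$ demanded by (\ref{DDD-comp}). Once $\beta_1$ is pinned down, the composition and the final identification with Theorem~\ref{RMD} are routine.
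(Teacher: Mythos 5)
Your proposal follows essentially the same route as the paper's own proof (the appendix ``Another Proof of Theorem~\ref{RMD}''): both reduce the problem via the Jacobi connection (\ref{RPeq}) to expanding $2^{-v}(1-2x^2\mapsto y)$ powers in the basis $\{P_j^{(m,0)}(y)\}$, factor through the intermediate basis of powers of $1-2x^2$ with your binomial weights $2^{-v}\binom{v}{l}(-1)^{v-l}$ coinciding exactly with the paper's linear-shift coefficient function $\beta_2$, and compose with the monomial-to-Jacobi coefficients to obtain (\ref{b3}) after the same sign bookkeeping. The only real difference is that the step you flag as the ``hard part'' --- deriving the closed form of $\beta_1$ by inverting the power series of $P_j^{(m,0)}$ --- is not derived in the paper either: it is simply imported from prior work (equation (19) of \cite{wolfram2108}), so a citation suffices there and your plan to re-derive it is unnecessary extra work rather than a gap in the architecture.
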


\begin{proof}
This mapping has the property
\begin{align}
x^{n} =& \sum_{k=0}^v \beta(n, m, k) R_{n-2k}^m(x).
\end{align}

From equation~(\ref{RPeq}), 
the change of basis mapping from $\{1, x^2, x^4, \ldots, x^{n-m}\}$ to $\{1, (1-2 x^2), (1-2x^2)^2, \ldots (1-2x^2)^v\}$ has the coefficient function given by \cite[equation~(12)]{wolfram2107}. It is a linear shift with $y = x^2$, $c = -2$ and $d = 1$. This gives
$\alpha_2(v, k) = {v \choose k} c^{-v}(-d)^k$ for the mapping from $\{1, y, y^2, \ldots, y^{\frac{n-m}2}\}$ to $\{1, (1-2y), (1-2y)^2, \ldots, (1-2y)^{\frac{n-m}2}\}$.

This simplifies to
\begin{equation} \label{b2}
\beta_2(n, m, k) = {v \choose k} \frac{(-1)^{v-k}}{2^v}.
\end{equation} where $v = \frac{n-m}2$ and $0 \leq k \leq v$.

Equation~(\ref{alpha-zP}) below is the coefficient function for the mapping from basis of the monomials to the Jacobi polynomials from Wolfram~\cite[equation(19)]{wolfram2108}. In this application of the equation, $\alpha = m$, $\beta = 0$ and $v = \frac{n-m}2$. We have
\begin{align} \label{alpha-zP}
\alpha(v, k) =& (n - 2k + 1)(m+2)_{v-k-1} \nonumber\\
& \left( \sum_{l = 0}^{k} 2^{v-l} {v \choose {v-l}} {(-1)}^l 
\frac{(v+1-k)_{k-l}(k-l+1)_{v-k}}{(m + 2)_{v-l}(v-l+m+2)_{v-k}} \right)
\end{align} where $0 \leq k \leq v$. This gives
\[
x^v = \sum_{k=0}^v \alpha(v, k) P_{v-k}^{(m, 0)}(x)
\]

From equation~(\ref{RPeq}), we have
\begin{align} \label{MpR}
(1 -2 x^2)^v =& \sum_{k=0}^v (-1)^{k-v} x^{-m} \alpha(v, k) R_{2(v-k) + m}^m(x) \nonumber\\
=& x^{-m} \sum_{k=0}^v (-1)^{v-k} \beta_1(n, m, k) R_{n-2k}^m(x)
\end{align} where $\beta_1(n, m, k) = \alpha(v, k)$. The function name is changed because the polynomials $R_{n-2k}^m(x)$ have definite parity following the substitution above of $1 -2x^2$ for $x$.

In general, the coefficient function from the composition of $\beta_1$ and $\beta_2$ is given by
\[
\beta_3(n, m, k)= \sum_{l=0}^{k} \beta_1(n - 2l, m, k-l) \beta_2(n, m , l)
\] where $0 \leq k \leq v$, e.g., ~\cite[\S 5.1]{wolfram2107}. Applying this gives
\begin{equation} 
\beta_3(n, m, k)= (-1)^{v} 2^{-v} \sum_{l=0}^{k} {v \choose l} (-1)^l \beta_1(n - 2 l, m, k-l)
\end{equation} where $0 \leq k \leq v$.

We have 
\begin{align}
x^{2v} =& x^{-m} \sum_{k=0}^v (-1)^{k-v} \beta_3(n, m, k) R_{n-2k}^m(x) \mbox{, i.e.,}\nonumber\\
x^{n} =& \sum_{k=0}^v (-1)^{k-v} \beta_3(n, m, k) R_{n-2k}^m(x). \label{b3eq}
\end{align}

This gives,
\begin{align}
\beta(n, m, k) =& (-1)^{k-v} \beta_3(n, m,k) \nonumber \\
=& (-1)^{k} 2^{-v} \sum_{l=0}^{k} {v \choose l} (-1)^l \beta_1(n - 2l, m, k-l) 
\end{align}
and

\begin{align} \nonumber
x^{n} =& \sum_{k=0}^v \beta(n, m, k) R_{n-2k}^m(x).
\end{align} on substitution into equation (\ref{b3eq}), as required.
\end{proof}

\section{Another Proof of Theorem~\ref{beta-RM}} \label{appfor6}
We have
\begin{align*}
&\sum\limits_{l=0}^{h} \beta_1(n, m+2l, \frac{n-m}2 -h) \; \beta(n,m,v-l)\\
=&\sum\limits_{l=0}^{h} {{n-(\frac{n-m}2 -h)} \choose (\frac{n-m}2 -h)}{{n-2(\frac{n-m}2 -h)} \choose {\frac{n-(m+2l)}{2} - (\frac{n-m}2 -h)}} (-1)^{(\frac{n-m}2 -h)} (-1)^{\frac{n-m}2 + l}\\
& \frac{(m+1)_{l-1} (m + 2l)}{l! {{\frac{n+m}2} \choose {\frac{n-m}2}}}\\
= &\frac{(-1)^h}{{{\frac{n+m}2} \choose {\frac{n-m}2}}} {{\frac{n+m}2 +h} \choose {\frac{n-m}2 -h}} \sum\limits_{l=0}^{h}{{m+2h} \choose {h - l}} (-1)^{l} \frac{(m+1)_{l-1} (m + 2l)}{l! }\\
=&\frac{(-1)^h}{{{\frac{n+m}2} \choose {\frac{n-m}2}}} {{\frac{n+m}2 +h} \choose {\frac{n-m}2 -h}} \sum\limits_{l=0}^{h}{{m+2h} \choose {h - l}} (-1)^{l} \frac1{m}{{m+l-1} \choose l} (m + 2l)\\
= &\frac{(-1)^h}{{{\frac{n+m}2} \choose {\frac{n-m}2}}} {{\frac{n+m}2 +h} \choose {\frac{n-m}2 -h}} \sum\limits_{l=0}^{h}{{m+2h} \choose {h - l}} (-1)^{l} \left ({{m+l-1} \choose l} + 2{{m+l-1} \choose {l-1}} \right)\\
=&\frac{(-1)^h}{{{\frac{n+m}2} \choose {\frac{n-m}2}}} {{\frac{n+m}2 +h} \choose {\frac{n-m}2 -h}} \sum\limits_{l=0}^{h}{{m+2h} \choose {h - l}} (-1)^{l} \left ({{m+l} \choose l} + {{m+l-1} \choose {l-1}} \right)\\
\end{align*}

The sum \[
\sum\limits_{l=0}^{h}{{m+2h} \choose {h - l}} (-1)^{l} {{m+l} \choose l}
\]
\begin{align*}
=& \sum\limits_{l=0}^{h} (-1)^{l} \frac{(m+2h)!}{(h-l)! (m+h+l)!} \frac{(m+l)!}{m! l!} \\
=& \frac{(m+2h)!}{h! m!} \sum\limits_{l=0}^{h} (-1)^{l} {{h} \choose l} \frac{\Gamma(l + m +1)}{\Gamma(l + m + h + 1)}\\
=& \frac{(m+2h)!}{h! m!} \frac{B(2h, m+1)}{\Gamma(h)}, \mbox{from~\cite[equation 0.160.2]{zwillinger}}\\
=& \frac{(m+2h)!}{h! m!} \frac{(2h -1)! m!}{(2h+m)! (h-1)!}\\
=& {{2h -1} \choose h}.
\end{align*}

The sum 
\[
\sum\limits_{l=0}^{h}{{m+2h} \choose {h - l}} (-1)^{l} {{m+l-1} \choose {l-1}}
\]
\begin{align*}
=& \sum\limits_{l=1}^{h} (-1)^{l} \frac{(m+2h)!}{(h-l)! (m+h+l)!} \frac{(m+l-1)!}{m! (l-1)!} \\
=& -\frac{(m+2h)!}{m! (h-1)!} \sum\limits_{k=0}^{h-1} (-1)^{k} {{h-1} \choose k} \frac{\Gamma(k+m+1)}{\Gamma(k+m+h+2)} \\
=& -\frac{(m+2h)!}{m! (h-1)!} \frac{B(2h, m+1)}{\Gamma(h+1)}, \mbox{from~\cite[equation 0.160.2]{zwillinger}}\\\\
=& -\frac{(m+2h)!}{m! (h-1)!} \frac{(2h -1)! m!}{(2h+m)! h!}\\
=& - {{2h -1} \choose h}.
\end{align*}

Hence,
\[
\sum\limits_{l=0}^{h}{{m+2h} \choose {h - l}} (-1)^{l} \left ({{m+l} \choose l} + {{m+l-1} \choose {l-1}} \right) = 0
\] and 
\[
\sum\limits_{l=0}^{h} \beta_1(n, m+2l, \frac{n-m}2 -h) \; \beta(n,m,v-l) = 0.
\]

\end{appendix}

\end{document}